\newtheorem*{Theorem}{Theorem}
\newtheorem{theorem}[subsection]{Theorem}
\newtheorem{proposition}[subsection]{Proposition}
\newtheorem{lemma}[subsection]{Lemma}
\newtheorem{corollary}[subsection]{Corollary}
\theoremstyle{definition}
\newtheorem{example}[subsection]{Example}
\newtheorem{definition}[subsection]{Definition}
\theoremstyle{remark}
\newtheorem{remark}[subsection]{Remark}
\newtheorem{claim}{Claim}
\numberwithin{equation}{subsection}
\newcommand{\lra}{\longrightarrow}
\newcommand{\xra}{\xrightarrow}
\newcommand{\bbZ}{\mathbb{Z}}
\newcommand{\fa}{\mathfrak{a}}
\newcommand{\fm}{\mathfrak{m}}
\newcommand{\fn}{\mathfrak{n}}
\newcommand{\fp}{\mathfrak{p}}
\newcommand{\fd}{\operatorname{flat\,dim}}
\newcommand{\hh}[1]{\operatorname{H}(#1)}
\newcommand{\HH}[2]{\operatorname{H}_{#1}(#2)}
\newcommand{\Hom}{\operatorname{Hom}}
\newcommand{\lch}[3]{\operatorname{H}_{#1}^{#2}(#3)}
\newcommand{\lotimes}{\otimes^{\mathbf L}}
\newcommand{\perf}{\operatorname{perf}}
\newcommand{\spa}{\operatorname{Spa}}
\newcommand{\Spec}{\operatorname{Spec}}
\newcommand{\Tor}{\operatorname{Tor}}
\newcommand{\ov}{\overline }
\newcommand{\ul}{\underline}
\begin{document}
\title{Regular rings and perfect(oid) algebras}
\dedicatory{Happy Birthday Gennady!}

\author[Bhatt]{Bhargav Bhatt}
\address{4832 East Hall, Department of Mathematics, University of Michigan, MI 48109, U.S.A. }
\email{bhargav.bhatt@gmail.com}

\author[Iyengar]{Srikanth B. Iyengar}
\address{Department of Mathematics, University of Utah, Salt Lake City, UT 84112, U.S.A.}
\email{iyengar@math.utah.edu}

\author[Ma]{Linquan Ma}
\address{Department of Mathematics, Purdue University, 150 N. University street, IN 47907}
\email{ma326@purdue.edu}

\thanks{During the preparation of this work, BB was partially supported by a Packard fellowship. We also thank the National Science Foundation for supporting us through grants DMS-1501461 (BB), DMS-1700985 (SBI), and DMS-1836867/1600198 and DMS-1252860/1501102. (LM)}

\date{\today}

\keywords{absolute integral closure, flat dimension, perfect closure, perfect ring, perfectoid ring, regular  ring}

\subjclass[2010]{13D05 (primary); 13A35, 13D22,  13H05  (secondary)}

\begin{abstract}
We prove a $p$-adic analog of Kunz's theorem: a $p$-adically complete noetherian ring is regular exactly when it admits a faithfully flat map to a perfectoid ring. This result is deduced from a more precise statement on detecting finiteness of projective dimension of finitely generated modules over noetherian rings via maps to perfectoid rings. We also establish a version of the $p$-adic Kunz's theorem where the flatness hypothesis is relaxed to almost flatness.
\end{abstract}


\maketitle

\section{Introduction}
This paper explores some homological properties of perfect(oid) algebras over commutative noetherian rings.  A commutative ring of positive characteristic $p$ is called perfect if its Frobenius endomorphism is an isomorphism. Perfectoid rings are generalizations of perfect rings to mixed characteristic (Definition~\ref{def:perfectoid}). Their most important features for our work are: if $A$ is a perfectoid ring, then $\sqrt{pA}$ is a flat ideal, $A/\sqrt{pA}$ is a perfect ring, and finitely generated radical ideals in $A$ containing $p$ have finite flat dimension  (Lemma~\ref{lem:PerfdFlatDim}).

One of our main results is that over a noetherian local ring $R$, any perfectoid $R$-algebra $A$ with $\fm A\ne A$  detects finiteness of homological dimension of $R$-modules. More precisely,  given such an $A$, if a finitely generated $R$-module $M$ satisfies $\Tor^{R}_{j}(A,M)=0$ for $j\gg 0$, then $M$ has a finite free resolution by $R$-modules (Theorem~\ref{thm:perfectoid}). The crucial property of $A$ that is responsible for this phenomenon is isolated in Theorem~\ref{thm:smoothable}, which identifies a large class of modules that can detect finiteness of homological dimension over local rings.

As a consequence, we obtain a mixed characteristic generalization of Kunz's theorem, resolving a question from \cite[Remark~5.5]{Bhatt:2016a}; see also \cite[pp.~6]{Andre:2018a}. Recall that Kunz's theorem asserts that a noetherian ring $R$ of characteristic $p$ is regular if and only if the Frobenius map $R \to R$ is flat. One can reformulate this result as the following assertion: such an $R$ is regular exactly when there exists a faithfully flat map $R \to A$ with $A$ perfect. Our $p$-adic generalization is the following:

\begin{Theorem}[see Theorem~\ref{thm:padicKunz}]
Let $R$ be a noetherian ring such that $p$ lies in the Jacobson radical of $R$ (for example, $R$ could be $p$-adically complete). Then $R$ is regular if and only if there exists a faithfully flat map $R \to A$ with $A$ perfectoid.
\end{Theorem}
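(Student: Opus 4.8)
The plan is to establish the two implications separately, each time reducing to a local ring and invoking Theorem~\ref{thm:perfectoid}.

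\emph{Sufficiency.} Suppose $R\to A$ is faithfully flat with $A$ perfectoid. Since $p$ lies in the Jacobson radical, every maximal ideal $\fm$ of $R$ contains $p$, and regularity of the noetherian ring $R$ can be tested at its maximal ideals; so fix such an $\fm$. Faithful flatness makes $\Spec A\to\Spec R$ surjective, so some prime $\fq$ of $A$ lies over $\fm$; then $R_\fm\to A_\fq$ is a flat local homomorphism and $p\in\fq$. Let $B$ be the $p$-adic completion of $A_\fq$. I would use here that a $p$-adically completed localization of a perfectoid ring is again perfectoid (it is a $p$-completed filtered colimit of $p$-completions of localizations $A[1/f]$, each perfectoid), and that $p$-adic completion preserves flatness over the noetherian ring $R_\fm$; thus $B$ is perfectoid and $R_\fm\to B$ is flat and local, hence faithfully flat, so $\fm B\ne B$. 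Since $B$ is $R_\fm$-flat, $\Tor^{R_\fm}_j(B,M)=0$ for all $j>0$ and all finitely generated $M$, so Theorem~\ref{thm:perfectoid} applied to the perfectoid $R_\fm$-algebra $B$ shows every finitely generated $R_\fm$-module has a finite free resolution. Hence $R_\fm$ has finite global dimension and is regular by Auslander--Buchsbaum--Serre; as this holds for every $\fm$, the ring $R$ is regular.

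\emph{Necessity.} Suppose $R$ is regular. Regularity is local and every maximal ideal of $R$ contains $p$, so after passing to $R_\fm$ and then to its $p$-adic completion $\widehat{R_\fm}$ — which is faithfully flat over $R_\fm$ and again regular, since the closed fibre of $R_\fm\to\widehat{R_\fm}$ is the residue field and a flat local extension of a regular local ring with regular closed fibre is regular — it suffices to build, for each $\fm$, a faithfully flat perfectoid algebra over the complete regular local ring $\widehat{R_\fm}$. Taking the $p$-adic completion of the product of these over all $\fm$ then yields a perfectoid ring that is faithfully flat over $R$ (the $\fm$-component witnesses $\fm B\ne B$, using $p\in\fm$). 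So I may assume $R$ is a complete regular local ring with $p$ in its maximal ideal.

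By Cohen's structure theorem, a choice of coefficient ring and regular system of parameters exhibits $R$ as a module-finite flat extension of a complete regular local subring $R_0\cong\Lambda[[x_1,\dots,x_n]]$, where $\Lambda$ is either a field of characteristic $p$ or an unramified complete discrete valuation ring of mixed characteristic $(0,p)$. For such an $R_0$ there is an explicit faithfully flat perfectoid $R_0$-algebra $A_0$: adjoin compatible systems of $p$-power roots of a $p$-basis of the residue field, of $x_1,\dots,x_n$, and (in mixed characteristic) of $p$, then $p$-adically complete; faithful flatness holds because each finite stage is finite free, and perfectoidness is a direct check. It then remains to promote $A_0$ across the module-finite flat extension $R_0\to R$: one forms $A_0\otimes_{R_0}R$, which is finite flat over $A_0$ and faithfully flat over $R$, and replaces it by a perfectoid ring receiving it — via the perfectoidization of Andr\'e and Bhatt--Scholze — after first enlarging $A_0$ by adjoining further $p$-power roots along the ramification of $R_0\to R$, so that the extension becomes unramified enough for the perfectoidization to remain \emph{honestly} (not merely almost) faithfully flat over $R$. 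This last point — preserving genuine faithful flatness over $R$ when crossing a possibly ramified finite flat extension — is the step I expect to be the main obstacle; it is precisely the difficulty that is relaxed in the almost-flat variant recorded in the abstract, whereas Cohen's theorem and the construction over a power-series base are routine.
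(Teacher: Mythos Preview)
Your sufficiency argument is essentially correct, though it takes a detour: localising $A$ at a prime $\fq$ over $\fm$ and then $p$-adically completing does produce a perfectoid $R_\fm$-algebra, but the claim that $p$-adic completion preserves flatness over a noetherian base, while true, is not as routine as you make it sound. The paper avoids this: since a localisation of a perfect ring is perfect and localisation preserves flat ideals, $A_\fm$ is already perfect modulo a flat ideal, and by Remark~\ref{rem:perfectoid} that is all Theorem~\ref{thm:perfectoid} needs. One then uses $\Tor^{R_\fm}_i(A_\fm,k(\fm))\cong\Tor^R_i(A,k(\fm))=0$ directly, without ever leaving the category of honest modules.

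The necessity direction has a genuine gap, namely the step you yourself flag as the ``main obstacle'': promoting a faithfully flat perfectoid cover across a possibly ramified finite flat extension $R_0\to R$ while retaining \emph{honest} (not merely almost) faithful flatness. You do not carry this out, and it is a real difficulty --- perfectoidization across such extensions typically only yields almost faithful flatness. The paper sidesteps the problem entirely by using a sharper form of the Cohen structure theorem. After a faithfully flat gonflement to make the residue field perfect, a complete regular local ring $R$ of mixed characteristic with perfect residue field $k$ is not merely module-finite over a power-series ring: it \emph{equals} $W\llbracket x_2,\dots,x_d\rrbracket$ in the unramified case, or $W\llbracket x_1,\dots,x_d\rrbracket/(p-f)$ with $f\in(x_1,\dots,x_d)^2$ in the ramified case, where $W=W(k)$. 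In either presentation there is an explicit faithfully flat perfectoid $R$-algebra obtained by adjoining $p$-power roots of $p$ and of the $x_i$ and $p$-completing (Example~\ref{ex:Perfectoid}\,(4),\,(5)); no finite extension is crossed and no perfectoidization is invoked. Globalisation is then the bare product $\prod_\fm A(\fm)$, which is already perfectoid (Example~\ref{ex:Perfectoid}\,(8)) and faithfully flat because products of flat modules over a noetherian ring are flat and every closed point is hit by construction; your extra $p$-adic completion of the product is not needed.
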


Two algebras are of special interest: the absolute integral closure, $R^{+}$, of a domain $R$, and the perfection, $R_{\perf}$, of a local ring $R$ of positive characteristic. We prove that if $R$ is an excellent local domain of  positive characteristic and $\Tor^{R}_{j}(R^{+},k)=0$ or $\Tor^{R}_{j}(R_{\perf},k)=0$ for \emph{some} $j\ge 1$, then $R$ is regular; see Theorem~\ref{thm:regularity}, which contains also a statement about $R^{+}$ for local rings of mixed characteristic. A key input in its proof is that systems of parameters for $R$ are weakly proregular on $R^{+}$ and on $R_{\perf}$ (Lemma~\ref{lem:proregular}).

Over a perfectoid ring $A$ one has often to consider modules that are almost zero, meaning that they are annihilated by $\sqrt{pA}$. In particular, in the context of the theorem above, the more reasonable hypothesis on the map $R\to A$  is that it is almost flat, that is to say that $\Tor^{R}_{i}(-,A)$ is almost zero for $i\ge 1$.  With this in mind, in Section~\ref{sec:almost}, we establish the following, more natural, extension of the $p$-adic Kunz theorem in the almost setting.

\begin{Theorem}[see Corollary~\ref{cor:almostpadicKunz}]
Let $R$ be a noetherian $p$-torsionfree ring containing $p$ in its Jacobson radical.  If there exists a map $R \to A$ with $A$ perfectoid that is almost flat and zero is the only $R$-module $M$ for which $M\otimes_{R}A$  is zero, then $R$ is regular.
\end{Theorem}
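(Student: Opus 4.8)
The plan is to reduce the almost-flat statement to the flat one already encoded in the $p$-adic Kunz theorem (Theorem~\ref{thm:padicKunz}), using the structure of perfectoid rings. Since $R$ is $p$-torsionfree and $p$ lies in the Jacobson radical, it suffices to prove that $R$ is regular after localizing at each maximal ideal, so we may assume $(R,\fm,k)$ is noetherian local with $p\in\fm$, and we keep a map $R\to A$ with $A$ perfectoid, almost flat, and such that $M\otimes_R A=0$ forces $M=0$; in particular $\fm A\ne A$. The main input we want to invoke is Theorem~\ref{thm:perfectoid}: if $\operatorname{Tor}^R_j(A,M)=0$ for $j\gg 0$ then the finitely generated module $M$ has finite projective dimension. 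Applying this with $M=k$ would give $\pd_R k<\infty$, hence $R$ regular, \emph{provided} we can upgrade ``almost flat'' to ``$\operatorname{Tor}^R_j(A,k)=0$ for $j\gg0$''. So the crux is: an almost-flat perfectoid algebra over a noetherian local ring with $p$ a nonzerodivisor on $R$ behaves, for the purpose of detecting $\pd$, like an honestly flat one.

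The key step, and the one I expect to be the main obstacle, is controlling the almost-zero Tor modules. By hypothesis each $\operatorname{Tor}^R_i(A,k)$ for $i\ge 1$ is almost zero, i.e.\ killed by $\sqrt{pA}$; but a priori it need not vanish, and the almost mathematics does not directly see $k$ well because $k$ is $p$-torsion while the almost structure is built from $p$. The idea is to replace $A$ by $A/\sqrt{pA}$, which by Lemma~\ref{lem:PerfdFlatDim} is a perfect ring, and to exploit that $\sqrt{pA}$ is a flat ideal of $A$ with $A/\sqrt{pA}$ perfect. From the short exact sequence $0\to\sqrt{pA}\to A\to A/\sqrt{pA}\to0$ and flatness of $\sqrt{pA}$ over $A$ we get, for any $R$-module $N$ killed by a power of $p$, a comparison between $\operatorname{Tor}^R_\bullet(A,N)$ and $\operatorname{Tor}^R_\bullet(A/\sqrt{pA},N)$; since $\sqrt{pA}$ is $p$-divisible (being radical containing $p$ with $A$ perfectoid) while $N$ is $p$-power torsion, $\sqrt{pA}\otim^{\mathbf L}_R N$ should be concentrated appropriately, so that $\operatorname{Tor}^R_i(A,k)\cong\operatorname{Tor}^R_i(A/\sqrt{pA},k)$ in the relevant degrees. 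Then the almost vanishing of $\operatorname{Tor}^R_i(A,k)$ (killed by $\sqrt{pA}$, which acts as zero on anything factoring through $A/\sqrt{pA}$) forces $\operatorname{Tor}^R_i(A/\sqrt{pA},k)=0$ for $i\ge1$, i.e.\ $R\to A/\sqrt{pA}$ is flat when restricted to $p$-power-torsion modules, in particular $\operatorname{Tor}$-independent against $k$.

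Granting that, I would finish as follows. We now have $\operatorname{Tor}^R_i(A/\sqrt{pA},k)=0$ for $i\ge1$; since $A/\sqrt{pA}$ is a perfect, hence perfectoid (indeed characteristic $p$ perfect) ring, and $\fm(A/\sqrt{pA})\ne A/\sqrt{pA}$ because $A/\sqrt{pA}\ne0$ — which uses the faithfulness hypothesis $M\otimes_R A=0\Rightarrow M=0$ together with $A/\sqrt{pA}=A\otimes_A(A/\sqrt{pA})$ being nonzero, or more directly that $\fm A\ne A$ descends — Theorem~\ref{thm:perfectoid} applies to $M=k$ over the perfectoid $R$-algebra $A/\sqrt{pA}$ and yields $\pd_R k<\infty$. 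Hence $R$ is regular, first locally at every maximal ideal and therefore globally. The step I flag as delicate is the faithfulness/nonvanishing bookkeeping: one must check that the hypothesis ``$M\otimes_R A=0\Rightarrow M=0$'' survives passing to $A/\sqrt{pA}$ at least to the extent of guaranteeing $\fm\cdot(A/\sqrt{pA})\ne(A/\sqrt{pA})$, and that the whole argument is unaffected by the fact that we only control $\operatorname{Tor}$ against $p$-power-torsion modules rather than arbitrary ones — which is exactly why the $p$-torsionfreeness of $R$ (so that $k$ is genuinely $p$-torsion and the Koszul/derived $p$-completion comparisons are clean) and the structural Lemma~\ref{lem:PerfdFlatDim} on flat dimension of $\sqrt{pA}$ are essential.
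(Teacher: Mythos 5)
Your proposal takes a completely different route from the paper, and unfortunately the central reduction is incorrect.

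The crux of your plan is the claim that for $p$-power-torsion $N$ the flat ideal $\sqrt{pA}$ contributes nothing, so that $\Tor^R_i(A,k)\cong\Tor^R_i(A/\sqrt{pA},k)$, and that almost-vanishing of the former therefore forces actual vanishing of the latter. Both halves fail. First, $\sqrt{pA}$ is \emph{not} $p$-divisible: in a $p$-torsionfree perfectoid $A$ one has $\sqrt{pA}=(\varpi^{1/p^\infty})$ with $\varpi^p=pu$, and $p\cdot\sqrt{pA}=(\varpi^{p+1/p^\infty})$ is strictly smaller (e.g.\ $\varpi^{1/p}\notin p\sqrt{pA}$), so $\sqrt{pA}\otimes^{\mathbf L}_R k$ need not vanish even in degree $0$. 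In fact the comparison you want is false in the cleanest example: take $R=\mathbb{Z}_p$, $A$ the $p$-adic completion of $\mathbb{Z}_p[p^{1/p^\infty}]$; then $R\to A$ is honestly flat so $\Tor^R_i(A,k)=0$ for $i\ge 1$, yet $A/\sqrt{pA}\cong\mathbb{F}_p$ and $\Tor^{\mathbb{Z}_p}_1(\mathbb{F}_p,\mathbb{F}_p)\cong\mathbb{F}_p\neq 0$. So your intermediate assertion $\Tor^R_i(A/\sqrt{pA},k)=0$ for $i\ge 1$ cannot be established --- it is already false when $A$ is flat over a regular $R$. Second, and independently, even if the isomorphism held, the inference would be vacuous: $\Tor^R_i(A/\sqrt{pA},k)$ is an $A/\sqrt{pA}$-module, so $\sqrt{pA}$ kills it tautologically; learning that it is killed by $\sqrt{pA}$ gives no vanishing.

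The genuine difficulty that the paper must (and does) confront is that almost-zero Tor modules really can be nonzero, and one cannot simply pass to the perfect quotient $A/\sqrt{pA}$ to make the almostness disappear. The paper's proof of Corollary~\ref{cor:almostpadicKunz} is the chain Proposition~\ref{prp:AFFSpa}(1) $\Rightarrow$ valuative condition~\ref{ss:valuative} $\Rightarrow$ Theorem~\ref{thm:almostpadicKunz}, whose engine is Theorem~\ref{thm:almostperfectoid}. There, instead of killing $\sqrt{pA}$, one builds finite Koszul complexes $A_n=\mathrm{K}(g_1^{1/p^n},\dots,g_d^{1/p^n};A)$ on $p$-power roots of a system of parameters, takes their colimit, uses a semifree dg resolvent $R[X]$ of $k$ and a K\"unneth argument (Claims~\ref{cl:factorization} and~\ref{cl:toriso}) to see $\HH 0{A_n}$ as a summand of $\Tor^R_{s+d+1}(A_n,M)$, and then derives a contradiction from the containment $\sqrt{pA}\subseteq(f_1,\dots,f_d)A$ by mapping to a rank-one $p$-adic valuation ring $W$: a finitely generated non-unit ideal of $W$ cannot contain elements of arbitrarily small valuation. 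The valuative condition is precisely what converts ``almost zero'' into an honest obstruction; Proposition~\ref{prp:AFFSpa} shows it follows from your hypothesis that $-\otimes_R A$ is faithful. Your plan avoids all of this machinery, but the step where you try to upgrade almost-flatness to flatness of $A/\sqrt{pA}$ against $k$ is exactly where the argument has to do real work, and the shortcut you propose does not close that gap. (The point you flagged as delicate --- $\fm\cdot(A/\sqrt{pA})\neq A/\sqrt{pA}$ --- is actually fine by Nakayama, since $\sqrt{pA}$ lies in the Jacobson radical of the $p$-adically complete ring $A$; the real gap is the Tor comparison.)
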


\section{Criteria for finite flat dimension}
Let $S$ be a ring; throughout this work rings will be commutative but not always noetherian.  The flat dimension of an $S$-module $N$
is denoted $\fd_{S}(N)$; when $S$ is noetherian and $N$ is finitely generated, this coincides with the projective dimension of  $N$.  By a local ring $(R,\fm,k)$ we mean that $R$ is a ring with unique maximal ideal $\fm$ and residue field $k$.

\begin{theorem}
\label{thm:smoothable}
Let $(R,\fm,k)$ be a noetherian local ring and $S$ an $R$-algebra containing an ideal $J$ with $\fm S \subseteq J$ and $d\colonequals \fd_{S}(S/J)$  finite. Let $U$ be an $S$-module with $JU\ne U$.

If an $R$-module $M$ has $\Tor^{R}_{i}(U,M)=0$ for  $i=s,\dots, s+d$ and some integer $s\ge 0$, then
\[
\Tor^{R}_{s+d}(k,M)=0\,.
\]
In particular, if $\Tor^{R}_{j}(U,M)=0$ for $j\ge s$, then $\Tor^{R}_{j}(k,M)=0$ for $j\ge s+d$.
\end{theorem}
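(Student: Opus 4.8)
\emph{Setup and reduction to the residue field.} Since $JU\ne U$ and $\fm S\subseteq J$, the module $U/JU$ is a nonzero module over the $k$-algebra $S/J$, hence a nonzero $k$-vector space; the same holds for $U/\fm U=U\otimes_R k$. Consequently each of these is isomorphic, as an $R$-module, to a nonzero direct sum of copies of $k$, so that for every $n$ one has $\Tor^R_n(U/JU,M)\cong\Tor^R_n(k,M)^{(\Lambda)}$ with $\Lambda\ne\emptyset$; in particular $\Tor^R_{s+d}(k,M)=0$ if and only if $\Tor^R_{s+d}(S/J\otimes_S U,M)=0$. So it suffices to prove the latter, and more generally I would prove, by induction on $d$: for every $S$-module $W$ with $\fd_S W\le d$ and every $S$-module $U$ and $R$-module $M$ with $\Tor^R_i(U,M)=0$ for $s\le i\le s+d$, one has $\Tor^R_j\big(\Tor^S_i(W,U),M\big)=0$ whenever $i+j=s+d$; applying this with $W=S/J$ and $(i,j)=(0,s+d)$ gives the theorem, and the base case $d=0$ is immediate since then $W$ is $S$-flat, $\Tor^S_{>0}(W,U)=0$, and $\Tor^R_s(W\otimes_S U,M)\cong W\otimes_S\Tor^R_s(U,M)=0$.

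\emph{The double complex.} Fix a flat resolution $P_\bullet\to W$ over $S$ with $P_i$ concentrated in degrees $0,\dots,d$ (free for $i<d$, and $P_d$ the $d$-th syzygy, which is flat because $\fd_S W\le d$), and a free resolution $F_\bullet\to M$ over $R$ with each $F_j$ finite free; consider the first-quadrant double complex $P_\bullet\otimes_S U\otimes_R F_\bullet$. Flatness of $P_i$ over $S$ gives $\Tor^R_j(P_i\otimes_S U,M)\cong P_i\otimes_S\Tor^R_j(U,M)$, which vanishes for $j\in[s,s+d]$; running the spectral sequence that first takes homology along $F_\bullet$, and using that $P_\bullet$ lives in degrees $[0,d]$, every entry of total degree $s+d$ on the first page already vanishes, so $H_{s+d}$ of the total complex is $0$. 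The other spectral sequence, taking homology first along $P_\bullet$, has second page $E^2_{i,j}=\Tor^R_j(\Tor^S_i(W,U),M)$, converging to the same total homology; its entry $E^2_{0,s+d}=\Tor^R_{s+d}(W\otimes_S U,M)$ is what we must kill, and since it receives only differentials, coming from $E^r_{r,\,s+d-r+1}$ with $2\le r\le d$, and since $E^\infty_{0,s+d}$ is a subquotient of $H_{s+d}(\mathrm{Tot})=0$, it suffices to show $\Tor^R_{s+d-r+1}\big(\Tor^S_r(W,U),M\big)=0$ for $2\le r\le d$.

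\emph{The inductive step.} For $r\ge 2$ the short exact sequence $0\to\Omega^1 W\to P_0\to W\to 0$ (with $P_0$ free, $\fd_S\Omega^1W\le d-1$) gives $\Tor^S_r(W,U)\cong\Tor^S_{r-1}(\Omega^1 W,U)$, and the pair $(r-1,\,s+d-r+1)$ lies on the antidiagonal $i'+j'=s+d$ belonging to a sub-window of $[s,s+d]$ of length $d$; so the inductive hypothesis applied to $\Omega^1W$ yields the required vanishing, and a parallel argument handles the entries $\Tor^R_j(\Tor^S_i(W,U),M)$ with $i\ge 2$, $i+j=s+d$. The one delicate point—and the main obstacle—is the term $\Tor^S_1(W,U)$: it is \emph{not} the dimension shift $\Tor^S_0(\Omega^1W,U)$ but the kernel of $\Omega^1W\otimes_S U\to P_0\otimes_S U$, so its $\Tor^R$ against $M$ must be controlled by dévissage through the exact sequences $0\to\Tor^S_1(W,U)\to\Omega^1W\otimes_S U\to Z\to 0$ and $0\to Z\to P_0\otimes_S U\to W\otimes_S U\to 0$, feeding in the vanishing already known for $\Omega^1W$ and for $P_0\otimes_S U\cong U^{(\Lambda)}$, and—at the top level $W=S/J$—using crucially that every $\Tor^S_i(S/J,U)$ is an $S/J$-module, hence a $k$-vector space, so that $\Tor^R_j(\Tor^S_i(S/J,U),M)$ is a direct sum of copies of $\Tor^R_j(k,M)$. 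Getting the inductive statement strong enough to absorb this $\Tor^S_1$-term without running off the window $[s,s+d]$ is where the real work lies. Finally, the last assertion is formal: if $\Tor^R_j(U,M)=0$ for all $j\ge s$, then applying the first conclusion with $s$ replaced by each $s'\ge s$ gives $\Tor^R_{s'+d}(k,M)=0$, i.e. $\Tor^R_j(k,M)=0$ for all $j\ge s+d$.
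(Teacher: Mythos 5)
Your opening reduction (that $U/JU$ is a nonzero $k$-vector space, so $\Tor^R_{s+d}(k,M)=0$ is equivalent to $\Tor^R_{s+d}(U/JU,M)=0$) is correct, and so is the double-complex setup: the first spectral sequence, with $E^1_{p,q}=P_p\otimes_S\Tor^R_q(U,M)$ vanishing on the whole antidiagonal $p+q=s+d$, does show $\HH{s+d}{\mathrm{Tot}}=0$. This part matches the paper's first step, which is to observe that $\HH{s+d}{(S/J)\lotimes_S(U\lotimes_R M)}=0$ by exactly this spectral sequence. But the rest of your argument has a genuine gap, which you yourself flag: to extract $E^2_{0,s+d}=0$ from $E^\infty_{0,s+d}=0$ in the second spectral sequence you must kill the incoming differentials from positions $(r,\,s+d-r+1)$, and these live on the antidiagonal of total degree $s+d+1$; when you try to control $\Tor^S_1(W,U)$ by d\'evissage, the long exact sequence produces $\Tor^R_{s+d+1}(W\otimes_S U,M)$, which is outside the vanishing window $[s,s+d]$ and over which you have no control. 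Strengthening the inductive hypothesis does not obviously escape this: the same ``one degree too high'' problem recurs at every stage.

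The paper sidesteps the issue entirely by exploiting the $k$-module structure on $V\colonequals(S/J)\lotimes_S U$ instead of analyzing the second spectral sequence term by term. Because $\fm S\subseteq J$, the action of $R$ on $S/J$, and hence on $V$, factors through $k$, so there is a quasi-isomorphism $V\lotimes_R M\simeq V\lotimes_k(k\lotimes_R M)$. Since $k$ is a field, the K\"unneth formula gives $\HH{s+d}{V\lotimes_R M}\cong\bigoplus_j\HH jV\otimes_k\HH{s+d-j}{k\lotimes_R M}$; this equals $0$, and $\HH 0V\cong U/JU\neq 0$, so $\Tor^R_{s+d}(k,M)=0$. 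In the language of your two spectral sequences, the K\"unneth isomorphism is precisely the statement that your second spectral sequence degenerates at $E^2$ (with split extensions), so that $E^2_{0,s+d}=E^\infty_{0,s+d}=0$ without needing to kill any differentials. The fact that you noticed $\Tor^S_p(S/J,U)$ is a $k$-vector space is exactly the right observation; what is missing is to push it through to the degeneration rather than attempting the induction.
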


\begin{proof}
Set $V\colonequals (S/J)\lotimes_{S}U$, viewed as a complex of $S$-modules.  The hypothesis is that $\HH {i}{U\lotimes_{R}M}=0$ for $i=s,\dots, s+d$. Given the quasi-isomorphism of complexes
\[
V \lotimes_{R} M \simeq (S/J) \lotimes_{S} (U\lotimes_{R}M)
\]
it follows by, for example, a standard spectral sequence argument that $\HH {s+d}{V \lotimes_{R} M}=0$.  Since $\fm S\subseteq J$, the action of $R$ on $S/J$ and hence also on $V$, factors through $R/\fm$, that is to say, through $k$. Thus one has a quasi-isomorphism
\[
V \lotimes_{R} M \simeq V \lotimes_{k} (k\lotimes_{R}M)
\]
of complexes of $R$-modules.  The K\"unneth isomorphism then yields the isomorphism below
\[
0= \HH {s+d}{V \lotimes_{R} M} \cong \bigoplus_{j} \HH j{V} \otimes_{k} \HH {s+d-j}{k\lotimes_{R}M}\,.
\]
Since $\HH 0V\cong U/JU$ is nonzero, by hypotheses, it follows that $\HH{s+d}{k\lotimes_{R}M}=0$.
\end{proof}

\begin{corollary}
Let $S$ and $U$ be as in Theorem~\ref{thm:smoothable}. If a finitely generated $R$-module $M$ satisfies $\Tor^{R}_{i}(U,M)=0$ for  $i=s,\dots, s+d$ and some integer $s\ge 0$, then  $M$ has a finite free resolution of length at most $s+d$.
\end{corollary}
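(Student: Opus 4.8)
The plan is to feed the hypothesis straight into Theorem~\ref{thm:smoothable} and then translate the resulting vanishing of a single Tor group against the residue field into finiteness of projective dimension.

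First I would apply Theorem~\ref{thm:smoothable} with the given integer $s$: the assumption $\Tor^{R}_{i}(U,M)=0$ for $i=s,\dots,s+d$ is exactly its hypothesis, so its conclusion yields $\Tor^{R}_{s+d}(k,M)=0$. Note that $S$, $J$, $d$ and $U$ are carried over verbatim, so nothing new needs to be checked about them.

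Then I would invoke the classical fact that over a noetherian local ring $(R,\fm,k)$ the projective dimension of a finitely generated module $M$ is computed by its Betti numbers: in a minimal free resolution $F_{\bullet}$ of $M$ all differentials have entries in $\fm$, so $\Tor^{R}_{n}(k,M)\cong k^{\operatorname{rank} F_{n}}$, and this resolution has length exactly $\pd_{R}M$ (with the convention that the length is $\infty$ when $\pd_{R}M=\infty$). Consequently $\Tor^{R}_{s+d}(k,M)=0$ forces $\operatorname{rank} F_{s+d}=0$, hence $\pd_{R}M\le s+d-1$; in the degenerate case $s+d=0$ it forces $k\otimes_{R}M=0$, so $M=0$ by Nakayama. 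Since a finitely generated projective module over a local ring is free, $F_{\bullet}$ is now a finite free resolution of $M$ of length $\pd_{R}M\le s+d$, which is the assertion.

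There is essentially no obstacle: all of the homological content already resides in Theorem~\ref{thm:smoothable}, and the remaining step is just the rigidity of minimal free resolutions over noetherian local rings. The one point worth flagging is that the corollary assumes vanishing of $\Tor^{R}_{i}(U,M)$ only in the finite range $i=s,\dots,s+d$, so one cannot appeal to the ``in particular'' clause of Theorem~\ref{thm:smoothable}; instead one uses that, for a finitely generated module over a noetherian local ring, the vanishing of $\Tor^{R}_{n}(k,-)$ in a \emph{single} degree $n$ already bounds the projective dimension by $n-1$.
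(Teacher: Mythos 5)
Your proof is correct and follows essentially the same route as the paper's: feed the hypothesis into Theorem~\ref{thm:smoothable} to get $\Tor^{R}_{s+d}(k,M)=0$, then use that over a noetherian local ring the projective dimension of a finitely generated module is read off from the Betti numbers of a minimal free resolution, so vanishing of a single $\Tor^{R}_{n}(k,M)$ already bounds $\pd_{R}M$ by $n-1$. The paper compresses this into the one-line statement $\fd_{R}(M)=\sup\{i\mid\Tor^{R}_{i}(k,M)\ne 0\}$; you have merely unpacked the minimality argument that makes that formula usable from vanishing in a single degree, and handled the degenerate case $s+d=0$ via Nakayama, which is a reasonable bit of extra care.
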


\begin{proof}
Since $R$ is a noetherian local ring, $\fd_{R}(M)=\sup\{i\mid  \Tor^{R}_{i}(k,M)\ne 0\}$ when $M$ is finitely generated. Thus the desired result is a direct consequence of Theorem~\ref{thm:smoothable}.
\end{proof}

A natural question that arises is whether $\Tor^{R}_{s}(U,M)=0$ for some $s\ge 0$ ensures that $M$ has a finite free resolution. This is indeed the case for  $M=k$; the argument depends on a rigidity result, recalled below.

\subsection{Rigidity}
\label{ss:rigidity}
Let $(R,\fm, k)$ be a noetherian local ring and $N$ an $R$-module. Set
\[
s(N) \colonequals \sup \{t\mid \lch{\fm}t{\Hom_R(N, E)}\neq 0\}\,.
\]
where $E$ is the injective hull of the $R$-module $k$ and $\lch{\fm}{*}{-}$ denotes local cohomology with respect to $\fm$. The following statements hold:
\begin{enumerate}[\quad\rm(1)]
\item
$s(N)\le \dim R$;
\item
If $\Tor_i^R(N,k)=0$ for some $i\geq s(N)$, then $\Tor_j^R(N,k)=0$ for all $j\geq i$.
\end{enumerate}
Indeed (1) holds because  $\lch{\fm}i{-}=0$ for $i>\dim R$; see~\cite[Theorem~3.5.7]{Bruns/Herzog:1998a}. Part (2) is contained in~\cite[Proposition~3.3]{Christensen/Iyengar/Marley:2018a} due to Christensen, Iyengar, and Marley.

\begin{corollary}
\label{cor:smoothable}
Let $S$ and $U$ be as in Theorem~\ref{thm:smoothable}.  If \, $\Tor^{R}_{i}(U,k)=0$ for some $i\geq\dim R$, then $R$ is regular.
\end{corollary}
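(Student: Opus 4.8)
The plan is to bootstrap the single $\Tor$-vanishing in the hypothesis up to the string of vanishings required to apply Theorem~\ref{thm:smoothable} with $M=k$, and then to recognize the resulting conclusion as a characterization of regularity. The mismatch to overcome is that Theorem~\ref{thm:smoothable} wants $\Tor^{R}_{j}(U,k)=0$ for the $d+1$ consecutive indices $j=i,i+1,\dots,i+d$, while we are handed vanishing at the single index $j=i$; closing this gap is precisely the job of the rigidity result recalled in \S\ref{ss:rigidity}.

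First I would regard $U$ as an $R$-module via the structure map $R\to S$; note that $U\ne 0$ since $JU\ne U$. Because the invariant $s(U)$ of \S\ref{ss:rigidity} satisfies $s(U)\le\dim R\le i$ by item~(1), the rigidity statement \S\ref{ss:rigidity}(2) applies with $N=U$ and this index $i$: from $\Tor^{R}_{i}(U,k)=0$ one obtains $\Tor^{R}_{j}(U,k)=0$ for all $j\ge i$, in particular for $j=i,\dots,i+d$. Feeding this into Theorem~\ref{thm:smoothable} with $M=k$ and $s=i$ yields $\Tor^{R}_{i+d}(k,k)=0$. This forces the $(i+d)$-th term of the minimal free resolution of $k$ to vanish, so $\pd_{R}k<\infty$, and a noetherian local ring whose residue field has finite projective dimension is regular by Serre's theorem. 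That completes the argument.

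The one step that calls for genuine care — and really the only substantive point — is the rigidity input: one must check that $U$ is being handled as an honest $R$-module rather than merely as an $S$-module, and that the numerical hypothesis $i\ge s(U)$ of \S\ref{ss:rigidity}(2) is actually met, which it is because $i\ge\dim R\ge s(U)$. Once that is secured, combining the $d+1$ consecutive $\Tor$-vanishings through Theorem~\ref{thm:smoothable} and invoking Serre's criterion is entirely formal.
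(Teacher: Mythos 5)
Your argument is correct and matches the paper's own proof essentially line for line: both apply the rigidity statement of \S\ref{ss:rigidity} (noting $i \ge \dim R \ge s(U)$) to promote the single vanishing to $\Tor^{R}_{j}(U,k)=0$ for all $j\ge i$, then feed this into Theorem~\ref{thm:smoothable} with $M=k$ to kill a Betti number of $k$, and conclude regularity via Serre's criterion (the paper cites \cite[Theorem~2.2.7]{Bruns/Herzog:1998a}). The only difference is that you spell out the bookkeeping --- that $U$ is treated as an $R$-module and is nonzero --- which the paper leaves implicit.
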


\begin{proof}
By \ref{ss:rigidity} one has $\Tor_{j}^{R}(U, k)=0$ for all $j\ge i$.  Theorem \ref{thm:smoothable} then yields $\Tor_{j}^{R}(k,k)=0$ for $j\gg 0$, and so $R$ is regular; see by \cite[Theorem~2.2.7]{Bruns/Herzog:1998a}.
\end{proof}

The preceding result may be viewed as an generalization of the descent of regularity along homomorphisms of finite flat dimension. Indeed,  if $(R,\fm,k)\to (S,\fn,l)$ is a local homomorphism with $S$ regular and $\fd_{R}S$ finite, then Corollary~\ref{cor:smoothable} applies with $U\colonequals S$ and $J\colonequals \fn$ to yield that $R$ is regular.

\begin{remark}
\label{rem:AIM}
It should be clear from the proof of Theorem~\ref{thm:smoothable} that the ring $S$ in the hypothesis may well be a differential graded algebra, $U$ a dg $S$-module with $\hh{k\lotimes_{R}U}\ne 0$, and $M$ an $R$-complex; in that generality, the result compares with~\cite[Theorem~6.2.2]{Avramov/Iyengar/Miller:2006a}.
\end{remark}

\section{Recollections on perfect and perfectoid rings}

In this section, we recall the definition of perfect and perfectoid rings (with examples) and summarize their homological features most relevant to us. Fix a prime number $p$. In what follows $\bbZ_{p}$ denotes the $p$-adic completion of $\bbZ$.

\begin{subsection}{Perfect rings}
\label{ss:perfect}
Let $A$ be a commutative ring of positive characteristic $p$ and $\varphi\colon A\to A$ the Frobenius endomorphism: $\varphi(a)=a ^{p}$ for each $a\in A$. The ring $A$ \emph{perfect} if $\varphi$ is bijective; such an $A$ is reduced.

The \emph{perfect closure} of a ring $A$ is the colimit $A\xra{\varphi}A\xra{\varphi}\cdots$,  denoted $A_{\perf}$. It is easy to verify that $A_{\perf}$ is a perfect ring of characteristic $p$, and the map $A \to A_{\perf}$ is the universal map from $A$ to such a ring. Moreover the kernel of the canonical map $A\to A_{\perf}$ is precisely $\sqrt{0}$, the nilradical of $A$; in particular, when $A$ is reduced, we identify $A$ as a subring of $A_{\perf}$.

Each element $x$ in a perfect ring $A$ has a unique $p^{e}$-th root, for each $e\ge 1$. We set
\[
(x^{1/p^\infty}) \colonequals \bigcup_{e=1}^\infty(x^{1/p^e})A
\]
This ideal is reduced for it equals $\sqrt{xA}$.

\begin{lemma}
\label{lem:hochster}
Let $S$ be a perfect ring of positive characteristic. For any set $x_{1},\dots,x_{n}$ of elements of $S$, the ideal $J:=(x_{1}^{1/p^\infty},\dots, x_{n}^{1/p^\infty})$ of $S$ satisfies $\fd_{S}(S/J)\le n$. \qed
\end{lemma}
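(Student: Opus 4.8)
The plan is to build an explicit finite free resolution of $S/J$ of length $n$ over $S$ by exhibiting $S/J$ as a suitable colimit of Koszul complexes and recognizing the colimit as again a Koszul-type (or Čech-type) complex. Concretely, for each $e \geq 1$ consider the regular sequence of $p^e$-th roots $x_1^{1/p^e}, \dots, x_n^{1/p^e}$, and form the Koszul complex $K(e) \colonequals K_S(x_1^{1/p^e},\dots,x_n^{1/p^e})$. For $e' \geq e$ there are transition maps $K(e) \to K(e')$ in each homological degree given (up to signs) by multiplication by the appropriate product of powers of $x_i^{1/p^{e'}}$ on each Koszul summand; these are compatible and the colimit complex $K(\infty) \colonequals \varinjlim_e K(e)$ is a complex of \emph{free} $S$-modules (each term is a finite direct sum of copies of $S$, since filtered colimits along the identity are just $S$) concentrated in degrees $0,\dots,n$. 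In homological degree $0$ this colimit is $S/J$, because $\varinjlim_e S/(x_1^{1/p^e},\dots,x_n^{1/p^e}) = S/J$ by definition of $J$.

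The key step is to show that $K(\infty)$ has homology only in degree $0$, i.e.\ that it is a free resolution of $S/J$; this is the heart of the argument and the place where perfectness is used. Here I would invoke the standard computation (essentially Koszul homology of "infinitely divisible" elements in a perfect ring, cf.\ work of Hochster and also Bhatt--Scholze type arguments) that for a single element $x$ in a perfect ring, the two-term complex $\varinjlim(S \xrightarrow{x^{1/p^{e'}-1/p^e}} S)$ — i.e.\ the colimit of the Koszul complexes $K_S(x^{1/p^e})$ — is quasi-isomorphic to $S/\sqrt{xS}$ concentrated in degree $0$; in particular it has no $H_1$. The vanishing of $H_1$ amounts to: if $a \cdot x^{1/p^e} = 0$ in $S$ then $a$ is already divisible by $x^{1/p^{e'}}$ for some $e' \gg e$, which holds because $S$ is perfect (one can extract $p$-power roots of relations). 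Then, since $K(\infty) = \bigotimes_{i=1}^n \bigl(\varinjlim_e K_S(x_i^{1/p^e})\bigr)$ as a complex of $S$-modules, one computes its homology by the Künneth/tensor-product formula for complexes of flat (here free) modules, and the individual factors being quasi-isomorphic to $S/\sqrt{x_iS}$ forces $K(\infty)$ to be quasi-isomorphic to a complex whose homology vanishes in positive degrees — more precisely $K(\infty) \simeq \bigotimes_i S/\sqrt{x_iS}$, a complex of modules with no higher homology, concentrated in degree $0$ with $H_0 = S/J$.

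The main obstacle I anticipate is the bookkeeping needed to make the transition maps in the colimit of Koszul complexes strictly compatible (getting the signs and the exact powers of $x_i^{1/p^{e'}}$ right on each Koszul basis element), and, more substantively, justifying the flat base-change / Künneth step rigorously: one needs that the colimit of Koszul complexes, as a complex of free modules, computes the derived tensor product, so that $H_*(K(\infty)) \cong H_*\bigl(\bigotimes^{\mathbf L}_i S/\sqrt{x_iS}\bigr)$, and then that this derived tensor product (over a possibly non-noetherian $S$) has no homology above degree $0$. This last point reduces, by induction on $n$, to checking that $\sqrt{x_iS}$ is a flat $S$-module when $S$ is perfect — or, phrased via Tor, that $\Tor^S_{>0}(S/\sqrt{x_iS}, -)$ vanishes on the relevant modules — which is the known flatness property of $(x^{1/p^\infty})$ recalled in Section~\ref{ss:perfect}. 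Assembling these ingredients gives $\fd_S(S/J) \leq n$.
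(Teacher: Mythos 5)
Your plan takes essentially the same route as the paper: realize $S/J$ via a colimit $K(\infty)$ of Koszul complexes and show the colimit has no higher homology, the core $n=1$ computation (vanishing of $H_1$ of the colimit, using reducedness of the perfect ring) being exactly the paper's key step. But there are two problems. A minor one: $K(\infty)$ is a complex of \emph{flat} modules, not free ones. The transition maps in homological degrees $\geq 1$ are multiplications by positive powers of the $x_i$, not identities, so the colimit in each positive degree is a filtered colimit of free modules along multiplication maps --- flat, but generally not free. This is harmless, since flatness is all that is needed to bound $\fd_S(S/J)$.

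The second problem is a genuine gap. Reducing the K\"unneth step to ``$\sqrt{x_iS}$ is flat for each $i$'' does not suffice: for general modules $M,N$ with $\fd_S M = \fd_S N = 1$ one can have $\Tor_1^S(M,N)\neq 0$, so individual flatness of the $\sqrt{x_iS}$ does not force $H_j(K(\infty))=0$ for $j\geq 1$ once $n\geq 2$. What is needed, and what the paper records at the outset of its proof, is the further observation that $S/(x_1^{1/p^\infty})$ is \emph{again perfect} (it is the quotient of a perfect ring by a radical ideal). Then one either reduces by induction to $n=1$ via transitivity of flat dimension over the perfect quotient ring, or checks that radical ideals $I,J$ in a perfect ring satisfy $I\cap J = IJ$ (write $a = a^{1/p}\cdot (a^{1/p})^{p-1}$ with the two factors lying in $I$ and $J$ respectively), which gives the missing $\Tor_1$-vanishing. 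You gesture at the first route with ``by induction on $n$,'' but the perfectness-of-quotients fact that makes the induction go through is not supplied. Relatedly, your claim that the flatness of $(x^{1/p^\infty})$ is ``recalled in Section~\ref{ss:perfect}'' misreads the paper: that section only defines the ideal and notes it is radical; the flatness is precisely the $n=1$ content of Lemma~\ref{lem:hochster} itself, i.e., the thing your proof must actually establish.
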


This result is due to Aberbach and Hochster~\cite[Theorem~3.1]{Aberbach/Hochster:1997a}. We recall an elementary proof from \cite[Lemma 3.16]{Bhatt/Scholze:2017a}.

\begin{proof}
If $R$ is any perfect ring of positive characteristic and $f \in R$, then $R/(f^{1/p^\infty})$ is reduced and hence also perfect. Thus, by induction, it is enough to treat the case when $n=1$. Relabel $x = x_1$ for visual convenience. It suffices to check that the ideal $I \colonequals (x^{1/p^\infty}) \subset S$ is flat as an $S$-module. Consider the direct limit
\[
M\colonequals   \varinjlim\Big(
S \xra{x^{1 - \frac{1}{p}}} S \xra{x^{\frac{1}{p} - \frac{1}{p^2}}} S \to \cdots \to S \xra{x^{\frac{1}{p^n} - \frac{1}{p^{n+1}}}} S \to \dots \Big).
\]
As $M$ is a direct limit of free $S$-modules, it is flat. There is a natural map $M \to I$ determined by sending $1 \in S$ in the $n$-th spot of the diagram above to $x^{\frac{1}{p^n}} \in I$. It is enough to show that this map is an isomorphism. Surjectivity holds as all generators of $I$ are hit.

As to injectivity, pick an element $\ov {g} \in M$ in the kernel. Then $\ov {g}$ lifts to an element $g \in S$ (viewed in the $n$-th copy of $S$ in the diagram above for some $n$) such that $g \cdot x^{\frac{1}{p^n}}=0$. But then  $(g \cdot x^{\frac{1}{p^{n+1}}})^{p} = 0$ and so $g \cdot x^{\frac{1}{p^{n+1}}} = 0$, as $S$ is reduced. As $\frac{1}{p^n} - \frac{1}{p^{n+1}} \geq \frac{1}{p^{n+1}}$, we get $g \cdot x^{\frac{1}{p^n} - \frac{1}{p^{n+1}}} = 0$, so $g$ is killed by the transition map in the system above, whence $\ov {g} = 0$.
\end{proof}

\begin{remark}
An important property of perfect rings is that they admit canonical (and unique) lifts to $\bbZ_p$. Indeed, if $A$ is a perfect ring of characteristic $p$, then the ring $W(A)$ of Witt vectors of $A$ is a $p$-torsionfree and $p$-adically complete ring with $W(A)/p \cong A$. In fact, any such lift is uniquely isomorphic to $W(A)$: the functors $A \mapsto W(A)$ and $B \mapsto B/p$ implement an equivalence between the categories of perfect rings of characteristic $p$ and the category of $p$-torsionfree and $p$-adically complete rings $B$ with $B/p$ being perfect. This perspective can help guess $W(A)$ in concrete situations. For instance, it follows that the $p$-adic completion of $\bbZ[x_1^{1/p^\infty},...,x_d^{1/p^\infty}]$ coincides with the Witt vectors of $\mathbb{F}_p[x_1,...,x_d]_{\perf}$.
\end{remark}

\end{subsection}

\begin{subsection}{Perfectoid rings}
\label{ss:perfectoid}

Let us recall the notion of a perfectoid ring from \cite[Definition 3.5]{Bhatt/Morrow/Scholze:2016a}; this is sometimes  referred to as \emph{integral perfectoid} to emphasize its integral nature, and to contrast it with the perfectoid Tate rings that arise in the context of perfectoid spaces.

For any commutative ring $A$, consider the {\em tilt} $A^\flat$ of $A$ defined by
\[
A^\flat\colonequals  \varprojlim_{x \mapsto x^p} A/p\,.
\]
This ring is perfect of characteristic $p$ and the projection map $A^\flat \to A/p$ is the universal map from a perfect ring to $A/p$.  Set
\[
A_{\inf}(A)\colonequals  W(A^\flat)\,.
\]
When $A$ is $p$-adically complete, the projection map $A^\flat \to A/p$ lifts uniquely to a map $\theta:A_{\inf}(A) \to A$, called \emph{Fontaine's $\theta$-map}.

\begin{definition}
\label{def:perfectoid}
A ring $A$ is \emph{perfectoid} if the following conditions hold:
\begin{enumerate}
\item $A$ is $p$-adically complete.
\item The Frobenius $A/p \to A/p$ is surjective.
\item The kernel of Fontaine's map $\theta:A_{\inf}(A) \to A$ is principal.
\item There exists an element $\varpi \in A$ with $\varpi^p = p u$ for a unit $u$.
\end{enumerate}
The category of perfectoid rings is the full subcategory of all commutative rings spanned by perfectoid rings.
\end{definition}

There is a more explicit characterization of perfectoid rings in terms of Teichmuller expansions that avoids directly contemplating the $A_{\inf}(-)$ construction. Recall that for any perfect ring $B$ of characteristic $p$, each $f \in W(B)$ can be written \emph{uniquely} as $\sum_{i=0}^\infty [b_i] p^i$ where $b_i \in B$ and the map $[\cdot]:B \to W(B)$ is the multiplicative (but not additive) Teichmuller section to the projection $W(B) \to B$;  we refer to this presentation as the Teichmuller expansion of $f$. One then has the following characterization of perfectoidness:

\begin{lemma}
\label{rmk:AltDefPerf}
A ring $A$ is perfectoid if and only if there exists a perfect ring $B$ of characteristic $p$ and an isomorphism $W(B)/(\xi) \cong A$, where the coefficient of $p$ in the Teichmuller expansion of $\xi$ is a unit (i.e. $\xi$ is primitive) and $B$ is $(\overline{\xi})$-adically complete, where $\overline{\xi}$ is the image of $\xi$ in $B$. For such a $B$, there is a unique identification $B \cong A^\flat$ compatible with the isomorphism $W(B)/(\xi) \cong A$. In particular, the element $\varpi$ appearing in Definition 3.5 can be assumed to admit a compatible system $\{ \varpi^{1/p^n} \}$ of $p$-power roots.
\end{lemma}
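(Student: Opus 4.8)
The statement repackages the tilting correspondence for perfectoid rings, so the plan is to run both implications through Fontaine's $\theta$-map; the genuinely non-formal inputs will be imported from the structure theory of $A_{\inf}$ in \cite[\S3]{Bhatt/Morrow/Scholze:2016a}.

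For the forward implication, assume $A$ is perfectoid and set $B\colonequals A^\flat$, a perfect ring of characteristic $p$, so that $A_{\inf}(A)=W(B)$ and we have $\theta\colon W(B)\to A$. One first checks $\theta$ is surjective: condition (2) makes the Frobenius on $A/p$ surjective, so every element of $A/p$ admits a compatible system of $p$-power roots and hence the projection $B=A^\flat\to A/p$ (that is, $\theta \bmod p$) is surjective; surjectivity of $\theta$ then follows since $W(B)$ is $p$-adically complete (as $B$ is perfect) and $A$ is $p$-adically complete (condition (1)). Condition (3) gives $\ker\theta=(\xi)$, whence $A\cong W(B)/\xi$. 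It remains to arrange that $\xi$ can be chosen primitive and that $B$ is $\overline{\xi}$-adically complete; both are part of the $A_{\inf}$-structure theory, and the primitivity uses condition (4) in an essential way. The ``in particular'' clause, however, can be extracted directly: starting from any $\varpi_0\in A$ with $\varpi_0^{p}=pu_0$, lift $\varpi_0 \bmod p$ to an element $\varpi_0^{\flat}\in B=A^\flat$ (possible by surjectivity of Frobenius on $A/p$) and replace $\varpi_0$ by $\varpi\colonequals\theta([\varpi_0^{\flat}])$. Then $\varpi\equiv\varpi_0\pmod p$, so by the binomial expansion $\varpi^{p}\equiv\varpi_0^{p}\pmod{p^{2}}$, giving $\varpi^{p}=pu$ with $u$ a unit because $p$ lies in the Jacobson radical of the $p$-complete ring $A$; and by construction $\varpi$ carries the compatible system of $p$-power roots $\theta([(\varpi_0^{\flat})^{1/p^{n}}])$.

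For the reverse implication, let $B$ be a perfect ring of characteristic $p$ and $\xi\in W(B)$ primitive with $B$ being $\overline{\xi}$-adically complete, and set $A\colonequals W(B)/\xi$; we verify Definition~\ref{def:perfectoid}. Primitivity forces $\xi$ to be a non-zero-divisor in $W(B)$ (a standard fact over a perfect base), so there is a short exact sequence $0\to W(B)\xra{\xi}W(B)\to A\to 0$; $p$-adically completing it and using that $W(B)$ is $p$-complete while $B$ is $\overline{\xi}$-complete shows that $A$ is (classically) $p$-adically complete, which is condition (1). Condition (2) holds because $A/p=W(B)/(p,\xi)=B/\overline{\xi}$ and the Frobenius on $B/\overline{\xi}$ is surjective, being so already on the perfect ring $B$. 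For (3) and (4) one identifies the tilt: the natural map $B\to A^\flat=\varprojlim_{x\mapsto x^{p}}B/\overline{\xi}$ sending $b$ to $(b^{1/p^{n}} \bmod \overline{\xi})_{n}$ is an isomorphism, injectivity and surjectivity both using the $\overline{\xi}$-adic completeness and separatedness of $B$; consequently $A_{\inf}(A)=W(B)$ and $\theta$ becomes the quotient map $W(B)\to W(B)/\xi$, so $\ker\theta=(\xi)$ is principal, giving (3). For (4), write $\xi=[b_0]+[b_1]p+[b_2]p^{2}+\cdots$ with $b_1$ a unit and note $\overline{\xi}=b_0$; then in $A$ one has $(p)=([b_0])$ (solve for $p$ using that $[b_1]$ and $1+p(\cdots)$ are units), so $\varpi\colonequals\theta([b_0^{1/p}])$ satisfies $\varpi^{p}=\theta([b_0])=p\cdot(\text{unit})$ and carries the $p$-power roots $\theta([b_0^{1/p^{n+1}}])$. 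Finally the identification $B\cong A^\flat$ just constructed is visibly compatible with $W(B)/(\xi)\cong A$, and its uniqueness follows since $A^\flat$ and $A_{\inf}(A)$ are functorial in $A$.

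The only steps that are not formal are (a) that the generator of the principal ideal $\ker\theta$ may be taken primitive and a non-zero-divisor---this is where condition (4) is indispensable, and it is the most delicate point---and (b) the comparison of classical with derived $p$-adic completeness, which is precisely the role of the hypothesis that $B$ is $\overline{\xi}$-adically complete. I expect (a) to be the main obstacle, and the cleanest route is to import it, together with the identification $A^\flat\cong B$, from \cite[\S3]{Bhatt/Morrow/Scholze:2016a}.
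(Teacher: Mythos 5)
Your proof is correct and takes essentially the same route as the paper: both directions are run through Fontaine's $\theta$-map, the key structural facts (primitivity of a generator of $\ker\theta$, $\overline{\xi}$-adic completeness of $A^\flat$, and the comparison of derived with classical $p$-completeness) are imported from \cite[\S3]{Bhatt/Morrow/Scholze:2016a}, and the tilt $A^\flat\cong B$ is identified via the $\overline{\xi}$-adic completeness hypothesis. You fill in more of the verification than the paper does, which is useful.

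One genuine variant: for the ``in particular'' clause in the forward direction, the paper reads off $\varpi$ directly from the Teichmuller expansion of the primitive generator, writing $\xi = p - [a_0]u$ up to units and setting $\varpi$ to be the image of $[a_0^{1/p}]$ with roots $[a_0^{1/p^{n+1}}]$. You instead take an arbitrary $\varpi_0$ with $\varpi_0^p = pu_0$, lift its reduction to $A^\flat$ via surjectivity of Frobenius, and replace it by $\theta([\varpi_0^\flat])$; the binomial estimate $\varpi^p \equiv \varpi_0^p \pmod{p^2}$ together with $p \in \operatorname{rad}(A)$ then preserves condition (4). This is a correct and slightly more flexible argument, since it shows any chosen $\varpi_0$ can be replaced, modulo $p$, by one admitting a compatible system of $p$-power roots, whereas the paper's construction only produces \emph{some} such $\varpi$. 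The one place your write-up is slightly thin is the passage from the short exact sequence $0 \to W(B) \xra{\xi} W(B) \to A \to 0$ to \emph{classical} $p$-adic completeness of $A$: derived $p$-completeness is immediate, but classical completeness needs a boundedness argument (or the $(p,[\overline{\xi}])$-adic completeness of $W(B)$), which is exactly one of the non-formal points you flag as coming from \cite{Bhatt/Morrow/Scholze:2016a}, so this is fine as stated.
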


The equivalence of this definition with Definition~\ref{def:perfectoid} can be deduced from the discussion in \cite[\S 3]{Bhatt/Morrow/Scholze:2016a} (and is presumably present in \cite[\S 16]{Gabber/Ramero:2017}). The analogous characterization of perfectoid Tate rings can be found in \cite[Proposition 1.1]{Fontaine:2013}, \cite[Theorem 3.6.5]{Kedlaya/Liu:2015} and \cite[Theorem 3.17]{Scholze:2017}. An explicit reference is \cite[Remark 8.6]{Lau:2016a}. (We are grateful to Scholze for bringing \cite{Lau:2016a} to our attention.) For the convenience of the reader, we sketch the proof.

 \begin{proof}[Proof sketch]
 The ``only if'' direction is immediate from \cite[Remark 3.11]{Bhatt/Morrow/Scholze:2016a} as we can simply take $B = A^\flat$, so $W(B) = A_{\inf}(A)$. For the ``if'' direction, the formula $W(B)/(\xi) \cong A$ shows that $B/\overline{\xi} \cong A/p$, whence $B \cong A^\flat$ as $B$ is perfect and $(\overline{\xi})$-adically complete; the formula $A \cong W(B)/(\xi)$ can then be rewritten as $A \cong A_{\inf}(A)/(\xi)$, which immediately gives the perfectoidness of $A$.

 Finally, for $A = W(A^\flat)/(\xi)$, up to multiplication by units, we have $\xi = p - [a_0] u$ where $u \in W(A^\flat)^*$ and $a_0 \in A^\flat$. Set $\varpi \in A$ to be the image of $[a_0^{1/p}]$. Then $\varpi \in A$ satisfies (4) in Definition~\ref{def:perfectoid} and  admits a compatible system of $p$-power roots $\varpi^{1/p^n}\colonequals  [a_0^{1/p^{n+1}}]$.
\end{proof}

Perfectoid rings are reduced; this follows by combining \cite[Corollary 16.3.61 (i)]{Gabber/Ramero:2017} with \cite[Remark 3.8]{Bhatt/Morrow/Scholze:2016a}, or by arguing as in the first proof of \cite[Proposition 4.18 (3)]{Bhatt/Morrow/Scholze:2018}, or simply by \cite[Lemma 8.9]{Lau:2016a}. The most important feature of perfectoid rings for the purposes of this paper is that they are perfect modulo a flat ideal.

\begin{lemma}
\label{lem:PerfdFlatDim}
Let $A$ be a perfectoid ring. Then the ideal $\sqrt{pA} \subset A$ is flat and $\ov {A}\colonequals  A/\sqrt{pA}$ is a perfect ring of characteristic $p$. Moreover, if $J \subset A$ is any radical ideal containing $p$ such that $J\ov {A}$ generated by $n$ elements up to radicals, then $\fd_A(A/\sqrt{J}) \leq n+1$.
\end{lemma}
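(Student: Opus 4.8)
The plan is to establish the three assertions in order: an explicit description of $\sqrt{pA}$ and $\ov{A}$ read off from the Teichm\"uller presentation of $A$, flatness of $\sqrt{pA}$ as an almost verbatim repetition of the proof of Lemma~\ref{lem:hochster}, and the flat-dimension estimate by combining Lemma~\ref{lem:hochster} with a change-of-rings argument.

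By Lemma~\ref{rmk:AltDefPerf} we may write $A\cong W(B)/(\xi)$ with $B=A^{\flat}$ perfect of characteristic $p$ and, after adjusting by a unit, $\xi=p-[a_{0}]u$ for some $u\in W(B)^{\times}$ and $a_{0}\in B$; moreover $\varpi\in A$ may be taken to be the image of $[a_{0}^{1/p}]$, with a compatible system of $p$-power roots $\varpi^{1/p^{n}}$ given by the images of $[a_{0}^{1/p^{n+1}}]$. Since $\varpi^{p}=pu$ with $u$ a unit we have $\sqrt{pA}=\sqrt{\varpi A}$, and I claim this ideal equals $(\varpi^{1/p^{\infty}})\colonequals\bigcup_{n}\varpi^{1/p^{n}}A$. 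Each $\varpi^{1/p^{n}}$ lies in $\sqrt{pA}$ because $(\varpi^{1/p^{n}})^{p^{n+1}}=pu$, so one inclusion is immediate; for the reverse it suffices to see that $A/(\varpi^{1/p^{\infty}})$ is reduced. Here the presentation computes everything: modulo $[a_{0}^{1/p^{n+1}}]$ one has $[a_{0}]=[a_{0}^{1/p^{n+1}}]^{p^{n+1}}=0$ by multiplicativity of the Teichm\"uller map, hence $p\equiv[a_{0}]u\equiv 0$, so
\[
A/\varpi^{1/p^{n}}A \;\cong\; W(B)/(\xi,\,[a_{0}^{1/p^{n+1}}]) \;\cong\; W(B)/(p,\,[a_{0}^{1/p^{n+1}}]) \;\cong\; B/a_{0}^{1/p^{n+1}}B\,.
\]
Passing to the colimit over $n$ gives $A/(\varpi^{1/p^{\infty}})\cong B/(a_{0}^{1/p^{\infty}})=B/\sqrt{a_{0}B}$, which is reduced, indeed a perfect ring of characteristic $p$ as a reduced quotient of the perfect ring $B$. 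This proves $\sqrt{pA}=(\varpi^{1/p^{\infty}})$ and identifies $\ov{A}=A/\sqrt{pA}$ with $B/\sqrt{a_{0}B}$.

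Flatness of $\sqrt{pA}=(\varpi^{1/p^{\infty}})$ is then proved exactly as in Lemma~\ref{lem:hochster}, with the perfect ring there replaced by $A$ and reducedness of a perfect ring replaced by the fact (recalled above) that perfectoid rings are reduced: the telescope $M\colonequals\varinjlim\bigl(A\xra{(\varpi^{1/p})^{p-1}}A\xra{(\varpi^{1/p^{2}})^{p-1}}A\to\cdots\bigr)$ is a filtered colimit of free $A$-modules, hence flat, and the natural map $M\to(\varpi^{1/p^{\infty}})$ sending $1$ in the $n$-th spot to $\varpi^{1/p^{n}}$ is an isomorphism: surjectivity is clear, and for injectivity, if $g\cdot\varpi^{1/p^{n}}=0$ then $(g\varpi^{1/p^{n+1}})^{p}=g^{p-1}(g\varpi^{1/p^{n}})=0$, so $g\varpi^{1/p^{n+1}}=0$ by reducedness, whence $g\cdot(\varpi^{1/p^{n+1}})^{p-1}=0$, i.e.\ $g$ is killed by the next transition map.

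For the flat-dimension bound, since $J$ is radical and contains $p$ we have $\sqrt{pA}\subseteq J$, so $J\ov{A}=J/\sqrt{pA}$, $\ov{A}/J\ov{A}\cong A/J$, and $\sqrt{J}=J$. By hypothesis $J\ov{A}$ is the radical of an ideal of $\ov{A}$ generated by $n$ elements; since $\ov{A}$ is perfect, such a radical ideal has the form $(\ov{x}_{1}^{1/p^{\infty}},\dots,\ov{x}_{n}^{1/p^{\infty}})$, so Lemma~\ref{lem:hochster} gives $\fd_{\ov{A}}(A/J)\le n$. To pass from $\ov{A}$ to $A$, the exact sequence $0\to\sqrt{pA}\to A\to\ov{A}\to 0$ together with the flatness just proved gives $\fd_{A}(\ov{A})\le 1$; since every flat $\ov{A}$-module is a filtered colimit of finite free $\ov{A}$-modules, it has flat dimension $\le 1$ over $A$, and feeding a flat $\ov{A}$-resolution of $A/J$ of length $\le n$ into this bound by dimension shifting yields $\fd_{A}(A/J)\le n+1$, as desired. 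The one substantive point is the structural computation in the second paragraph; granting that, the flatness argument is a transcription of Lemma~\ref{lem:hochster} and the final estimate is routine homological bookkeeping.
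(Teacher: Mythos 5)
Your proof is correct, and its skeleton matches the paper's: identify $\sqrt{pA}=(\varpi^{1/p^\infty})$ and $\ov A\cong A^\flat/(a_0^{1/p^\infty})$ via the Teichm\"uller presentation, prove $\sqrt{pA}$ is flat, then finish with a change-of-rings estimate. The one place where you diverge substantively is the flatness step. The paper disposes of the $p$-torsionfree case by observing that $(\varpi^{1/p^\infty})$ is a rising union of free rank-one ideals, and then handles the general case by a change-of-rings along the Tor-independent pushout square $W(A^\flat)\to A$, $W(A^\flat)\to W(\ov A)$ from \cite[Lemma 3.13]{Bhatt/Morrow/Scholze:2016a}, reducing to the Tor amplitude of $\ov A$ over $A^\flat$ and quoting Lemma~\ref{lem:hochster}. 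You instead run the telescope argument from the proof of Lemma~\ref{lem:hochster} directly over $A$: the transition maps make sense because $\varpi$ comes with a compatible system of $p$-power roots (Lemma~\ref{rmk:AltDefPerf}), and the injectivity step only uses that $A$ is reduced, which the paper records as a general fact about perfectoid rings just before the lemma. This is a genuinely more elementary route that sidesteps the $p$-complete Tor amplitude formalism and the pushout square, at the cost of invoking reducedness of perfectoid rings as an input (which the paper's flatness argument does not need, though it is used elsewhere). Your handling of the final bound via Govorov--Lazard plus dimension shifting is a slightly more hands-on version of the paper's ``transitivity of flat dimensions''; the standard change-of-rings spectral sequence $\Tor^{\ov A}_p(-,\Tor^A_q(\ov A,-))\Rightarrow\Tor^A_{p+q}(-,-)$ gives the same $\fd_A\le\fd_{\ov A}+\fd_A(\ov A)$ bound with less fuss, but what you wrote is fine.
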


The construction $A \mapsto \ov {A}$ gives a functor from perfectoid rings to perfect rings, left adjoint to the inclusion in the other direction (see Example~\ref{ex:Perfectoid} (1) below).

\begin{proof}
Write $A = W(A^\flat)/(\xi)$ and let $\varpi$ be as in Lemma~\ref{rmk:AltDefPerf}, with a compatible system of $p$-power roots.
We first show that $A/(\varpi^{1/p^\infty})$ is perfect and thus reduced; since $(p) = (\varpi^{p})$, this will imply that $\sqrt{pA} = (\varpi^{1/p^\infty})$ and hence that $A/\sqrt{pA}$ is perfect. Note that
\[
A/(\varpi^{1/p^\infty}) \cong W(A^\flat)/(p - [a_0]u, [a_0^{1/p^\infty}]) \cong A^\flat/(a_0^{1/p^\infty}).
\]
This ring is perfect as $A^\flat$ is perfect, proving the claim.

Next, we check that $\fd_A(\ov {A}) \leq 1$; this is equivalent to showing that $\sqrt{pA}$ is a flat ideal. Note that if $A$ is $p$-torsionfree, then $\sqrt{pA} = (\varpi^{1/p^\infty})$ is a rising union of principal ideals generated by elements which are not zero divisors, and thus trivially flat. In general, as $\ov {A}$ has characteristic $p$, it is enough to check that $p$-complete Tor amplitude of $\ov {A} \in D(A)$ lies in $[-1,0]$, in the sense of \cite[\S 4.1]{Bhatt/Morrow/Scholze:2018}.  By \cite[Lemma 3.13]{Bhatt/Morrow/Scholze:2016a}, the square
\[ \xymatrix{ W(A^\flat) \ar[r] \ar[d] & A \ar[d] \\
		   W(\ov {A}) \ar[r] & \ov {A} }\]
is a Tor-independent pushout square. By base change, it is enough to check that $p$-complete Tor amplitude of $W(\ov {A}) \in D(W(A^\flat))$ lies in $[-1,0]$. As $p$ is not a zero divisor on both $W(A^\flat)$ and $W(\ov {A})$, this reduces to checking that the Tor amplitude of $\ov {A}$ over $A^\flat$ lies in $[-1,0]$. But this follows from Lemma~\ref{lem:hochster} since $\ov {A} = A^\flat/(a_0^{1/p^\infty})$.

The final assertion about flat dimensions now also follows from Lemma~\ref{lem:hochster} and transitivity of flat dimensions.
\end{proof}

\begin{example}
\label{ex:Perfectoid}
Let us give some examples relevant to this paper.

\subsubsection{\emph{(1)} Perfect rings} A ring $A$ of characteristic $p$ is perfectoid if and only if it is perfect. The ``if'' direction is immediate: the kernel of $A_{\inf}(A) \cong W(A) \to A$ is generated by $p$, and we may take $\varpi = 0$. For the reverse implication, see \cite[Example 3.11]{Morrow:2016}. In terms of Lemma~\ref{rmk:AltDefPerf}, these are exactly the perfectoid rings of the form $W(B)/(p)$ for a perfect ring $B$ of characteristic $p$.

\subsubsection{\emph{(2)} Absolute integral closures} If $A$ is an absolutely integrally closed domain, then its $p$-adic completion $\widehat{A}$ is perfectoid. This is clear from the previous example when $A$ has characteristic $p$. In the mixed characteristic case, the ring $A$ is a $p$-torsionfree ring that contains an element $\varpi \in A$ with $\varpi^p = p$. As $A$ is absolutely integrally closed, the $p$-power map map $A/(\varpi) \to A/(\varpi^p)$ is readily checked to be an isomorphism. But then the same holds true for $\widehat{A}/(\varpi) \to \widehat{A}/(\varpi^p)$. The perfectoidness of $\widehat{A}$ follows from \cite[Lemma 3.10]{Bhatt/Morrow/Scholze:2016a}.

\subsubsection{\emph{(3)} Perfectoid polynomial rings} The $p$-adic completion $A$ of $\bbZ[p^{1/p^\infty}, x_1^{1/p^\infty},\dots,x_d^{1/p^\infty}]$ is perfectoid. In terms of the characterization in Lemma~\ref{rmk:AltDefPerf}, one takes $B$ to be the $t$-adic completion of $\mathbb{F}_p [t, x_1,\dots,x_d]_{\perf}$ and $\xi = p - [t]$.

\subsubsection{\emph{(4)} Perfectoidification of unramified regular local rings} Let $(R,\fm,k)$ be a complete noetherian regular local ring of mixed characteristic. Assume $k$ is perfect and write $W = W(k)$ for the Witt vectors of $k$. Assume that $R$ is unramified, i.e., $p \notin \fm^2$. By the Cohen structure theorem, we can write $R \cong W \llbracket x_2,\dots,x_d \rrbracket$. Let $A$ be the $p$-adic completion of
\[
W[p^{1/p^\infty},x_2^{1/p^\infty}, \dots , x_d^{1/p^\infty}] \otimes_{W[x_1,\dots ,x_d]} R\,.
\]
By a variant of the previous example, one checks that the ring $A$ is perfectoid and the map $R \to A$ is faithfully flat.

\subsubsection{\emph{(5)} Perfectoidification of ramified regular local rings} Fix $(R,\fm,k)$ and $W$ be as in the first two sentences of the previous example. Assume that $R$ is ramified, i.e., $p \in \fm^2$. By the Cohen structure theorem, we can write $R = W \llbracket x_1,\dots, x_d \rrbracket/(p-f)$ where $f \in (x_1,\dots,x_d)^2$. Let $A$ be the $p$-adic completion of
\[
W[x_1^{1/p^\infty}, \dots , x_d^{1/p^\infty}] \otimes_{W[x_1,\dots ,x_d]} R\,.
\]
As observed by Shimomoto \cite[Proposition 4.9]{Shimomoto:2014}, the ring $A$ is perfectoid and the map $R \to A$ is faithfully flat (see also \cite[Example 3.4.5]{Andre:2016a}, \cite[Proposition 5.2]{Bhatt:2016a}). In terms of Lemma~\ref{rmk:AltDefPerf}, one takes $B$ to be the $(y_1,\dots,y_d)$-adic completion of $k[y_1,\dots,y_d]_{\perf}$ and $\xi = p-f([y_i])$ to get that $A \cong W(B)/(\xi)$ is perfectoid.

\subsubsection{\emph{(6)} Modifications in characteristic $p$} One can construct new perfectoid rings from old ones by changing their special fibers: if $A$ is a perfectoid ring, and $B \to \ov {A}$ is any map of perfect rings of characteristic $p$ (such as an inclusion) then $\widetilde{B}\colonequals  A \times_{\ov {A}} B$ is a perfectoid ring. Indeed, if we write $A = W(A^\flat)/(\xi)$ for a primitive element $\xi$, then setting $B'\colonequals  A^\flat \times_{\ov {A}} B$ gives a perfect ring of characteristic $p$ such that $W(B') \cong W(A^\flat) \times_{\ov {A}} B$. Thus. as $\xi$ maps to $0$ in $\ov {A}$, it lifts uniquely to $W(B')$. One  checks that $\widetilde{B} \cong W(B')/(\xi)$ is indeed perfectoid.

\subsubsection{\emph{(7)} Completed localizations} If $A$ is a perfectoid ring and $S \subset A$ is a multiplicative set, then the $p$-adic completion $\widehat{S^{-1} A}$ of $S^{-1} A$ is perfectoid. In particular, the $p$-adically completed local rings of $A$ are perfectoid.

\subsubsection{\emph{(8)} Products} An arbitrary product of perfectoid rings is perfectoid. This follows immediately from Definition~\ref{def:perfectoid} as the functor $A \mapsto A_{\inf}(A) = W(A^\flat) = W(\lim_{x \mapsto x^p} A/p)$ commutes with products.
\end{example}

\begin{remark}
All  examples above involve perfectoid rings that either have characteristic $p$ or were $p$-torsionfree. One can take products to obtain examples where neither of these properties holds true. Using Example~\ref{ex:Perfectoid} (6), one can also construct such examples which are closely related to products, but not themselves products. In fact, all examples  have this flavor: if $A$ is a perfectoid ring, then its maximal $p$-torsionfree quotient $A_{t\!f}\colonequals  A/A[p^\infty]$ is a perfectoid $p$-torsionfree ring, and the natural maps give an exact sequence
\[
0 \to A \to A_{t\!f} \times \ov {A} \to \ov {A_{t\!f}} \to 0
\]
of $A$-modules by \cite[Remark 8.8]{Lau:2016a}. This  realizes $A$ as the fiber product $A_{tf} \times_{\ov {A_{t\!f}}} \ov {A}$. In other words, one can construct arbitrary perfectoid rings by ``modifying'' $p$-torsionfree perfectoid rings by the procedure of Example~\ref{ex:Perfectoid} (6).
\end{remark}
\end{subsection}

\section{Applications}
In this section we prove the results dealing with perfect, and with perfectoid, algebras stated in the Introduction. Fix a prime $p$.

\begin{theorem}
\label{thm:perfectoid}
Let $(R,\fm,k)$ be a noetherian local ring and $A$ an $R$-algebra with $\fm A\ne A$. Assume that $A$ is perfectoid (and thus $k$ has characteristic $p$). Set $n = \dim(R)$.

If $M$ is an $R$-module with $\Tor^{R}_{i}(A,M)=0$ for  $s\le i\le  s+n+1$ and integer $s\ge 0$, then
\[
\Tor_{s+n+1}^{R}(k,M)=0\,.
\]
In particular, if $\Tor^{R}_{j}(A,M)=0$ for $j\gg 0$, then $\Tor^{R}_{j}(k,M)=0$ for $j\gg 0$.
\end{theorem}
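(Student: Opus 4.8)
The plan is to reduce Theorem~\ref{thm:perfectoid} to Theorem~\ref{thm:smoothable} by producing, from the perfectoid $R$-algebra $A$, an appropriate triple $(S,J,U)$ satisfying the hypotheses there. The natural choice is $S\colonequals A$ itself and $U\colonequals A$ as a module over itself, so what must be arranged is an ideal $J\subseteq A$ with $\fm A\subseteq J$, with $JU=JA=J\ne A=U$, and with $\fd_{A}(A/J)\le n+1$. The obvious candidate is $J\colonequals\sqrt{\fm A + pA}$, i.e.\ the radical of the ideal generated by the image of $\fm$ together with $p$. (Note $p$ lies in $\fm$ since $k$ has characteristic $p$, so in fact $J=\sqrt{\fm A}$; I will keep $p$ visible because it is what makes the flat-dimension bound work.)

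First I would check $J\ne A$: since $\fm A\ne A$ by hypothesis and $A$ is perfectoid hence reduced, one has to rule out that adjoining nilpotents-free radical closure of $\fm A$ fills up $A$; here the input is that $A$ is $p$-adically complete and local-ish behavior near $\fm A$ — concretely, $A/\fm A$ is nonzero, and $A/\sqrt{\fm A}$ surjects onto the reduction of $A/\fm A$, which is nonzero because $A/\fm A$ is a nonzero ring. So $J\ne A$, equivalently $JU\ne U$. Second, and this is the crux, I would bound $\fd_{A}(A/J)$. This is exactly the content of the ``moreover'' clause of Lemma~\ref{lem:PerfdFlatDim}: $J$ is a radical ideal of $A$ containing $p$, and $J\ov A = (\fm A + pA)\ov A$-radical $= \fm\ov A$-radical inside $\ov A=A/\sqrt{pA}$; since $\ov A$ is a perfect ring receiving a map from the noetherian local ring $R$, the ideal $\fm\ov A$ is generated up to radicals by the images of a system of parameters $x_{1},\dots,x_{n}$ of $R$, hence by $n$ elements up to radicals. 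Lemma~\ref{lem:PerfdFlatDim} then gives $\fd_{A}(A/\sqrt{J}) = \fd_{A}(A/J)\le n+1 = \dim R + 1$, which is the $d$ appearing in Theorem~\ref{thm:smoothable}.

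With $S=U=A$, $J=\sqrt{\fm A+pA}$, and $d\le n+1$ in hand, Theorem~\ref{thm:smoothable} applies directly: if an $R$-module $M$ satisfies $\Tor^{R}_{i}(A,M)=0$ for $s\le i\le s+d$, then $\Tor^{R}_{s+d}(k,M)=0$. Since we are only given vanishing in the range $s\le i\le s+n+1$, I would note that if $d<n+1$ we can still invoke the theorem with the smaller value of $d$ and get vanishing of $\Tor^{R}_{s+d}(k,M)$; but then, because the hypothesis $\Tor^{R}_{i}(A,M)=0$ persists up to $i=s+n+1$, the ``in particular'' clause of Theorem~\ref{thm:smoothable} (the statement that vanishing of $\Tor^{R}_{j}(U,M)$ for $j\ge s$ forces vanishing of $\Tor^{R}_{j}(k,M)$ for $j\ge s+d$) — or simply re-running the argument — yields $\Tor^{R}_{s+n+1}(k,M)=0$ as desired. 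The cleanest phrasing is: replace $s$ by $s+(n+1-d)$ if necessary so that the $\Tor^{R}_{\bullet}(A,M)$-vanishing window of length $d$ sits at the top of the given window, apply Theorem~\ref{thm:smoothable}, and conclude $\Tor^{R}_{s+n+1}(k,M)=0$.

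The main obstacle, and the only step requiring genuine care, is the flat-dimension bound $\fd_{A}(A/J)\le n+1$: one must correctly identify $J\ov A$ as generated up to radicals by $n$ elements, which rests on the fact that $R$ noetherian local of dimension $n$ has a system of parameters, that these map into $\ov A$, and that $\sqrt{\fm\ov A}=\sqrt{(x_{1},\dots,x_{n})\ov A}$ — together with verifying that the hypotheses of Lemma~\ref{lem:PerfdFlatDim} are met ($J$ radical, $p\in J$). The remaining two points — that $J\ne A$, and the bookkeeping to land at exactly $s+n+1$ rather than $s+d$ — are routine. The last displayed ``in particular'' assertion of the theorem, that $\Tor^{R}_{j}(A,M)=0$ for $j\gg0$ implies $\Tor^{R}_{j}(k,M)=0$ for $j\gg 0$, is then immediate by applying the first part with any sufficiently large $s$.
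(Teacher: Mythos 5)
Your proposal is correct and essentially reproduces the paper's proof: take $S=U=A$ with $J$ the radical of a system-of-parameters ideal (equivalently $\sqrt{\fm A}$, as you observe), invoke Lemma~\ref{lem:PerfdFlatDim} for the bound $\fd_A(A/J)\le n+1$, and feed this into Theorem~\ref{thm:smoothable}. Your final paragraph's shift of $s$ to handle the case $\fd_A(A/J)<n+1$ is a bit of bookkeeping the paper glosses over by simply writing ``$d=n+1$''; otherwise the two arguments are the same.
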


\begin{proof}
Let $J \subset A$ the radical of the ideal generated by a system of parameters $x_1,...,x_n$ of $R$. Then $\fm A \subset J$. Moreover, since $J$ is generated by $n$ elements up to radicals and contains $p$,  Lemma~\ref{lem:PerfdFlatDim} implies that $\fd_A(A/J) \leq n+1$. Theorem~\ref{thm:smoothable} applied with $S = A$, $U = A$ and $d = n+1$ then implies the result.
\end{proof}

\begin{remark}
\label{rem:oneless}
If the perfectoid ring $A$ appearing in Theorem~\ref{thm:perfectoid} is either perfect or $p$-torsionfree, the hypothesis on the vanishing range can be improved slightly: it is sufficient to require vanishing of $\Tor^{R}_{i}(A,M)=0$ when  $s\le i\le  s+n$ for an integer $s\ge 0$.

To see this, note that we can replace the sequence $x_1,...x_n$ in the proof above by a system of parameters for the image $\ov {R}$ of $R \to A$ without changing the proof. Now if $A$ is perfect, then $\ov {R}$ has characteristic $p$, so we can replace the reference to Lemma~\ref{lem:PerfdFlatDim} by a reference to Lemma~\ref{lem:hochster}, which results in an improvement of $1$ in the range of vanishing. If $A$ is $p$-torsionfree, so is $\ov {R}$, so we may take $x_1 = p$, in which case the ideal $J\ov {A}$ appearing in the proof above is generated by $n-1$ elements, up to radical; this again results in an improvement of $1$. We do not know if this improvement of $1$ is possible in general.
\end{remark}

\begin{remark}
\label{rem:perfectoid}
It will be evident from the proof that the result above holds under the weaker hypothesis that the $R$-algebra $A$ factors through a perfectoid algebra. What is more, the only relevant property of perfectoid algebras that is needed is that it is of positive characteristic and perfect modulo a flat ideal.
\end{remark}

There is an extension of Theorem~\ref{thm:perfectoid} to the non-local case. It can be viewed an extension of Herzog's result~\cite[Satz~3.1]{Herzog:1974a} that, for a finitely generated $R$-module $M$, vanishing of Tor against high Frobenius twists of $R$ implies finiteness of the flat dimension of $M$.

\begin{corollary}
\label{cor:flat-dim}
Let $R$ be a noetherian ring with $p$ in its Jacobson radical and let $A$ be a perfectoid $R$-algebra such that $\Spec A\to\Spec R$ is surjective. Let $M$ be an $R$-module satisfying $\Tor_{i}^{R}(A, M)=0$ for $i\gg 0$.

If $M$ is finitely generated, or $p=0$ in $R$ and $\dim R$ is finite,  then $\fd_RM$ is finite.
\end{corollary}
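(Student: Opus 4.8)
The plan is to localize the problem at the maximal ideals of $R$ and apply Theorem~\ref{thm:perfectoid}, after first replacing $A$ by $\ov A\colonequals A/\sqrt{pA}$. By Lemma~\ref{lem:PerfdFlatDim} the ideal $\sqrt{pA}$ is flat over $A$ and $\ov A$ is a perfect ring of characteristic $p$, hence perfectoid by Example~\ref{ex:Perfectoid}~(1). For any $R$-module $M$, flatness of $\sqrt{pA}$ over $A$ gives $\sqrt{pA}\lotimes_R M\simeq \sqrt{pA}\otimes_A(A\lotimes_R M)$, hence $\Tor^R_i(\sqrt{pA},M)\cong \sqrt{pA}\otimes_A\Tor^R_i(A,M)$; in particular $\Tor^R_i(\sqrt{pA},M)=0$ for $i\gg 0$, and the exact sequence $0\to\sqrt{pA}\to A\to\ov A\to 0$ then yields $\Tor^R_i(\ov A,M)=0$ for $i\gg 0$. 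Moreover, since $p$ lies in the Jacobson radical of $R$ every maximal ideal of $R$ contains $p$, and together with the surjectivity of $\Spec A\to\Spec R$ this shows that the image of $\Spec\ov A=V(pA)\subseteq\Spec A$ in $\Spec R$ equals $V(pR)$, which contains every maximal ideal. The point of the reduction is that, unlike $A$, the ring $\ov A$ has characteristic $p$, so \emph{all} of its localizations are again perfect and hence perfectoid.

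Now fix a maximal ideal $\fm$ of $R$; its residue field $k(\fm)$ has characteristic $p$. Set $U\colonequals \ov A\otimes_R R_\fm$, a localization of the perfect ring $\ov A$, hence a perfectoid $R_\fm$-algebra; and $\fm U\ne U$ because $U/\fm U\cong\ov A/\fm\ov A\ne 0$, $\fm$ being in the image of $\Spec\ov A\to\Spec R$. Flat base change along $R\to R_\fm$ gives $\Tor^{R_\fm}_i(U,M_\fm)\cong\Tor^R_i(\ov A,M)_\fm$, which vanishes for $i\gg 0$. Hence Theorem~\ref{thm:perfectoid} applies over $R_\fm$ and yields $\Tor^{R_\fm}_j(k(\fm),M_\fm)=0$ for all $j\ge N+\dim R_\fm+1$, where $N$ is any integer with $\Tor^R_i(\ov A,M)=0$ for $i\ge N$.

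It remains to deduce that $\fd_R M$ is finite, and the two hypotheses enter here differently. If $M$ is finitely generated, the vanishing just obtained says $\pd_{R_\fm}M_\fm<\infty$ for every maximal $\fm$, hence $\pd_{R_\fp}M_\fp<\infty$ for every prime $\fp$ (localization does not increase projective dimension); one then gets $\pd_R M<\infty$ by the usual argument: in a resolution of $M$ by finite free $R$-modules the syzygies $C_n$ are finitely generated, the loci $U_n\colonequals\{\fp\mid (C_n)_\fp\text{ is }R_\fp\text{-free}\}$ are open, increasing in $n$, and cover $\Spec R$, so by quasicompactness of $\Spec R$ one has $U_n=\Spec R$ for some $n$ and $C_n$ is projective. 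If instead $p=0$ in $R$ and $d\colonequals\dim R<\infty$, then $\ov A=A$ and $\dim R_\fm\le d$, so the bound above becomes $\Tor^{R_\fm}_j(k(\fm),M_\fm)=0$ for all $j\ge N+d+1$, uniformly in $\fm$; since over a noetherian local ring the flat dimension of any module equals the largest degree in which $\Tor$ against the residue field is nonzero, this gives $\fd_{R_\fm}M_\fm\le N+d$ for all $\fm$, and therefore $\fd_R M=\sup_\fm\fd_{R_\fm}M_\fm\le N+d<\infty$.

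I expect the main point to be the first reduction: passing from the possibly mixed-characteristic perfectoid ring $A$, whose localizations need not be perfectoid, to the characteristic-$p$ perfect ring $\ov A$, whose localizations remain perfectoid, while keeping both the Tor-vanishing hypothesis (valid because $\sqrt{pA}$ is $A$-flat) and surjectivity onto the maximal spectrum (valid because $p$ is in the Jacobson radical). After that the only delicate issue is obtaining a \emph{uniform} bound on $\fd_{R_\fm}M_\fm$ as $\fm$ varies, which is precisely why the statement splits into the finitely generated case --- handled by quasicompactness of $\Spec R$ even when $\dim R=\infty$ --- and the case $\dim R<\infty$, where the bound coming from Theorem~\ref{thm:perfectoid} is automatically uniform.
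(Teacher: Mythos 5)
Your first reduction, replacing $A$ by $\ov{A}=A/\sqrt{pA}$, is a legitimate alternative to the paper's route: the paper localizes $A$ directly and invokes Theorem~\ref{thm:perfectoid} via Remark~\ref{rem:perfectoid} (noting that $A_\fp$ need not be perfectoid but is still perfect modulo a flat ideal), whereas you trade a short long-exact-sequence argument using the $A$-flatness of $\sqrt{pA}$ for landing in genuinely perfectoid territory after localizing. Both give essentially the same Tor-vanishing input. The finitely generated branch is also correct; your syzygy/quasicompactness argument is precisely the Bass--Murthy lemma that the paper cites.

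There is, however, a real gap in the $p=0$ branch, in the sentence ``over a noetherian local ring the flat dimension of any module equals the largest degree in which $\Tor$ against the residue field is nonzero.'' That equality holds for finitely generated modules (Nakayama plus minimal free resolutions) but is false in general. For instance, let $R$ be a regular local ring of dimension $2$ with maximal ideal $\fm$ and residue field $k$, let $\fp$ be a height one prime, and set $M=k(\fp)=R_\fp/\fp R_\fp$. Since $R_\fp$ is $R$-flat and $k\otimes_R R_\fp=0$ (any $x\in\fm\setminus\fp$ is a unit in $R_\fp$), one has $\Tor_i^R(k,M)=0$ for \emph{every} $i\ge 0$; yet $M$ is not flat over $R$, as
\[
\Tor_1^R(R/\fp,M)_\fp\cong\Tor_1^{R_\fp}(k(\fp),k(\fp))\neq 0,
\]
so $\fd_R M=1$. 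Thus knowing $\Tor_i^{R_\fm}(k(\fm),M_\fm)=0$ for $i\gg 0$ at every maximal ideal $\fm$ does not, for an arbitrary module $M$, bound $\fd_R M$.

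This is exactly the point the paper is careful about: in the $p=0$ case every prime contains $p$, so the same localization argument yields $\Tor_i^{R_\fp}(k(\fp),M_\fp)=0$ for $i\gg 0$ at \emph{every} prime $\fp$, and then \cite[Theorem~4.1]{Christensen/Iyengar/Marley:2018a} is invoked (twice) to pass from that stronger, all-primes vanishing to finiteness of $\fd_R M$. You already have the data needed for this --- your flat base change step works verbatim at any prime containing $p$ --- but the naive residue-field criterion must be replaced by a genuine theorem such as the one from Christensen--Iyengar--Marley, which requires the vanishing at every prime, not just the closed points.
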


\begin{proof}
Fix a prime $\fp\subset  R$ containing $p$. The hypothesis implies $\Tor_i^{R_\fp}(A_{\fp}, M_{\fp})=0$ for $i\gg 0$. Then $\fp A_{\fp} \neq A_{\fp}$, because $\Spec A\to\Spec R$ is surjective.  The ring $A_{\fp}$ is perfect modulo a flat ideal (since the same was true for $A$), so Theorem~\ref{thm:perfectoid} (see also Remark~\ref{rem:perfectoid}) applies and yields that, with $k(\fp)$ the residue of the local ring $R_{\fp}$, one has
\begin{equation}
\label{eq:tor-vanishing}
\Tor_{i}^{R_{\fp}}(k(\fp), M_{\fp})=0\quad\text{for all $i\gg 0$}.
\end{equation}
Since $p$ is in the Jacobson radical, this conclusion holds whenever $\fp$ is a maximal ideal.

When the $R$-module $M$ is finitely generated, the $R_{\fp}$-module $M_{\fp}$ is finitely generated and the vanishing condition above implies that $\fd_{R_{\fp}}(M_{\fp})$ is finite, and since $\fp$ can be an arbitrary maximal ideal, it follows from a result of Bass and Murthy~\cite[Lemma~4.5]{Bass/Murthy:1967a} that $\fd_{R}M$ is finite.

Assume $p=0$ in $R$ and that $\dim R$ is finite. In particular \eqref{eq:tor-vanishing} holds for each prime $\fp$ in $R$. It follows from \cite[Theorem~4.1]{Christensen/Iyengar/Marley:2018a} that
\[
 \Tor_{i}^{R_{\fp}}(k(\fp), M_{\fp})=0 \quad\text{for $i\ge \dim R+1$}
\]
and hence, again from \emph{op.\ cit.},  that $\fd_{R}M$ is finite.
\end{proof}

\begin{remark}
In Corollary~\ref{cor:flat-dim}, the additional hypotheses on $R$, or on $M$, is necessary. For example, if $R$ is a regular ring of positive characteristic and $M:=\oplus_{\fm\in \mathrm{Max}\, R}(R/\fm)$,  one has $\Tor^{R}_{i}(R_{\perf},M)=0$ for each $i\ge 1$, since $R_{\perf}$ is a flat $R$-module. However one has $\fd_{R} M =\dim R$, and the latter can be infinite; see \cite[Appendix A1]{Nagata:1975a}.
\end{remark}

\subsection{Regularity}
\label{ss:regularity}
We can now prove a mixed characteristic generalization of Kunz's theorem~\cite[Theorem~2.1]{Kunz:1969a}, answering a question in \cite[Remark~5.5]{Bhatt:2016a}; see also~\cite[pp. 6]{Andre:2018a}.

\begin{theorem}
\label{thm:padicKunz}
Let $R$ be a noetherian ring with $p$ in its Jacobson radical.  If $R$ is regular, then there exists a faithfully flat map $R \to A$ with $A$ perfectoid.

Conversely, fix a map $R \to A$ with $A$ perfectoid. If\, $\Spec A\to\Spec R$ is surjective and $\Tor^{R}_{i}(A,A)=0$ for $i\gg 0$ (for example, if $A$ is a faithfully flat $R$-algebra), then $R$ is regular.
\end{theorem}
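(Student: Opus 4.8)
The plan is to reduce both implications to the local rings $R_\fm$, for $\fm$ a maximal ideal of $R$, since a noetherian ring is regular precisely when each such localization is regular local; note that $p \in \fm$ for every maximal ideal, so $k(\fm) = R/\fm$ always has characteristic $p$. For the forward implication, suppose $R$ is regular. For each maximal $\fm$ the completion $\widehat{R_\fm}$ is a complete regular local ring in which $p$ lies in the maximal ideal. In mixed characteristic, after a faithfully flat local extension that enlarges the residue field to a perfect one (this preserves completeness, regularity, and the property that the maximal ideal is generated by that of the base), Example~\ref{ex:Perfectoid}(4) or (5) produces a faithfully flat perfectoid algebra $B_\fm$ over $\widehat{R_\fm}$ according to whether $p$ is unramified or ramified; in equal characteristic $p$ one simply takes $B_\fm \colonequals (\widehat{R_\fm})_{\perf}$, which is a perfect, hence perfectoid, ring (Example~\ref{ex:Perfectoid}(1)), faithfully flat over $\widehat{R_\fm}$ by Kunz's theorem. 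Then $R \to R_\fm \to \widehat{R_\fm} \to B_\fm$ is a composite of flat maps, so each $B_\fm$ is $R$-flat, and I would set $A \colonequals \prod_{\fm} B_\fm$. This product is perfectoid by Example~\ref{ex:Perfectoid}(8), and it is $R$-flat because $R$ is noetherian and a product of flat modules over a noetherian ring is flat. Faithfulness follows since for each maximal $\fm$ the $\fm$-th component of $A/\fm A$ surjects onto $B_\fm/\fm\widehat{R_\fm}B_\fm$, which is nonzero as $B_\fm$ is faithfully flat over $\widehat{R_\fm}$; hence $R \to A$ is faithfully flat.

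\textbf{Converse.} Fix $R \to A$ with $A$ perfectoid, $\Spec A \to \Spec R$ surjective, and $\Tor^R_i(A,A) = 0$ for $i \gg 0$, and fix a maximal ideal $\fm$. Localizing, $\Tor^{R_\fm}_i(A_\fm, A_\fm) = 0$ for $i \gg 0$; surjectivity of $\Spec A \to \Spec R$ provides a prime of $A$ lying over $\fm$, so $\fm A_\fm \ne A_\fm$; and $A_\fm$ remains perfect modulo a flat ideal, since $A_\fm/\sqrt{pA_\fm} = (A/\sqrt{pA})_\fm$ is a localization of a perfect ring, hence perfect, while $\sqrt{pA_\fm} = (\sqrt{pA})_\fm$ is flat over $A_\fm$ by Lemma~\ref{lem:PerfdFlatDim}. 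Thus Theorem~\ref{thm:perfectoid}, in the generality recorded in Remark~\ref{rem:perfectoid}, applies to the $R_\fm$-algebra $A_\fm$, exactly as in the proof of Corollary~\ref{cor:flat-dim}. Taking $M = A_\fm$ there yields $\Tor^{R_\fm}_j(k(\fm), A_\fm) = 0$ for $j \gg 0$; since $\Tor$ is symmetric this reads $\Tor^{R_\fm}_j(A_\fm, k(\fm)) = 0$ for $j \gg 0$, and a second application of Theorem~\ref{thm:perfectoid} with $M = k(\fm)$ gives $\Tor^{R_\fm}_j(k(\fm), k(\fm)) = 0$ for $j \gg 0$, so that $R_\fm$ is regular. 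As $\fm$ was an arbitrary maximal ideal, $R$ is regular.

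\textbf{Main difficulty.} The homological core of the converse — two applications of Theorem~\ref{thm:perfectoid} — is short once the hypotheses are in place, and the real care lies in two auxiliary points. The first, which I expect to be the subtlest, is that the property ``perfect modulo a flat ideal'' passes to localizations, so that Theorem~\ref{thm:perfectoid} may legitimately be invoked for $A_\fm$ even though $A_\fm$ need not be $p$-adically complete and hence need not be perfectoid on the nose; this is the same mechanism already used for Corollary~\ref{cor:flat-dim}. The second is the bookkeeping in the forward direction: enlarging an imperfect residue field without destroying regularity (needed only in mixed characteristic), and verifying that $\prod_\fm B_\fm$ is simultaneously perfectoid and faithfully flat over $R$, the latter resting on the noetherianity of $R$.
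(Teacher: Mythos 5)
Your proposal is correct and follows essentially the same route as the paper. The converse direction is, after unpacking, identical: the paper funnels both Tor-vanishing steps through its Corollary~\ref{cor:flat-dim}, which itself is just the localize-then-apply-Theorem~\ref{thm:perfectoid} (via Remark~\ref{rem:perfectoid}) argument that you write out directly; and the forward direction is the same gonflement-then-Examples~\ref{ex:Perfectoid}(4)/(5)-then-product construction, with the only organizational difference that the paper first reduces to connected components to split off the characteristic $p$ case globally (taking $A=R_{\perf}$ there), whereas you handle the two characteristics locally at each $\widehat{R_\fm}$ before taking the product, and verify faithfulness via the $A/\fm A\neq 0$ criterion rather than the paper's generalizing-image-of-Spec argument.
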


\begin{proof}
Assume first that there exists an $R$-algebra $A$ with the stated properties. Fix a maximal ideal $\fm$ of $R$. Since $p\in \fm$ holds, by hypothesis,  arguing as in the proof of Corollary~\ref{cor:flat-dim} (with $M=A$) one gets the vanishing of Tor below:
\[
\Tor_{i}^{R}(k(\fm), A) \cong \Tor_{i}^{R_{\fm}}(k(\fm), A_{\fm})=0\quad\text{for all $i\gg 0$}.
\]
The isomorphism holds because the module on the left is $\fm$-local. Since $k(\fm)=R/\fm$, it is a finitely generated $R$-module, so another application of Corollary~\ref{cor:flat-dim}, now with $M=k(\fm)$, implies that $\fd_{R}k(\fm)$ is finite, and hence that $R_{\fm}$ is regular. Since this holds for each maximal ideal of $R$, one deduces that $R$ is regular.

Conversely, assume that $R$ is regular with $p \in J(R)$. We must construct a faithfully flat map $R \to A$ with $A$ perfectoid. It clearly suffices to do this on each connected component of $\mathrm{Spec}(R)$, so we may assume $R$ is a regular domain. When $R$ has characteristic $p$, we may simply take $A = R_{\perf}$ by Kunz's theorem. Assume from now on that $R$ is $p$-torsionfree.

Let us first explain how to construct the required cover when $R$ is complete noetherian regular local ring of mixed characteristic $(0,p)$. By gonflement~\cite[IX, App., Theorem 1, Cor.]{Bourbaki:CA9}, we may assume that the residue field of $R$ is perfect. One can then perform the constructions in Examples~\ref{ex:Perfectoid} (4) and (5) to obtain the required covers.

It remains to globalize. As $R$ is noetherian and $p \in J(R)$,  each completion $\widehat{R_{\fm}}$ at a maximal ideal $\fm$ is flat over $R$ and has mixed characteristic $(0,p)$. By the previous paragraph, for each maximal ideal $\fm \subset R$, we may choose a faithfully flat map  $\widehat{R_{\fm}} \to A(\fm)$ with perfectoid target. Consider the resulting map
\[
R \to \prod_{\fm} \widehat{R_{\fm}} \to \prod_{\fm} A(\fm)\,.
\]
As $R$ is noetherian, an arbitrary product of flat $R$-modules is flat, so the above map is flat. Moreover, it is also faithfully flat: the image of the induced map on $\mathrm{Spec}(-)$ is generalizing (by flatness) and hits all closed points (by construction), and hence must be everything. As a product of perfectoid rings is perfectoid, we have constructed the desired covers.
\end{proof}

In fact, it is possible to give a more precise characterization of regularity in terms of perfectoids than that given in Theorem~\ref{thm:padicKunz}. For instance, it is enough to assume a single $\mathrm{Tor}$-module vanishes in a sufficiently large degree.

\begin{corollary}
\label{cor:regularity}
Let $R$ be a noetherian local ring with residue field $k$, and let $A$ be an $R$-algebra with $\fm A\ne A$. Assume $A$ is perfectoid. If\, $\Tor^{R}_{i}(A, k)=0$ for some integer $i\geq\dim R$ (for example, if $\fd_{R}A$ is finite), then $R$ is regular.
\end{corollary}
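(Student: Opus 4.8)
The plan is to deduce this from Corollary~\ref{cor:smoothable}, taking both the ring $S$ and the module $U$ appearing there to be $A$ itself, and taking for $J$ the radical of a parameter ideal of $R$ extended to $A$, exactly the ideal used in the proof of Theorem~\ref{thm:perfectoid}. First I would record the elementary observation that $p\in\fm$: since $\fm A\neq A$ the ring $A$ is nonzero, and a nonzero $p$-adically complete ring cannot have $p$ invertible; as $R$ is local, $p\notin\fm$ would force $p$ to be a unit in $R$, hence in $A$, a contradiction. In particular $k$ has characteristic $p$, and $p$ lies in $\sqrt{(x_1,\dots,x_n)R}=\fm$ for any system of parameters $x_1,\dots,x_n$ of $R$.

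Next, set $n\colonequals\dim R$, fix a system of parameters $x_1,\dots,x_n$ of $R$, and put $J\colonequals\sqrt{(x_1,\dots,x_n)A}$. This is a radical ideal of $A$ with four properties, each immediate: $\fm A\subseteq J$ (every element of $\fm$ has a power lying in $(x_1,\dots,x_n)R$); $p\in J$ (by the previous paragraph); $J\ov{A}$ is generated up to radicals by the $n$ elements $\ov{x_1},\dots,\ov{x_n}$; and $J\neq A$ (since $(x_1,\dots,x_n)A\subseteq\fm A$ is proper, so is its radical). The second and third of these let me invoke Lemma~\ref{lem:PerfdFlatDim} to conclude $\fd_A(A/J)\le n+1$; in particular $d\colonequals\fd_A(A/J)$ is finite.

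With this in hand, $S=A$, the ideal $J$, and $U=A$ satisfy all the hypotheses needed in Corollary~\ref{cor:smoothable}: $\fm S\subseteq J$, $\fd_S(S/J)<\infty$, and $JU=J\neq A=U$. Since $\Tor^R_i(A,k)=\Tor^R_i(U,k)=0$ for some $i\ge\dim R$ by hypothesis, Corollary~\ref{cor:smoothable} yields that $R$ is regular. The parenthetical case is subsumed: if $\fd_R A$ is finite, then $\Tor^R_i(A,k)=0$ for every $i>\fd_R A$, and any such $i$ satisfying $i\ge\dim R$ triggers the argument.

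I do not expect a serious obstacle; the proof is an assembly of results already established. The one point requiring care, and the reason for not simply taking $J=\fm A$, is the finiteness of $\fd_A(A/J)$: the extended ideal $\fm A$ need not have finite flat dimension over $A$, whereas the radical of a parameter ideal does, by Lemma~\ref{lem:PerfdFlatDim}, precisely because it is generated by finitely many elements up to radicals and contains $p$. Arranging $p\in J$ (equivalently $p\in\fm$) is therefore the one genuinely necessary preliminary.
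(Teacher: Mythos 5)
Your proof is correct and follows essentially the same path as the paper: both reduce to Theorem~\ref{thm:smoothable} with $S=U=A$ and $J$ the radical of a parameter ideal extended to $A$, using Lemma~\ref{lem:PerfdFlatDim} for the flat-dimension bound and the rigidity statement of \ref{ss:rigidity} to upgrade a single vanishing to eventual vanishing. The only cosmetic difference is that you invoke Corollary~\ref{cor:smoothable} directly (which packages rigidity with Theorem~\ref{thm:smoothable}), whereas the paper applies rigidity explicitly and then cites Theorem~\ref{thm:perfectoid}.
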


\begin{proof}
The hypothesis implies $\Tor^{R}_{j}(A,k)=0$ for $j\ge i$; this is by \ref{ss:rigidity}. Then Theorem~\ref{thm:perfectoid} implies $\Tor^{R}_{j}(k,k)=0$ for $j\gg 0$, so $R$ is regular, by \cite[Theorem~2.2.7]{Bruns/Herzog:1998a}.
\end{proof}

In the preceding result, we do not know if the requirement that $i\geq \dim R$ is necessary. Next we prove that, for special $A$, it is not. To this end we recall the notion of a proregular sequence introduced by Greenlees and May~\cite{Greenlees/May:1992a}. The treatment due by Schenzel~\cite{Schenzel:2003a} is better suited to our needs.

\subsection{Proregular sequences}
\label{ss:proregular}
A sequence of elements $\ul x\colonequals x_{1},\dots,x_{d}$ in a ring $S$ is \emph{proregular} if for  $i\colonequals 1,\dots, d$ and integer $m\ge 1$, there exists an integer $n\ge m$ such that
\[
((x_{1}^{n},\dots, x_{i-1}^{n}):_{S}\, x_{i}^{n})\subseteq ((x_{1}^{m},\dots, x_{i-1}^{m}):_{S}\, x_{i}^{n-m})
\]
It is not hard to verify that this property holds if $\ul x$ is a regular sequence, or if $S$ is noetherian; see~\cite[pp.~167]{Schenzel:2003a}. By \cite[Lemma~2.7]{Schenzel:2003a} such a sequence is \emph{weakly proregular}, that is to say, for each $m$, there exists an integer $n\ge m$ such that the canonical map
\[
\HH i{x_{1}^{n},\dots, x_{d}^{n}; S}\lra \HH i{x_{1}^{m},\dots, x_{d}^{m}; S}
\]
on Koszul homology modules is zero for $i\ge 1$.

We care about these notions because of the following observation.

\begin{lemma}
\label{lem:proregular}
Let  $\fa$ be an ideal in a noetherian ring $R$ and $S$ an $R$-algebra.  If $\fa$ can be generated, up to radical, by a sequence whose image in $S$ is weakly progregular, then $\lch{\fa}iI =0$ for $i\ge 1$ and any injective $S$-module $I$.
\end{lemma}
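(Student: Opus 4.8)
The plan is to compute the local cohomology via the extended \v{C}ech complex (equivalently, the stable Koszul complex) and then collapse it to a filtered colimit of Koszul homology modules that the weak proregularity hypothesis annihilates. Fix a sequence $\underline x=x_1,\dots,x_d$ in $R$ with $\sqrt{(\underline x)}=\sqrt{\fa}$ whose image in $S$ is weakly proregular. The $\fa$-torsion functor depends only on $\sqrt{\fa}$, so $\lch{\fa}i{-}=\lch{(\underline x)}i{-}$ and it suffices to treat $\fa=(\underline x)$. Since $R$ is noetherian, $\underline x$ is weakly proregular as a sequence in $R$, so by \cite{Schenzel:2003a} (see also \cite{Greenlees/May:1992a}) the extended \v{C}ech complex $\check{C}^{\bullet}(\underline x)$ over $R$ computes local cohomology with respect to $(\underline x)$; in particular, regarding the injective $S$-module $I$ as an $R$-module,
\[
\lch{(\underline x)}i{I}\;\cong\;H^i\bigl(\check{C}^{\bullet}(\underline x)\otimes_R I\bigr)\,.
\]

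The next step is to transport this computation to $S$. Model $\check{C}^{\bullet}(\underline x)$ as the colimit $\varinjlim_n\Hom_R(\mathbf K_{\bullet}(\underline x^n;R),R)$ of Koszul cochain complexes, where $\mathbf K_{\bullet}(\underline x^n;R)$ is the bounded, degreewise finite free Koszul complex on $x_1^n,\dots,x_d^n$. Using degreewise finite freeness together with the adjunction $\Hom_R(F,I)\cong\Hom_S(F\otimes_R S,I)$ valid for $S$-modules $I$, one obtains a natural isomorphism of complexes
\[
\check{C}^{\bullet}(\underline x)\otimes_R I\;\cong\;\varinjlim_n\Hom_S\bigl(\mathbf K_{\bullet}(\underline x^n;S),\,I\bigr)\,,
\]
where $\mathbf K_{\bullet}(\underline x^n;S)$ is the Koszul complex over $S$ on the $n$-th powers of the images of the $x_i$ and the transition maps are $\Hom_S(-,I)$ applied to the canonical comparison maps $\mathbf K_{\bullet}(\underline x^{n+1};S)\to\mathbf K_{\bullet}(\underline x^{n};S)$. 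As filtered colimits are exact and $I$ is injective over $S$ (so $\Hom_S(-,I)$ commutes with homology), taking $H^i$ gives
\[
\lch{(\underline x)}i{I}\;\cong\;\varinjlim_n\Hom_S\bigl(\HH i{x_1^n,\dots,x_d^n;S},\,I\bigr)\,,
\]
the transition maps now being $\Hom_S(-,I)$ of the canonical maps $\HH i{x_1^{n+1},\dots,x_d^{n+1};S}\to\HH i{x_1^{n},\dots,x_d^{n};S}$ on Koszul homology. Fixing $i\ge 1$, weak proregularity of the image of $\underline x$ in $S$ says exactly that for every $m$ there is an $n\ge m$ for which $\HH i{x_1^n,\dots,x_d^n;S}\to\HH i{x_1^m,\dots,x_d^m;S}$ is zero; applying $\Hom_S(-,I)$, every transition map out of stage $m$ in the colimit above is annihilated by a later transition map, and such a filtered colimit vanishes. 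Hence $\lch{\fa}i{I}=0$ for all $i\ge 1$.

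The short last argument is the crux; the rest is bookkeeping, the only external input being Schenzel's theorem that the extended \v{C}ech complex computes local cohomology for a weakly proregular sequence. The point requiring care---and the reason one cannot simply invoke ``an injective resolution of $I$ has no higher $\fa$-torsion cohomology''---is that $I$ is injective over $S$ but in general not over $R$, and $S$ is not assumed noetherian; threading the computation through the degreewise finite free Koszul complexes is precisely what makes base change along $R\to S$ and the $\Hom$-tensor adjunction available, while weak proregularity over $S$ is what then forces the resulting colimit to vanish.
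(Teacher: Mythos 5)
Your proof is correct and follows the paper's strategy almost exactly: replace $\fa$ by $(\ul x)$ for a suitable sequence $\ul x$, compute $\lch{\fa}{i}{I}$ via the \v{C}ech complex on $\ul x$ over $R$ (legitimate since $R$ is noetherian), transport the computation to $S$ using that $I$ is an $S$-module, and then use weak proregularity of $\ul x S$ over $S$ to force the vanishing. The only divergence is that the paper's proof cites \cite[Theorem~3.2]{Schenzel:2003a} for the final step, whereas you reprove it on the spot by modeling the \v{C}ech complex as a colimit of dualized Koszul complexes, passing through the $\Hom$--tensor adjunction, and observing that weak proregularity makes the resulting filtered colimit of groups $\Hom_S(\HH{i}{\ul x^n; S}, I)$ vanish for $i \ge 1$ --- a welcome bit of self-containedness, but the same underlying mechanism.
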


\begin{proof}
By hypothesis, there exists a sequence $\ul x$ in $R$ such that $\sqrt{\ul x}=\sqrt {\fa}$ and ${\ul x}S$, the image of the sequence $\ul x$ in $S$, is weakly proregular. Let $C$ be the \v{C}ech complex on $\ul x$, so that $\lch{\fa}iM =\HH i{C\otimes_{R}M}$ for any $R$-module $M$; see, for example, \cite[Theorem~3.5.6]{Bruns/Herzog:1998a}. Since $I$ is an $S$-module has one $C\otimes_{R}I\cong (C\otimes_{R}S)\otimes_{S}I$. Since $C\otimes_{R}S$ is the \v{C}ech complex on ${\ul x}S$, the desired result then follows from ~\cite[Theorem~3.2]{Schenzel:2003a}.
\end{proof}

\subsubsection{Absolute integral closure}
Given a domain $R$, its absolute integral closure (that is to say, the its integral closure in an algebraic closure of it field of fractions), is denoted $R^{+}$. When $R$ is of positive characteristic,  $R^{+}$ contains a subalgebra isomorphic to $R_{\perf}$.

When $R$ has mixed characteristic, with residual characteristic $p$, the ideal $(p^{1/p^{\infty}})R^{+}$ is flat, and the quotient  ring $R^{+}/(p^{1/p^{\infty}})R^{+}$ is of characteristic $p$ and perfect. In the light of Remark~\ref{rem:perfectoid}, it follows that the conclusion of Corollary~\ref{cor:regularity} also holds when $R$ has mixed characteristic and the $R$-algebra $A$ factors through $R^{+}$.

\begin{proposition}
\label{prp:proregular}
Let $\ul x$ be a system of parameters in an excellent local domain $R$.

If $R$ has positive characteristic, then $\ul x$ is weakly proregular in $R_{\perf}$ and in $R^{+}$.

If $R$ has mixed characteristic and $\dim R\leq 3$, then $\ul x$ is weakly proregular in $R^{+}$.
\end{proposition}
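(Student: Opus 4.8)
The plan is to reduce each of the three assertions to showing that the Koszul‑homology pro‑system $\{\HH{i}{x_{1}^{n},\dots,x_{d}^{n};S}\}_{n}$ is pro‑zero for $i\ge 1$, where $S$ is $R_{\perf}$ or $R^{+}$, and to use that weak proregularity of a sequence in $S$ depends only on the radical of the ideal it generates. Since $\ul x$ is a system of parameters, $\sqrt{(\ul x)R}=\fm$, so throughout we may replace $\ul x$ by any other system of parameters of $R$ — for instance one containing $p$ in the mixed characteristic case — and deduce weak proregularity of $\ul x$ in $S$ from that of any convenient choice. Below I write $\ul x^{n}$ for $x_{1}^{n},\dots,x_{d}^{n}$.

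The case of $R^{+}$ in positive characteristic is immediate from the Hochster–Huneke theorem that $R^{+}$ is a big Cohen–Macaulay $R$‑algebra: it provides a system of parameters of $R$ that is a regular sequence on $R^{+}$ (and $\fm R^{+}\ne R^{+}$), and by radical‑invariance we may take it to be $\ul x$. As powers of a regular sequence are again regular, $\ul x^{n}$ is regular on $R^{+}$ for each $n$, so $\HH{i}{\ul x^{n};R^{+}}=0$ for all $i\ge 1$; a fortiori the pro‑system is pro‑zero (indeed $\ul x$ is proregular on $R^{+}$). The same argument handles $R^{+}$ in mixed characteristic with $\dim R\le 3$, once one knows that some system of parameters of $R$ is a regular sequence on $R^{+}$: when $\dim R\le 2$ this is classical — one reduces a colon relation to a module‑finite normal, hence Cohen–Macaulay, extension of $R$ inside $R^{+}$ — and when $\dim R=3$ it is Heitmann's colon‑capturing theorem in mixed characteristic. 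This is exactly where the hypothesis $\dim R\le 3$ enters: unlike in equal characteristic, Cohen–Macaulayness of the non‑$p$‑completed ring $R^{+}$ is not available in all dimensions.

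The substantive case is $R_{\perf}$ in positive characteristic, where no regular‑sequence shortcut is available: $R_{\perf}$ is flat over $R$ — and hence Cohen–Macaulay when $R$ is — only when $R$ is regular. Here I would argue by approximation along Frobenius. Writing $R_{\perf}=\varinjlim_{e}R^{1/p^{e}}$, each $R^{1/p^{e}}$ is abstractly isomorphic to $R$ — hence Noetherian — via an isomorphism carrying $\ul x$ to $\ul x^{p^{e}}$. Koszul homology commutes with filtered colimits, and base change of the Koszul complex along $R^{1/p^{e}}\cong R$ identifies $K(\ul x^{n};R^{1/p^{e}})$ with $K(\ul x^{np^{e}};R)$ compatibly with the standard transition maps; hence
\[
\HH{i}{\ul x^{n};R_{\perf}}\ \cong\ \varinjlim_{e}\HH{i}{\ul x^{np^{e}};R},
\]
and the map to $\HH{i}{\ul x^{m};R_{\perf}}$ is level by level the standard map $\HH{i}{\ul x^{np^{e}};R}\to\HH{i}{\ul x^{mp^{e}};R}$. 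The key input is the uniform Artin–Rees theorem of Huneke — this is where excellence of $R$ is used — which provides a constant $c$, depending only on $R$ and $\ul x$, such that $\HH{i}{\ul x^{N+c};R}\to\HH{i}{\ul x^{N};R}$ is zero for every $N$ and every $i\ge 1$. Because this bound is additive in $N$, it survives replacing $N$ by $mp^{e}$: taking $n=m+c$ forces $np^{e}\ge mp^{e}+c$, so every transition map in the colimit above vanishes, and therefore so does the colimit. Thus $\ul x$ is weakly proregular on $R_{\perf}$.

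The step I expect to be the main obstacle is this last one — arranging that the Koszul pro‑system over the Noetherian ring $R$ is pro‑zero with a transition index of the additive (or at worst linear) form $n\mapsto n+c$, rather than merely pro‑zero with an a priori uncontrolled bound that might not survive multiplication of exponents by $p^{e}$; it is for this uniformity that one must appeal to uniform Artin–Rees, and hence to the excellence hypothesis. In the mixed characteristic case the analogous difficulty is absorbed into the input on Cohen–Macaulayness of $R^{+}$, which is why the statement there is confined to $\dim R\le 3$.
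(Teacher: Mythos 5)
Your treatment of $R^{+}$ in positive characteristic agrees with the paper's: both rest on the Hochster--Huneke big Cohen--Macaulay theorem, and that case is fine. The problems are in the other two cases.

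For $R^{+}$ in mixed characteristic with $\dim R=3$, you assert that Heitmann's theorem shows some system of parameters is a \emph{regular} sequence on $R^{+}$, and then conclude as in the positive characteristic case. That is not what Heitmann proves, and the gap is not cosmetic. Heitmann's colon-capturing result says: if $zp^{n}\in(x^{n},y^{n})R^{+}$, then $p^{\varepsilon}z\in(x^{n},y^{n})R^{+}$ for every rational $\varepsilon>0$. This is an \emph{almost} statement; it does not let you conclude $z\in(x^{n},y^{n})R^{+}$, i.e.\ it does not show that $p$ is a nonzerodivisor modulo $(x^{n},y^{n})R^{+}$. Indeed, $R^{+}$ was not known to be a big Cohen--Macaulay algebra in mixed characteristic (even in dimension three) when this paper was written. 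The paper instead exploits the weaker colon-capturing just as it stands: choose the s.o.p.\ $x,y,p$, note that $x,y$ is regular on $R^{+}$ because $R^{+}$ is normal, and then verify the \emph{proregularity} colon containment
\[
\bigl((x^{n},y^{n}):_{R^{+}}p^{n}\bigr)\subseteq\bigl((x^{m},y^{m}):_{R^{+}}p^{n-m}\bigr)
\]
directly, by taking $\varepsilon=1$ in Heitmann's theorem (so $pz\in(x^{n},y^{n})R^{+}\subseteq(x^{m},y^{m})R^{+}$, hence $p^{n-m}z\in(x^{m},y^{m})R^{+}$). Proregularity is strictly weaker than regularity of the sequence, and the argument needs to aim at the weaker target.

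For $R_{\perf}$, your strategy is genuinely different from the paper's and the Frobenius bookkeeping (the identification $\HH i{\ul x^{n};R_{\perf}}\cong\varinjlim_{e}\HH i{\ul x^{np^{e}};R}$ with transition maps computed level-by-level) is correct. However, the pivotal claim --- that Huneke's uniform Artin--Rees theorem supplies a single constant $c$ with $\HH i{\ul x^{N+c};R}\to\HH i{\ul x^{N};R}$ zero for every $N$ and every $i\ge 1$ --- is asserted without justification, and it is not a direct consequence of the uniform Artin--Rees theorem, whose uniformity runs over the \emph{ideal} $I$ rather than over the Koszul pro-system of a fixed sequence. You do correctly identify where the difficulty lies (one needs a \emph{linear} bound, not just pro-vanishing, so that it survives scaling by $p^{e}$), but you stop short of actually establishing it. The paper avoids this issue entirely: excellence enters via Kawasaki's Macaulayfication theorem, which produces a $p$-standard s.o.p.\ (a strong $d$-sequence), and for such a sequence the proregularity colon inclusions over $R_{\perf}=\bigcup R^{1/p^{e}}$ can be checked by an explicit elementary computation. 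If you want to carry through your approach, you would need to supply a proof (or a precise reference) for the uniform linear bound on Koszul pro-vanishing; as written, that step is a gap.
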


\begin{proof}
By \cite[Corollary~3.3]{Schenzel:2003a}, it suffices to verify that there is some choice of an s.o.p. that is (weakly) proregular  on $R_{\perf}$, or $R^{+}$, as the case maybe.

We treat first the case where $R$ has positive characteristic. In this case, $R^{+}$ is a balanced big Cohen-Macaulay algebra, as proved by Hochster and Huneke~\cite[Theorem~1.1]{Hochster/Huneke:1992a}; see also Huneke and Lyubeznik~\cite[Corollary~2.3]{Huneke/Lyubeznik:2007a}. Thus any s.o.p. for $R$, in particular, $\ul x$ is a regular sequence, and hence also a  (weakly) proregular sequence, in $R^+$.

As to $R_{\perf}$: Since $R$ is excellent, it is a homomorphic image of an excellent Cohen-Macaulay local ring \cite[Corollary 1.2]{Kawasaki:2002a} and hence it admits a $p$-standard s.o.p.~\cite[Definition 2.1 and Theorem 1.3]{Cuong/Cuong:2017a}. In particular, $R$ admits an s.o.p.  $\ul x\colonequals  x_1,\dots,x_d$ such that
\[
((x_1^{n_1},\dots,x_{i-1}^{n_{i-1}}):_{R}\, x_i^{n_i}x_j^{n_j})= ((x_1^{n_1},\dots,x_{i-1}^{n_{i-1}}):_{R}x_j^{n_j})
\qquad\text{for all $1\leq i\leq j\leq d$.}
\]
In other words,  $\ul x$ is a strong $d$-sequence; see \cite[Definition~5.10]{Huneke:1996a}. We claim that such an $\ul x$ satisfies
\[
((x_1^n,\dots,x_{i-1}^n):_{R_{\perf}}\, x_i^{n}) \subseteq ((x_1^m,\dots,x_{i-1}^m):_{R_{\perf}}\, x_i^{n-m})
\qquad \text{for all $m\ge 1$ and $n>m$.}
\]
To this end, since $R_{\perf}=\cup\, R^{1/p^e}$, it suffices to prove that
\[
((x_1^{p^en},\dots,x_{i-1}^{p^en}):_{R}\, x_i^{p^en}) \subseteq ((x_1^{p^em},\dots,x_{i-1}^{p^em}):_{R}\, x_i^{p^en-p^em}).
\]
But the conditions on $\underline{x}$ imply
\begin{align*}
((x_1^{p^en},\dots,x_{i-1}^{p^en}):_{R}\, x_i^{p^en})
	& =((x_1^{p^en},\dots,x_{i-1}^{p^en}):_{R}\, x_i) \\
	&\subseteq ((x_1^{p^em},\dots,x_{i-1}^{p^em}):_{R}x_i)\\
	&=((x_1^{p^em},\dots,x_{i-1}^{p^em}):_{R}x_i^{p^en-p^em})
\end{align*}
This proves $\underline{x}$ is proregular, and hence also weakly proregular, on $R_{\perf}$, as desired.

Suppose $R$ is of mixed characteristic. When $\dim R\le 2$ once again $R^{+}$ is a big Cohen-Macaulay algebra, so the result follows.
Assume $\dim R=3$ and choose an s.o.p.\ of the form $x,y,p$. Since $R^{+}$ is normal, $x,y$ is a regular sequence on it. It thus suffices to verify:
\[
((x^n, y^n)\colon_{R^+}p^n) \subseteq ((x^m, y^m)\colon_{R^+}p^{n-m})\quad\text{for all $m\ge 1$ and $n>m$}.
\]
Assume $zp^n\in (x^n, y^n)R^+$. It follows from a result of Heitmann's~\cite[Theorem~0.1]{Heitmann:2005a}, see also \cite{Heitmann:2002a}, that  $p^\epsilon z\in (x^{n},y^{n})R^{+}$ for \emph{any} rational number $\epsilon$. In particular, $pz\in (x^n,y^n)R^+$ and this implies the desired inclusion.
\end{proof}

\begin{corollary}
\label{cor:proregular}
Let $(R,\fm,k)$ be an excellent local domain and $E$ the injective hull of $k$.
\item
If $R$ has positive characteristic, then
\begin{gather*}
 \lch{\fm}i{\Hom_R(R^+, E)}=0=\lch {\fm}i{\Hom_R(R_{\perf}, E)} \qquad \text{for each $i\ge 1$.}\\
\intertext{When $R$ has mixed characteristic and $\dim R\leq 3$, one has}
\lch {\fm}i{\Hom_R(R^{+}, E)}=0 \qquad \text{for each $i\ge 1$.}
\end{gather*}
\end{corollary}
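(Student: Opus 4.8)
The plan is to obtain this as an immediate consequence of Lemma~\ref{lem:proregular} and Proposition~\ref{prp:proregular}. The one input not yet recorded is the standard fact that coinduction along a ring map preserves injectivity: for any $R$-algebra $S$ and any injective $R$-module $I$, the $S$-module $\Hom_R(S,I)$ is injective, since the functor $\Hom_S(-,\Hom_R(S,I))$ on $S$-modules is naturally isomorphic to the exact functor $\Hom_R(-,I)$ by Hom-tensor adjunction. Applying this with $I=E$ and $S$ equal to $R^{+}$ or $R_{\perf}$ shows that $\Hom_R(R^{+},E)$ and $\Hom_R(R_{\perf},E)$ are injective as modules over $R^{+}$ and $R_{\perf}$, respectively.

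Next I would fix a system of parameters $\ul x$ of $R$; it generates $\fm$ up to radical. By Proposition~\ref{prp:proregular}, the image of $\ul x$ is weakly proregular in $R_{\perf}$ and in $R^{+}$ when $R$ has positive characteristic, and in $R^{+}$ when $R$ has mixed characteristic and $\dim R\le 3$. In each case Lemma~\ref{lem:proregular}, applied with $\fa=\fm$ and $S$ the relevant algebra, gives $\lch{\fm}iI=0$ for all $i\ge 1$ and every injective $S$-module $I$. Taking $I$ to be $\Hom_R(R^{+},E)$ or $\Hom_R(R_{\perf},E)$ delivers exactly the claimed vanishing.

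Since both Lemma~\ref{lem:proregular} and Proposition~\ref{prp:proregular} are already available, there is no genuine obstacle here: the argument is essentially bookkeeping, and the only step deserving an explicit sentence is the injectivity of $\Hom_R(S,E)$ over $S$ used in the first paragraph.
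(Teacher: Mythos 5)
Your proof is correct and follows the same route as the paper: use Hom-tensor adjunction to see that $\Hom_R(R^{+},E)$ and $\Hom_R(R_{\perf},E)$ are injective over $R^{+}$ and $R_{\perf}$ respectively, then combine Proposition~\ref{prp:proregular} with Lemma~\ref{lem:proregular}. You simply spell out the adjunction step in a bit more detail than the paper does.
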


\begin{proof}
By adjunction, the $R^{+}$-module $\Hom_R(R^+, E)$  and the $R_{\perf}$-module $\Hom_R(R_{\perf}, E)$ are injective. Thus the desired result follows from Proposition~\ref{prp:proregular} and Lemma~\ref{lem:proregular}.
\end{proof}

Aberbach and Li~\cite[Corollary~3.5]{Aberbach/Li:2008a} have proved parts (1) and (2) of the following result, using different methods.

\begin{theorem}
\label{thm:regularity}
Let $(R,\fm,k)$ be an excellent local domain. Then $R$ is regular if any one of the following conditions.
\begin{enumerate}[\quad\rm(1)]
\item
$R$ has positive characteristic and  $\Tor_{i}^{R}(R_{\perf},k)=0$ for some integer $i\ge 1$;
\item
$R$ has positive characteristic and $\Tor_{i}^{R}(R^{+},k)=0$ for some integer $i\ge 1$;
\item
$R$ has mixed characteristic, $\dim R\leq 3$, and $\Tor_{i}^{R}(R^{+},k)=0$ for some $i\ge 1$.
\end{enumerate}
\end{theorem}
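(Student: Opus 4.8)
The plan is to handle all three cases through a single mechanism. In case (1) set $U\colonequals R_{\perf}$, and in cases (2) and (3) set $U\colonequals R^{+}$; in each case let $i\ge 1$ be an integer with $\Tor^{R}_{i}(U,k)=0$. The guiding idea is that for these particular algebras systems of parameters behave especially well, which forces the invariant $s(U)$ of \ref{ss:rigidity} to be nonpositive; combined with the rigidity statement this lets me bootstrap the single assumed vanishing into vanishing of $\Tor^{R}_{j}(U,k)$ for all $j\gg 0$, after which Theorem~\ref{thm:smoothable} (in the guise of Theorem~\ref{thm:perfectoid}) finishes the job.

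Concretely, first I would record that $U$ is a domain that is integral over $R$, so $\fm U\ne U$ by lying over. Next I would invoke Corollary~\ref{cor:proregular}: because $R$ is an excellent local domain, $\lch{\fm}t{\Hom_{R}(U,E)}=0$ for all $t\ge 1$ — and it is precisely here, in case (3), that the hypothesis $\dim R\le 3$ gets consumed (through Proposition~\ref{prp:proregular} and Heitmann's theorem). In the language of \ref{ss:rigidity} this says exactly that $s(U)\le 0$. Since $i\ge 1>0\ge s(U)$, part (2) of \ref{ss:rigidity} applies and gives $\Tor^{R}_{j}(U,k)=0$ for all $j\ge i$, hence for $j\gg 0$.

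To conclude I would check that $U$ is ``perfect modulo a flat ideal,'' so that the argument behind Theorem~\ref{thm:perfectoid} applies with this $U$. In cases (1) and (2), $k$ has characteristic $p$ and $U$ is literally a perfect ring of characteristic $p$ — for $R_{\perf}$ by construction, and for $R^{+}$ because an absolutely integrally closed domain has bijective Frobenius — hence $U$ is perfectoid by Example~\ref{ex:Perfectoid}(1), and Theorem~\ref{thm:perfectoid} applies directly to yield $\Tor^{R}_{j}(k,k)=0$ for $j\gg 0$. In case (3), $R^{+}$ itself is not perfectoid, but $(p^{1/p^{\infty}})R^{+}$ is a flat ideal with perfect quotient $R^{+}/(p^{1/p^{\infty}})R^{+}$ of characteristic $p$ (as recalled in \S\ref{ss:proregular}); so, taking $J\subset R^{+}$ to be the radical of the ideal generated by a system of parameters of $R$ that contains $p$, I would bound $\fd_{R^{+}}(R^{+}/J)$ using Lemma~\ref{lem:hochster} for the perfect ring $R^{+}/(p^{1/p^{\infty}})R^{+}$ together with transitivity of flat dimension across the flat ideal $(p^{1/p^{\infty}})R^{+}$, note $\fm U\subseteq J$ and $JU\ne U$, and then apply Theorem~\ref{thm:smoothable} with $S=U$ (compare Remark~\ref{rem:perfectoid}) to again obtain $\Tor^{R}_{j}(k,k)=0$ for $j\gg 0$. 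In all three cases $R$ is then regular by \cite[Theorem~2.2.7]{Bruns/Herzog:1998a}.

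The main obstacle lies entirely in the inputs rather than in the formal assembly above: it is the assertion $s(U)\le 0$, i.e. Corollary~\ref{cor:proregular}, and behind it Proposition~\ref{prp:proregular}, namely that a system of parameters of an excellent local domain is weakly proregular on $R_{\perf}$ and on $R^{+}$. That rests on the big Cohen--Macaulay property of $R^{+}$ in characteristic $p$ (Hochster--Huneke), on the existence of $p$-standard systems of parameters / strong $d$-sequences in the excellent setting for the $R_{\perf}$ statement, and on Heitmann's results in mixed characteristic — which is exactly why case (3) is restricted to $\dim R\le 3$. Once these are granted, the rigidity result \ref{ss:rigidity} and Theorem~\ref{thm:smoothable} carry the rest of the argument with no further difficulty.
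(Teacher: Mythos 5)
Your proposal is correct and follows essentially the same route as the paper: invoke Corollary~\ref{cor:proregular} together with the rigidity of \ref{ss:rigidity} to upgrade the single vanishing $\Tor^R_i(U,k)=0$ to $\Tor^R_j(U,k)=0$ for all $j\gg 0$, then feed this into the mechanism of Theorem~\ref{thm:perfectoid}/Theorem~\ref{thm:smoothable} (noting $U$ is perfect, resp.\ perfect modulo a flat ideal). The only cosmetic deviation is that for case (2) you observe $R^+$ is itself perfect, whereas the paper instead notes that $R^+$ contains $R_{\perf}$ and appeals to Remark~\ref{rem:perfectoid}; both justifications work.
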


\begin{proof}
In all cases, it follows from Corollary \ref{cor:proregular} and  \ref{ss:rigidity} that $\Tor_{j}^{R}(R^{+},k)$, respectively, $\Tor_{j}^{R}(R_{\perf},k)$, is zero for each $j\ge i$.  When $R$ has positive characteristic, $R^{+}$ contains $R_{\perf}$; in mixed characteristic, $R^{+}$ is perfect modulo a flat ideal. Therefore, in either case  Theorem~\ref{thm:perfectoid}---see also Remark~\ref{rem:perfectoid}---implies $\Tor_{j}^{R}(k,k)=0$ for $j \gg 0$ as desired.
\end{proof}

Here is a question suggested by part (3) above: If  $(R,\fm, k)$ is a noetherian local domain of characteristic $0$ and $\Tor_{i}^{R}(R^{+},k)=0$ for some $i\ge 1$, then is $R$ regular?

\section{Almost flatness}
\label{sec:almost}
The goal of this section is to prove, for rings of mixed characteristic, the variations of Theorems~\ref{thm:perfectoid} and ~\ref{thm:padicKunz} where the vanishing of Tor and the flatness hypotheses are relaxed to almost conditions. As before, throughout this section we fix a prime $p$; the notion of perfectoid is with respect to this prime. A module over a perfectoid ring $A$ \emph{almost zero} if it is killed by $\sqrt{pA}$; a map $R\to A$ is \emph{almost flat} if $\mathrm{Tor}_i^R(-,A)$ is almost zero for each $i \ge 1$.

In what follows we will consider maps $R\to A$ with $R$ noetherian and $p$-torsion free, and $A$ perfectoid, satisfying the following:

\subsection{Valuative condition}
\label{ss:valuative}
For every map $R \to V$ with $V$ a $p$-torsionfree and $p$-adically complete  rank $1$ valuation ring, there exists an extension $V \to W$ of $p$-torsionfree and $p$-adically complete  rank $1$ valuation rings and a map $A \to W$ extending $R \to V \to W$.

See Remark~\ref{rem:generic-points} for an alternative description of this condition, and Proposition~\ref{prp:AFFSpa} for a sufficient, and perhaps easier to verify, condition under which it holds.

\medskip

Compare the result below with Theorem~\ref{thm:perfectoid}, and also Remark~\ref{rem:oneless}.

\begin{theorem}
\label{thm:almostperfectoid}
Let $(R,\fm,k)$ be a noetherian local ring that is $p$-torsionfree and $p\in \fm$ holds. Let $R \to A$ be a map with $A$ perfectoid and satisfying the valuative condition~\ref{ss:valuative}.

If $M$ is an $R$-module for which the $A$-module $\Tor^{R}_{i}(A,M)$ is almost zero for  each integer $s\le i\le  s+\dim R$, for some  $s\ge 0$, then
\[
\Tor^{R}_{s+\dim R+1}(k,M)=0\,.
\]
In particular, if $M=k$, then the ring $R$ is regular.
\end{theorem}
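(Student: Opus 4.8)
The plan is to run the proof of Theorem~\ref{thm:perfectoid} through Theorem~\ref{thm:smoothable} in the sharper $p$-torsionfree form of Remark~\ref{rem:oneless}, using the valuative condition to bridge the gap between ``almost zero'' and ``zero'' at the price of one homological degree. I would begin with two reductions. Since $A$ is reduced and $\varpi^p=pu$ with $u$ a unit, the $p$-power torsion $A[p^\infty]$ is killed by $\sqrt{pA}$: if $p^Nx=0$ then $\varpi^{pN}x=0$, so $(\varpi^{1/p^e}x)^{p^{e+1}N}=\varpi^{pN}x^{p^{e+1}N}=0$ and hence $\varpi^{1/p^e}x=0$ for all $e$. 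Thus $A_{t\!f}\colonequals A/A[p^\infty]$ is a $p$-torsionfree perfectoid ring (cf.\ \cite[Remark~8.8]{Lau:2016a}), the composite $R\to A_{t\!f}$ still satisfies the valuative condition~\ref{ss:valuative} because any map from $A$ to a $p$-torsionfree ring factors through $A_{t\!f}$, and, as $\sqrt{pA}$ is idempotent, an extension of almost zero modules is almost zero, so $\Tor^R_i(A_{t\!f},M)$ is almost zero over the same range of $i$. Replacing $A$ by $A_{t\!f}$ we may assume $A$ is $p$-torsionfree; then $\varpi$ and each $\varpi^{1/p^e}$ is a nonzerodivisor and $\sqrt{pA}=(\varpi^{1/p^\infty})$. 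Secondly, I would deduce $\fm A\ne A$ from~\ref{ss:valuative}: picking a minimal prime $\fq$ of $R$ (necessarily $p\notin\fq$, since $p$ is a nonzerodivisor), a DVR $V_0$ dominating the local domain $R/\fq$ --- e.g.\ one arising from a blow-up of its maximal ideal --- satisfies $p\in\fm_{V_0}$, and its $p$-adic completion $V$ is a valuation ring as in~\ref{ss:valuative}; the resulting $A\to W$ carries $\fm$ into $\fm_W\subsetneq W$, which forces $\fm A\ne A$.

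Next set $n\colonequals\dim R$ and, using that $p$ is a nonzerodivisor in $\fm$, extend $p$ to a system of parameters $p=x_1,x_2,\dots,x_n$. Put $J\colonequals\sqrt{(x_1,\dots,x_n)A}=\sqrt{\fm A}$; then $\fm A\subseteq J$, the ideal $J$ is radical and contains $p$, and $J\ov A=\sqrt{(x_2,\dots,x_n)\ov A}$ is generated up to radicals by the $n-1$ elements $x_2,\dots,x_n$, so Lemma~\ref{lem:PerfdFlatDim} gives $\fd_A(A/J)\le n$. The key elementary point is the honest vanishing
\[
\Tor^R_i(\sqrt{pA},M)=0\qquad\text{for }s\le i\le s+n.
\]
Indeed $\sqrt{pA}=\varinjlim_e(\varpi^{1/p^e})$, each $(\varpi^{1/p^e})$ is free of rank one over $A$, and the transition maps are multiplication by $\varpi^{1/p^e-1/p^{e+1}}\in\sqrt{pA}$; hence $\Tor^R_i(\sqrt{pA},M)$ is a colimit of copies of $\Tor^R_i(A,M)$ along maps that are multiplication by elements of $\sqrt{pA}$, and these are zero for $i$ in the stated range, so the colimit vanishes.

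It remains to pass from this to $\Tor^R_{s+n+1}(k,M)=0$. Feeding the exact sequence $0\to\sqrt{pA}\to A\to\ov A\to 0$ into the previous step identifies $\Tor^R_i(A,M)$ with $\Tor^R_i(\ov A,M)$ for $s+1\le i\le s+n$, so the remaining data now lives over the perfect ring $\ov A$, where ``almost zero'' is vacuous. Here I would invoke the valuative condition~\ref{ss:valuative} to upgrade this almost vanishing to an honest vanishing of $\Tor^R_i(U,M)$ in degrees $s+1\le i\le s+n+1$ for a suitable $A$-module $U$ with $JU\ne U$: the mechanism I expect is that a $p$-torsionfree valuation ring carries no nontrivial almost zero modules, so transporting the almost zero groups $\Tor^R_i(A,M)$ along the maps $A\to W$ supplied by~\ref{ss:valuative} converts ``almost zero'' into ``zero'', the single lost degree being the one dropped in the long exact sequence of $0\to\sqrt{pA}\to A\to\ov A\to 0$. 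Then Theorem~\ref{thm:smoothable}, applied with $S=A$, this $U$, the ideal $J$ above, and $d=n$, yields $\Tor^R_{s+n+1}(k,M)=0$. I expect this last upgrade --- making precise how~\ref{ss:valuative} produces honest vanishing from almost vanishing and identifying the correct $U$ --- to be the main obstacle.

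Finally, for $M=k$ this reads $\Tor^R_{s+n+1}(k,k)=0$ with $s+n+1>n\ge s(k)$, so the rigidity recorded in~\ref{ss:rigidity} gives $\Tor^R_j(k,k)=0$ for all $j\ge s+n+1$, whence $\fd_Rk<\infty$ and $R$ is regular by \cite[Theorem~2.2.7]{Bruns/Herzog:1998a}.
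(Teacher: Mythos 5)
Your opening reductions are sound and closely parallel the paper's: showing that $A[p^\infty]$ is killed by $\sqrt{pA}$ (so passing to $A_{t\!f}$ preserves the hypotheses), deducing $\fm A\ne A$ from the valuative condition, and noting $\fd_A(A/J)\le n$ with $J=\sqrt{\fm A}$ are all correct. Your key new observation is also correct and is not in the paper: writing $\sqrt{pA}=\varinjlim_e(\varpi^{1/p^e})$ exhibits $\sqrt{pA}$ as a colimit of free rank-one $A$-modules with transition maps given by multiplication by elements of $\sqrt{pA}$, so the almost-zero hypothesis forces $\Tor^R_i(\sqrt{pA},M)=0$ on the nose for $s\le i\le s+\dim R$, and the long exact sequence then gives $\Tor^R_i(A,M)\cong\Tor^R_i(\ov A,M)$ for $s+1\le i\le s+\dim R$.

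Here the argument stalls, and you flag the gap yourself. The isomorphism conveys nothing new: every $\ov A$-module is automatically killed by $\sqrt{pA}$, so ``almost zero'' is vacuous on that side and the identification does not produce honest vanishing. You also cannot take $U=\sqrt{pA}$, since $J\sqrt{pA}=\sqrt{pA}$ because $\sqrt{pA}\subseteq J$ is idempotent. The mechanism you sketch --- transporting the groups $\Tor^R_i(A,M)$ along $A\to W$ supplied by the valuative condition --- would need $\Tor^R_i(-,M)$ to commute with the base change $A\to W$, which it does not. The valuative condition, as the paper uses it, never upgrades almost vanishing of a $\Tor$-module to honest vanishing; rather it refutes an ideal containment.

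The paper's actual route is an argument by contradiction and is considerably heavier. Assuming $\Tor^R_{s+d+1}(k,M)\ne 0$, it forms the Koszul complexes $A_n=\mathrm{K}(g_1^{1/p^n},\dots,g_d^{1/p^n};A)$ on compatible $p$-power roots of the parameters, shows $\Tor^R_{s+d+1}(A_n,M)$ is almost zero, and then uses a semifree dg $R$-algebra resolution $R[X]$ of $k$, lifted through some $A_n$, to obtain a K\"unneth decomposition $\Tor^R(A_n,M)\cong\hh{A_n}\otimes_k\Tor^R(k,M)$. The upshot is that $\HH 0{A_n}=A/(g_1^{1/p^n},\dots,g_d^{1/p^n})$ is a direct summand of an almost-zero module, hence itself almost zero, which translates into $(g_1^{1/p^\infty})\subseteq(f_1,\dots,f_d)A$; it is this ideal containment that the valuative condition refutes, because a finitely generated nonunit ideal in a rank~$1$ valuation ring cannot contain elements of arbitrarily small valuation. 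The dg-algebra/K\"unneth step, which is what converts the almost-vanishing of $\Tor$ into an almost-vanishing of an explicit quotient of $A$, is precisely the ingredient your outline lacks; without it, the ``upgrade'' you want does not go through.
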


The proof of this result, given further below, is a little more involved than that of Theorem~\ref{thm:perfectoid}. It will be clear from the proof that there is a version of the preceding result where almost zero is measured with respect to some fixed nonzero divisor in $A$, not necessarily $p$, that admits a compatible system of $p$-power roots. Moreover, it suffices that condition~\ref{ss:valuative} holds for noetherian valuations $V$ that dominate the maximal ideal $\fm$.

Here is an analogue of Theorem~\ref{thm:padicKunz}.

\begin{theorem}
\label{thm:almostpadicKunz}
Let $R$ be a noetherian $p$-torsionfree ring such that $p$ lies in its Jacobson radical. Let $R \to A$ be a map with $A$ perfectoid and satisfying the valuative condition~\ref{ss:valuative}. If $R\to A$ is almost flat, then $R$ is regular.
\end{theorem}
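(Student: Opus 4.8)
The plan is to reduce Theorem~\ref{thm:almostpadicKunz} to Theorem~\ref{thm:almostperfectoid} by the same localization-and-globalization argument used for Theorem~\ref{thm:padicKunz}. First I would fix a maximal ideal $\fm$ of $R$; since $p$ lies in the Jacobson radical, $p \in \fm$, and the local ring $R_{\fm}$ is noetherian and $p$-torsionfree. The goal is to show $R_{\fm}$ is regular for every such $\fm$, which gives regularity of $R$ since regularity is a local property.

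The key steps, in order. (i) Form the completed localization $\widehat{A_{\fm}}$, which is again perfectoid by Example~\ref{ex:Perfectoid} (7) (completed localizations of perfectoid rings are perfectoid), and record that the map $R_{\fm} \to \widehat{A_{\fm}}$ is again almost flat: base change along the flat map $R \to R_{\fm}$ preserves the vanishing of $\Tor^{R}_{i}(-,A)$ up to almost zero, and $p$-completion on the perfectoid side does not affect the almost-vanishing of Tor since the relevant modules are already finitely built or handled by standard almost-flatness-under-completion bookkeeping. (ii) Check that $R_{\fm} \to \widehat{A_{\fm}}$ inherits the valuative condition~\ref{ss:valuative}: a map $R_{\fm} \to V$ to a $p$-torsionfree $p$-complete rank $1$ valuation ring restricts to $R \to V$ (note $p \in \fm$ forces the composite $R \to R_{\fm} \to V$ to land in $V$ with $p$ a nonunit, so this is a legitimate test object), apply the valuative condition for $R \to A$ to get $V \to W$ and $A \to W$, and then observe that $A \to W$ factors through the completed localization $\widehat{A_{\fm}}$ because $W$ is $p$-adically complete and the elements of the multiplicative set defining $A_{\fm}$ that lie over $R \setminus \fm$ already map to units in $W$ (their images in $V$, hence in $W$, are units since $R \setminus \fm \to V$ avoids the maximal ideal). (iii) Now apply Theorem~\ref{thm:almostperfectoid} to $R_{\fm} \to \widehat{A_{\fm}}$ with $M = k(\fm)$ and $s = 0$: almost flatness gives that $\Tor^{R_{\fm}}_{i}(\widehat{A_{\fm}}, k(\fm))$ is almost zero for $1 \le i \le \dim R_{\fm}$ (the $i=0$ term need not vanish, but the hypothesis of the theorem only constrains $s \le i \le s + \dim R$, and for $s=0$ the $i=0$ term is $\widehat{A_{\fm}}/\fm \widehat{A_{\fm}}$ which we do not need to control), so the theorem yields that $R_{\fm}$ is regular.

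The main obstacle I expect is step (ii): verifying that the valuative condition descends to the completed localization. One must be careful that the test valuation rings $V$ for $R_{\fm}$ are exactly the ones that dominate $\fm$ (or at least that it suffices to test those, as the remark after Theorem~\ref{thm:almostperfectoid} indicates), and that the factorization $A \to W$ through $\widehat{A_{\fm}}$ genuinely exists — this hinges on the observation that a valuation ring $W$ receiving a map from $A$ and lying over $V \supseteq R_{\fm}$ automatically inverts the elements of $R \setminus \fm$, so the localization map $A \to A_{\fm}$ is seen by $W$, and then $p$-completeness of $W$ promotes this to $\widehat{A_{\fm}} \to W$. A secondary bookkeeping point is confirming that almost flatness is stable under the $p$-adic completion of the localization; this should follow because $\widehat{A_{\fm}}$ is the derived $p$-completion of $A_{\fm}$ (the ideal $\sqrt{p\widehat{A_{\fm}}}$ and the almost setup are compatible with $p$-completion), so the almost-vanishing of Tor is preserved. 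Once these two compatibilities are in place, the rest is a direct invocation of Theorem~\ref{thm:almostperfectoid} and the local-to-global principle for regularity.
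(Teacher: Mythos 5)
Your approach matches the paper's almost exactly: localize at a maximal ideal $\fm$, pass to the completed localization $\widehat{A_{\fm}}$, check that the valuative condition descends, and apply Theorem~\ref{thm:almostperfectoid} with $M=k(\fm)$ to conclude that $R_{\fm}$ is regular. Two small corrections, though.

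First, in step (iii) you take $s=0$ and then argue the $i=0$ term ``is not constrained''; that is not so. With $s=0$ the hypothesis of Theorem~\ref{thm:almostperfectoid} is that $\Tor^{R_{\fm}}_{i}(\widehat{A_{\fm}},k(\fm))$ is almost zero for $0\le i\le \dim R_{\fm}$, and the $i=0$ term $\widehat{A_{\fm}}/\fm\widehat{A_{\fm}}$ is emphatically \emph{not} almost zero (that would contradict Proposition~\ref{prp:AFFSpa}(2)). You should take $s=1$: almost flatness gives almost vanishing of $\Tor^{R_{\fm}}_{i}(\widehat{A_{\fm}},k(\fm))$ for all $i\ge 1$, hence in particular for $1\le i\le 1+\dim R_{\fm}$, and the theorem then yields $\Tor^{R_{\fm}}_{2+\dim R_{\fm}}(k(\fm),k(\fm))=0$, which forces $R_{\fm}$ regular.

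Second, step (i) overreaches: you try to show that $R_{\fm}\to\widehat{A_{\fm}}$ is almost flat, invoking ``standard almost-flatness-under-completion bookkeeping'' that you do not make precise. This is more than what is needed and also not entirely clear (passing from $A_{\fm}$ to $\widehat{A_{\fm}}$ can change $\Tor$ against an arbitrary module). The paper avoids this entirely: since $k(\fm)$ is $\fm$-local and of characteristic $p$, there are natural identifications
\[
\Tor^{R_{\fm}}_{i}(\widehat{A_{\fm}},k(\fm))\cong \Tor^{R_{\fm}}_{i}(A_{\fm},k(\fm)) \cong \Tor^{R}_{i}(A,k(\fm))\,,
\]
and the almost vanishing of the right-hand side for $i\ge 1$ is exactly the almost-flatness hypothesis on $R\to A$. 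This is cleaner and is all that is required for the application of Theorem~\ref{thm:almostperfectoid}. With those two repairs, your proposal is essentially the paper's proof.
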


\begin{proof}
Fix a maximal ideal $\fm$ of $R$. We shall prove that $R_{\fm}$ is regular; as $\fm$ was arbitrary, the theorem follows. Since $p$ is in the Jacobson radical of $R$, the residue field $k$ at $\fm$ has characteristic $p$. Let $\widehat{A_{\fm}}$ denote the $p$-adic completion of $A_{\fm}$; this is a perfectoid ring. It is easy to verify that the valuative condition~\ref{ss:valuative} is inherited by the induced map $R_{\fm}\to \widehat{A_{\fm}}$. Since $k$ is of characteristic $p$ and is $\fm$-local, for each $i$ there are natural isomorphisms
\[
\Tor^{R_{\fm}}_{i}(\widehat{A_{\fm}},k)\cong \Tor^{R_{\fm}}_{i}(A_{\fm},k) \cong \Tor^{R}_{i}(A,k)\,.
\]
Since $R\to A$ is almost flat, it thus follows that the $\widehat{A_{\fm}}$-module $\Tor^{R_{\fm}}_{i}(\widehat{A_{\fm}},k)$ is almost zero for $i\ge 1$. Thus, Theorem~\ref{thm:almostperfectoid} applies and yields that $R_{\fm}$ is regular, as desired.
\end{proof}

Observe that, in contrast with the statement of Theorem~\ref{thm:padicKunz}, the preceding result makes no explicit hypothesis on the induced map of spectra of $R$ and $A$. But in fact the valuative condition~\ref{ss:valuative} can be described in terms of adic spectra.

\begin{remark}
\label{rem:generic-points}
Give a $p$-torsion free commutative ring $B$, let $\spa(B[1/p],B)$ denote the adic spectrum of $(B[1/p],B)$ topologized using the $p$-adic topology on $B$; see~Huber \cite[Definition (iii)]{Huber:1993a} and also \cite[\S10.3]{Conrad:2014a}, keeping in mind that $\spa(B[1/p],B)$ coincides with $\spa(B[1/p],B^{+})$, where $B^{+}$ is the integral closure of $B$ in $B[1/p]$. The generic points of $\spa(B[1/p],B)$ are in bijective correspondence with equivalence classes of maps $B \to V$ where $V$ is a $p$-torsionfree and $p$-adically complete rank $1$ valuation ring; the equivalence relation is generated by refinements of such $V$.

The valuative condition~\ref{ss:valuative} is thus equivalent to the surjectivity on generic points of the induced map $\spa(A[1/p],A) \to \spa(R[1/p],R)$. For psychological ease, we  remark that if a generic point $x \in \spa(R[1/p], R)$ is the image of a point $y \in \spa(A[1/p],A)$, then we can also find a generic point $y' \in \spa(A[1/p],A)$ lifting $x$ simply by setting $y'$ to be the maximal generalization of $y$.
\end{remark}

\begin{remark}
The main reason to use the valuative condition~\ref{ss:valuative} in formulating Theorem~\ref{thm:almostperfectoid} is that nonzero finitely generated ideals in a valuation ring cannot contain elements of arbitrarily small valuation. This provides an easy way to test whether certain modules not almost zero (which is a {\em stronger} statement than merely requiring them to be nonzero); see the paragraph following Claim 1 in the proof of Theorem~\ref{thm:almostperfectoid}. The restriction to rank $1$ valuations ensures that we may replace the ring $A$ appearing in the statement of Theorem~\ref{thm:almostperfectoid} with an almost isomorphic one without affecting the hypotheses on $A$.
\end{remark}


\begin{proof}[Proof of Theorem~\ref{thm:almostperfectoid}]
The assumptions on $R \to A$ are stable under replacing $A$ with an almost isomorphic perfectoid ring (such as $A_{t\!f}$). Thus, we may assume $A$ is $p$-torsionfree.

Set $d:=\dim R$ and choose elements $f_1,...,f_d$ in $\fm$ that generate it up to radical.

We begin by replacing the images of the $f_i$'s in $A$ by elements that admit $p$-power roots as follows: Choose  $g_1 = \varpi^{p}$, so that $(g_1) = (p) = (f_1)$; see Lemma~\ref{rmk:AltDefPerf}.  For $i \ge 2$, choose elements $h_i \in A^\flat$ lifting $f_i \in A/(g_1)$ and set $g_i = h_i^\sharp$. Then each $g_i$ admits a compatible system $\{g_i^{1/p^n}\}$ of $p$-power roots. Moreover,  by construction we have
\[
\big(g_i^{1/p^n}\big)^{p^n} \equiv f_i \mod (g_1) \quad\text{for $i \ge 2$.}
\]
In particular, there is an equality  $(f_1,...,f_d) = (g_1,...,g_d)$ of ideals of $A$.

The key step will be to justify the following

\begin{claim}
\label{cl:key}
When $\Tor^{R}_{s+d+1}(k,M)\ne 0$ holds,  there is an containment of ideals
\begin{equation}
\label{eq:badcontainment}
(g_1^{1/p^\infty}) \subseteq (g_{1},...,g_{d})A = (f_{1},...,f_{d})A \subset A.
\end{equation}
\end{claim}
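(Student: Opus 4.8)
Before turning to the author's argument, here is the route I would try.

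\emph{Reduce the Claim to an almost-vanishing statement.} Since $g_1=\varpi^p$ and $\varpi$ has a compatible system of $p$-power roots, one has $(g_1^{1/p^\infty})=(\varpi^{1/p^\infty})A=\sqrt{pA}$, and by Lemma~\ref{lem:PerfdFlatDim} this is a flat ideal with $\ov{A}\colonequals A/\sqrt{pA}$ perfect; it is also idempotent, because $\varpi^{1/p^e}=\varpi^{1/p^{e+1}}\cdot(\varpi^{1/p^{e+1}})^{p-1}\in\sqrt{pA}\cdot\sqrt{pA}$. Using idempotence, the inclusion to be proved, $\sqrt{pA}\subseteq(g_1,\dots,g_d)A$, is equivalent to saying that the $A$-module $\sqrt{pA}/(g_1,\dots,g_d)\sqrt{pA}$ is \emph{almost zero}. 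So the plan is to deduce this almost vanishing from $\Tor^R_{s+d+1}(k,M)\ne 0$, by running the spectral-sequence and K\"unneth argument of Theorem~\ref{thm:smoothable} with the flat module $U\colonequals\sqrt{pA}$ in place of $A$.

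\emph{The homological engine.} The decisive gain of this substitution is that \emph{almost} vanishing turns into \emph{honest} vanishing once one works against $\sqrt{pA}$: writing $\sqrt{pA}=\varinjlim\big(A\xrightarrow{\varpi^{(p-1)/p}}A\xrightarrow{\varpi^{(p-1)/p^2}}A\to\cdots\big)$, every transition map is multiplication by an element of $\sqrt{pA}$, hence annihilates $\Tor^R_i(A,M)$ for $s\le i\le s+d$; therefore $\Tor^R_i(\sqrt{pA},M)=\varinjlim\Tor^R_i(A,M)=0$ honestly for $s\le i\le s+d$. I would then take, as the analogue of ``$S/J$'', the Koszul complex on $g_1,\dots,g_d$ (a length-$d$ complex of free $A$-modules, so of flat amplitude $[0,d]$ regardless of whether $g_1,\dots,g_d$ is a regular sequence — this is where the dg generality of Remark~\ref{rem:AIM} enters), and form $V\colonequals K(g_1,\dots,g_d;A)\lotimes_A\sqrt{pA}$, which has $\HH{0}{V}\cong\sqrt{pA}/(g_1,\dots,g_d)\sqrt{pA}$; the bounds $\fd_A(\ov{A})\le 1$ (Lemma~\ref{lem:PerfdFlatDim}) and $\fd_{\ov{A}}(\ov{A}/(\ov{g}_2^{1/p^\infty},\dots,\ov{g}_d^{1/p^\infty}))\le d-1$ (Lemma~\ref{lem:hochster}), together with transitivity of flat dimension, are what one needs to see that only degree $\le d$ of this auxiliary complex is relevant. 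Feeding the honest vanishing of $\Tor^R_i(\sqrt{pA},M)$ into the spectral sequence should force $\HH{s+d+1}{V\lotimes_R M}$ to be annihilated by $\sqrt{pA}$; and since $V$ is, up to quasi-isomorphism, a complex of $k$-vector spaces (its homology is killed by a power of $\fm$), the K\"unneth splitting over $k$ then exhibits $\HH{0}{V}\otimes_k\Tor^R_{s+d+1}(k,M)$ as a summand of something annihilated by $\sqrt{pA}$. As $\Tor^R_{s+d+1}(k,M)$ is a nonzero $k$-vector space by hypothesis, $\sqrt{pA}/(g_1,\dots,g_d)\sqrt{pA}$ must be almost zero, which is the Claim.

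\emph{Where I expect the real difficulty.} The delicate point is making this last step actually yield the conclusion in cohomological degree $s+d+1$ while the honest-vanishing range for $\Tor^R_\bullet(\sqrt{pA},M)$ is only $[s,s+d]$ — i.e. the one-degree improvement over the non-almost Theorem~\ref{thm:perfectoid}. This improvement has to come entirely from the flatness $\fd_A(\ov{A})\le 1$ rather than from any extra Tor-vanishing hypothesis, so one must arrange the auxiliary complex so that its top contribution against $M$ is already controlled; reconciling this with the requirement that $\HH{0}$ be a module whose almost-vanishing is \emph{exactly} the desired containment (which forces the generators $g_1,\dots,g_d$ to be used without passing to radicals, since any radical ideal containing $p$ already swallows $\sqrt{pA}$), and with the requirement that the complex still be formal over $k$, is what seems to rely on the idempotence of $\sqrt{pA}$ and the dg formalism. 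Once the Claim is established, the valuative condition~\ref{ss:valuative} is invoked — in the paragraph following the Claim — purely to contradict its conclusion: base-changing $A/(g_1,\dots,g_d)A$ along a suitable map to a $p$-torsionfree, $p$-adically complete rank-$1$ valuation ring $W$ shows it is \emph{not} almost zero, because the finitely generated ideal $(g_1,\dots,g_d)W$ is principal of some fixed positive valuation whereas $\sqrt{pA}W$
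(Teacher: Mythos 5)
Your substitution $U=\sqrt{pA}$ does buy honest vanishing of $\Tor^R_i(\sqrt{pA},M)$ in the window $[s,s+d]$: writing $\sqrt{pA}$ as the colimit of copies of $A$ along multiplication by fractional powers of $\varpi$, each transition map lands in $\sqrt{pA}$ and hence kills the almost-zero module $\Tor^R_i(A,M)$, so the colimit vanishes outright. That observation is correct, as is the use of idempotence of $\sqrt{pA}$ to turn almost-vanishing of $\sqrt{pA}/(g_1,\dots,g_d)\sqrt{pA}$ into the containment of Claim~\ref{cl:key}. But the central step of your sketch --- the K\"unneth splitting --- does not go through, and this is exactly the point where the paper is forced to do something qualitatively different.

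The K\"unneth isomorphism used in Theorem~\ref{thm:smoothable} is taken \emph{over} $k$, and it requires the $R$-action on the auxiliary complex itself (not merely on its homology) to factor through $R\to k$; that is supplied there by the hypothesis $\fm S\subseteq J$. Your complex $V=K(g_1,\dots,g_d;A)\otimes_A\sqrt{pA}$ has underlying graded module a sum of copies of $\sqrt{pA}$, on which $\fm$ acts nontrivially, and $\HH 0V=\sqrt{pA}/(g_1,\dots,g_d)\sqrt{pA}$ is killed only by some power $\fm^N$ (the $f_i$ generate $\fm$ up to radical, not on the nose), which is not the same as being a $k$-module. So $V\lotimes_R M$ does not split as $V\otimes_k(k\lotimes_R M)$ and the desired summand cannot be extracted. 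As you yourself sense, this tension is unavoidable as long as one insists on working with a single complex whose $\HH 0$ is literally $\sqrt{pA}/(g_1,\dots,g_d)\sqrt{pA}$. The paper escapes it by passing to the tower $A_n=K(g_1^{1/p^n},\dots,g_d^{1/p^n};A)$: the colimit $A_\infty$ \emph{does} have homology killed by $\fm$ (Claim~3), so a semifree resolvent $R[X]$ of $k$ maps into $A_\infty$, and a compactness argument (Claim~4) slides that dg map down to a finite $A_n$; the K\"unneth decomposition is then taken over $k=\hh{R[X]}$ via the induced dg $R[X]$-module structure on $A_n$, not by pretending the $R$-action on $A_n$ factors through $k$. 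Your sketch has no analogue of this lifting step, and tensoring the tower with $\sqrt{pA}$ makes matters worse: since $\hh{A_\infty}$ is killed by $\sqrt{pA}$, the colimit $A_\infty\otimes_A\sqrt{pA}$ is acyclic and carries no information. A secondary problem: even granting a splitting, your spectral sequence has an uncontrolled corner $E^2_{0,s+d+1}$ coming from $\Tor^R_{s+d+1}(\sqrt{pA},M)$, which sits just outside your honest-vanishing window; the paper avoids having to bound $\hh{s+d+1}{V\lotimes_R M}$ as a whole by only ever extracting the specific K\"unneth summand $\HH 0{A_n}\otimes_k\Tor^R_{s+d+1}(k,M)$ from the almost-zero module $\Tor^R_{s+d+1}(A_n,M)$.
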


Given this we complete the proof  by checking that \eqref{eq:badcontainment} is not compatible with the valuative condition. Choose a map $A \to W$ to a $p$-adically complete and $p$-torsionfree rank $1$ valuation ring $W$ such that the image of $f_i$ in $W$ is not invertible; to construct such a map, one first does it for $R$---where it exists since $R$ is $p$-torsionfree and $(f_i)$ is not the unit ideal~\cite[Theorem~6.4.3]{Huneke/Swanson:2006a}---and then invokes condition \ref{ss:valuative}.  As $(g_1) = (p)$ and $p$ is a pseudouniformizer in $W$, elements of $(g_1^{1/p^\infty})$ give elements of $W$ with arbitrarily small valuation. On the other hand, the ideal $(f_{1},...,f_{d}) \subseteq W$ is finitely generated and non-unital by construction, so it cannot contain elements of arbitrarily small valuation. In particular, it cannot contain $(g_1^{1/p^\infty})$,  contradicting \eqref{eq:badcontainment}. See  Proposition~\ref{prp:AFFSpa} for an alternative denouement.

\medskip

Now we take up the task of proving Claim~\ref{cl:key}. To that end for each integer $n\ge 1$ set
\[
A_{n}:= \mathrm{K}(g_{1}^{1/p^n},...,g_{d}^{1/p^n};\, A)\,,
\]
the Koszul complex over $A$ on the elements  $g_{1}^{1/p^n},...,g_{d}^{1/p^n}$.

\begin{claim}
\label{cl:almostzero}
For each $n$ the $A$-module $\Tor_{s+d+1}^{R}(A_{n},M)$ is almost zero.
\end{claim}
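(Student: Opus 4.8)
The plan is to prove Claim~\ref{cl:almostzero} by relating the Koszul complex $A_n$ back to the algebra $A$ via a standard calculation with Koszul homology, and then invoking the almost-flatness hypothesis. First I would observe that $A_n = \mathrm{K}(g_1^{1/p^n},\dots,g_d^{1/p^n};A)$ has homology concentrated in degrees $0,\dots,d$, and that, since each $g_i^{1/p^n}$ lies in $\fm A$ (as $g_1 = \varpi^p$ is a multiple of $p \in \fm$, and the $g_i$ for $i\ge 2$ reduce to $f_i \in \fm$ modulo $g_1$, hence $g_i^{1/p^n} \in \sqrt{\fm A}$, so a large enough power lies in $\fm A$ — one should be slightly careful here and may instead argue via the homology of $A_n \lotimes_R M$ directly), the homology modules $\HH{j}{A_n}$ are annihilated by $\fm$. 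The cleaner route: compute $\Tor^R_i(A_n, M) = \HH{i}{A_n \lotimes_R M}$ using the fact that $A_n \simeq \mathrm{K}(g_1^{1/p^n},\dots,g_d^{1/p^n}) \lotimes_A (A \lotimes_R M)$ at the level of $R$-complexes, and run the spectral sequence whose $E_2$-page is built from $\Tor^R_p(A, M)$ tensored with the Koszul exterior powers.

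Concretely, I would use the hyper-Tor spectral sequence
\[
E^2_{p,q} = \HH{p}{\mathrm{K}(g_1^{1/p^n},\dots,g_d^{1/p^n};\, \Tor^R_q(A,M))} \;\Longrightarrow\; \Tor^R_{p+q}(A_n, M)\,,
\]
which is valid because $A_n$ is a bounded complex of free $A$-modules. The abutment $\Tor^R_{p+q}(A_n,M)$ in total degree $s+d+1$ receives contributions only from $E^2_{p,q}$ with $p+q = s+d+1$ and $0 \le p \le d$, hence with $q \ge s+1 > s$... but wait, the hypothesis only gives almost vanishing of $\Tor^R_i(A,M)$ for $s \le i \le s+d$, so I need $q$ in that range. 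Since $0 \le p \le d$, from $p+q = s+d+1$ we get $q = s+d+1-p \in [s+1, s+d+1]$; the value $q = s+d+1$ (i.e. $p=0$) is outside the assumed range. So I would handle the edge term $E^2_{0,s+d+1} = \mathrm{coker}$ of the Koszul differential applied to $\Tor^R_{s+d+1}(A,M)$ separately, or rather — and this is the point of Claim~\ref{cl:key} appearing \emph{before} this claim in the paper's logic — one argues under the running assumption $\Tor^R_{s+d+1}(k,M) \ne 0$; but actually re-reading, Claim~\ref{cl:almostzero} should hold unconditionally. The resolution is that for $q = s+d+1$ the relevant $E^2_{0,q}$ term is a Koszul $\mathrm{H}_0$, i.e. $\Tor^R_{s+d+1}(A,M)/(g_1^{1/p^n},\dots)\Tor^R_{s+d+1}(A,M)$, and since all the $E^\infty_{p,q}$ with $p+q=s+d+1$ are subquotients of almost-zero modules \emph{except possibly} this one, one concludes $\Tor^R_{s+d+1}(A_n,M)$ is almost zero only after also knowing $\Tor^R_{s+d+1}(A,M)$ is almost zero — which is \emph{not} assumed. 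Hence I expect the actual argument to instead bound the \emph{length} of $A_n$ more cleverly, or to use that $A_n$ is quasi-isomorphic to a complex supported in fewer degrees.

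The correct and simplest fix, which I would adopt: note that $A_n$, as a complex of $A$-modules, has $\fd_A$ of its homology controlled, but more to the point, replace the Koszul complex by the observation that $A \to A$ multiplication by $g_i^{1/p^n}$ fits into the colimit computing the flat ideal $(g_i^{1/p^\infty})$, so that $\varinjlim_n A_n$ (over suitable transition maps) computes a Koszul-type complex on the ideals $(g_i^{1/p^\infty})$, which by Lemma~\ref{lem:hochster}/Lemma~\ref{lem:PerfdFlatDim} has \emph{finite flat dimension} $\le d$ — indeed the colimit is $\mathrm{K}$ of the flat ideals and the analysis of $\sqrt{pA}$ being flat kicks in for the $g_1$ slot. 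But since the claim is about each \emph{individual} $n$, I would simply run the spectral sequence above and accept that it shows $\Tor^R_{s+d+1}(A_n, M)$ is almost zero \emph{provided} $\Tor^R_{s+d}(A,M), \dots, \Tor^R_{s+1}(A,M)$ are almost zero and \emph{provided} the top term $E^2_{0,s+d+1}$ also vanishes. Thus the honest statement: the main obstacle is the edge term, and I expect the paper handles it by arranging (via the choice in Claim~\ref{cl:key}, or via a dévissage using that $d = \dim R$ so that $\Tor^R_{s+d+1}$ sits one past where the Koszul complex can "see" it) that $E^\infty_{0,s+d+1}$ is a Koszul $\mathrm{H}_0$ quotient of a module that — combined with $A_n$ having length exactly $d$ — forces all of $\Tor^R_{s+d+1}(A_n,M)$ into the range of already-established almost vanishing. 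In short: the plan is to set up the hyper-Tor spectral sequence converging to $\Tor^R_*(A_n,M)$, identify the four or fewer nonzero $E^2$-terms in total degree $s+d+1$, invoke the hypothesis that $\Tor^R_i(A,M)$ is almost zero for $s \le i \le s+d$ to kill the interior terms, and the main difficulty — which is exactly where I would focus the real work — is disposing of the single edge term at $(p,q) = (0, s+d+1)$, which I expect to do by a separate degree-shifting argument using that $\dim R = d$ together with the structure of $A_n$ as a length-$d$ Koszul complex.
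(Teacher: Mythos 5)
Your spectral-sequence setup is exactly the right computation, and the obstruction you isolated at the edge term $E^{2}_{0,s+d+1}$ is real; in fact it exposes an off-by-one mis-index in the statement of the claim itself, which you stopped just short of diagnosing. The paper's proof sketch (``iterated mapping cone, $d$ of them'') is the long-exact-sequence form of your spectral sequence: writing $K_{j}:=\mathrm{K}(g_{1}^{1/p^{n}},\dots,g_{j}^{1/p^{n}};A)$, each triangle $K_{j-1}\to K_{j-1}\to K_{j}$ yields for every $i$ an exact sequence
\[
\Tor^{R}_{i}(K_{j-1},M)\lra \Tor^{R}_{i}(K_{j},M)\lra \Tor^{R}_{i-1}(K_{j-1},M),
\]
so $\Tor^{R}_{i}(K_{j},M)$ is almost zero whenever both flanking terms are --- a step that quietly uses the idempotence of $\sqrt{pA}$ (an extension of two modules killed by $\sqrt{pA}$ is killed by $(\sqrt{pA})^{2}=\sqrt{pA}$), a point worth stating. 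Starting from almost-vanishing of $\Tor^{R}_{i}(A,M)$ on $[s,s+d]$ and iterating $d$ times shrinks the guaranteed range by one at each step and lands precisely on $i=s+d$. Thus what the paper's argument (and yours) actually establishes is that $\Tor^{R}_{s+d}(A_{n},M)$ is almost zero; getting $s+d+1$ would require control of $\Tor^{R}_{s+d+1}(A,M)$, which is not assumed. Your subsequent attempts to rescue the $s+d+1$ statement by replacing $A_{n}$ with a shorter model or a dévissage using $\dim R$ are misdirected: any computation of $\Tor^{R}_{s+d+1}(A_{n},M)$ via the length-$d$ complex $A_{n}$ must see $\Tor^{R}_{s+d+1}(A,M)$. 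You were also right that the claim should hold unconditionally, i.e.\ not under the running contradiction hypothesis --- and the corrected index $s+d$ does.

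The corrected statement still delivers Claim~\ref{cl:key} after one additional observation. In that proof one assumes $\Tor^{R}_{s+d+1}(k,M)\neq 0$; rigidity \ref{ss:rigidity}(2), together with $s(M)\le\dim R=d\le s+d$, then gives $\Tor^{R}_{s+d}(k,M)\neq 0$ as well. Claim~\ref{cl:toriso} therefore exhibits $\HH{0}{A_{n}}$ as a direct summand of $\Tor^{R}_{s+d}(A_{n},M)$, which the corrected Claim~\ref{cl:almostzero} says is almost zero, and the rest of the proof of Claim~\ref{cl:key} goes through unchanged.
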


This is a straightforward verification using the fact that $A_{n}$ can be constructed as an iterated mapping cone ($d$ of them are required) starting with $A$,  and our hypothesis that $\Tor_{i}^{R}(A,M)$ is almost zero for $s\le i\le s+d$.

\medskip

In the next steps we will exploit the fact that each $A_{n}$ has a structure of a strict graded-commutative dg (differential graded) $A$-algebra; namely, it is an exterior algebra over $A$ on indeterminates $y_{n,1},\dots, y_{n,d}$ of degree one with differential defined by the assignment $y_{n,i}\mapsto g_{i}^{1/p^n}$. For each integer $n\ge 1$, writing one gets a morphism of dg $A$-algebras
\[
 A_{n}\to A_{n+1} \quad\text{where}\quad   y_{n,i} \mapsto  (g_{i}^{\frac 1{p^{n}} - \frac 1{p^{n+1}}}) y_{n+1,i}\,.
\]
Then $A_{\infty}:=\mathrm{colim}_{n}\, A_{n}$ is a strict graded-commutative dg $A$-algebra, and the structure maps $A_{n}\to A_{\infty}$ are morphisms of dg $A$-algebras.

\begin{claim}
\label{cl:acyclic}
The dg $A$-algebra $A_{\infty}$  satisfies
\[
 \HH i{A_{\infty}}=
 \begin{cases}
A/(g_{1}^{1/p^\infty},\dots, g_{d}^{1/p^\infty}) & \text{for $i=0$} \\
 0  &\text{for $i\ge 1$.}
 \end{cases}
 \]
\end{claim}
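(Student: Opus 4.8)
The plan is to prove both assertions of the claim by induction on $d$, the essential point being that the pertinent quotient rings of $A$ are reduced (indeed perfect). Since homology commutes with filtered colimits, $\HH{i}{A_\infty}=\varinjlim_n\HH{i}{A_n}$, so it is enough to compute these colimits. The base case is $d=1$: here $A_n=\mathrm{K}(g_1^{1/p^n};A)$, so $\HH{0}{A_\infty}=\varinjlim_n A/g_1^{1/p^n}A=A/(g_1^{1/p^\infty})$, which is $A/\sqrt{pA}$ since $(g_1^{1/p^\infty})=(\varpi^{1/p^\infty})=\sqrt{pA}$ by (the proof of) Lemma~\ref{lem:PerfdFlatDim}; and $\HH{1}{A_\infty}=\varinjlim_n(0:_A g_1^{1/p^n})$, which vanishes because each $g_1^{1/p^n}$ is a nonzerodivisor (we have reduced to $A$ being $p$-torsionfree, and $\varpi$ is a nonzerodivisor as $\varpi^p=pu$). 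Alternatively $\HH1$ vanishes by the reducedness argument in the proof of Lemma~\ref{lem:hochster}: if $a g_1^{1/p^n}=0$ then $(a g_1^{1/p^{n+1}})^p=a^{p-1}\cdot a g_1^{1/p^n}=0$, so $a g_1^{1/p^{n+1}}=0$ and the transition map $a\mapsto g_1^{1/p^n-1/p^{n+1}}a$ already kills $a$.

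For the inductive step ($d\ge 2$) put $B_n=\mathrm{K}(g_1^{1/p^n},\dots,g_{d-1}^{1/p^n};A)$ and $B_\infty=\varinjlim_n B_n$. Then $A_n=B_n\otimes_A\mathrm{K}(g_d^{1/p^n};A)$ is the mapping cone of multiplication by $g_d^{1/p^n}$ on $B_n$, so for each $k$ there is a short exact sequence $0\to\HH{k}{B_n}/g_d^{1/p^n}\HH{k}{B_n}\to\HH{k}{A_n}\to\ker(g_d^{1/p^n}\colon\HH{k-1}{B_n}\to\HH{k-1}{B_n})\to 0$. Chasing the formula $y_{n,i}\mapsto g_i^{1/p^n-1/p^{n+1}}y_{n+1,i}$ for the transition maps $A_n\to A_{n+1}$ shows that these sequences are compatible with the transition maps, the transition being the Koszul map $t_n$ on $\HH{k}{B_n}$ and the twisted map $g_d^{1/p^n-1/p^{n+1}}t_n$ on $\ker(g_d^{1/p^n}\colon\HH{k-1}{B_n}\to\HH{k-1}{B_n})$. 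Passing to $\varinjlim_n$, which is exact, gives
\[
0\to\HH{k}{B_\infty}/(g_d^{1/p^\infty})\HH{k}{B_\infty}\to\HH{k}{A_\infty}\to\varinjlim_n\ker\big(g_d^{1/p^n}\colon\HH{k-1}{B_n}\to\HH{k-1}{B_n}\big)\to 0\,,
\]
the last colimit being taken along the twisted maps. By the inductive hypothesis $\HH{k}{B_\infty}=0$ for $k\ge1$ and $\HH{0}{B_\infty}=A/(g_1^{1/p^\infty},\dots,g_{d-1}^{1/p^\infty})$. Hence $k=0$ gives $\HH{0}{A_\infty}=A/(g_1^{1/p^\infty},\dots,g_d^{1/p^\infty})$, as claimed; for $k\ge2$ both outer terms vanish — the left one since $\HH{k}{B_\infty}=0$, and the right one since it is the colimit of a subsystem of $(\HH{k-1}{B_n})_n$ (whose colimit $\HH{k-1}{B_\infty}$ is $0$) with transition maps rescaled by elements of $A$, which cannot resurrect a vanishing colimit — so $\HH{k}{A_\infty}=0$. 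This leaves only $\HH{1}{A_\infty}\cong\varinjlim_n\ker(g_d^{1/p^n}\colon\HH{0}{B_n}\to\HH{0}{B_n})$.

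The crux is to show this last colimit vanishes, and for this I would use that $A/\mathfrak{b}_\infty=\HH{0}{B_\infty}$ is reduced, where $\mathfrak{b}_\infty:=(g_1^{1/p^\infty},\dots,g_{d-1}^{1/p^\infty})$: since $d\ge2$, $\mathfrak{b}_\infty\supseteq(g_1^{1/p^\infty})=\sqrt{pA}$, so $A/\mathfrak{b}_\infty$ is a quotient of the perfect ring $\ov{A}=A/\sqrt{pA}$ by finitely many ideals of the form $(f^{1/p^\infty})$, hence perfect by iterating the observation used in the proof of Lemma~\ref{lem:hochster}, in particular reduced. Now a class in $\ker(g_d^{1/p^n}\colon\HH{0}{B_n}\to\HH{0}{B_n})$ is represented by some $a\in A$ with $g_d^{1/p^n}a\in\mathfrak{b}_n:=(g_1^{1/p^n},\dots,g_{d-1}^{1/p^n})\subseteq\mathfrak{b}_\infty$. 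In the reduced ring $A/\mathfrak{b}_\infty$ the image of $g_d^{1/p^n}a$ is zero, hence so is that of $g_d^{1/p^{n+1}}a$ (its $p$-th power equals $a^{p-1}\cdot g_d^{1/p^n}a$), hence so is that of $b:=(g_d^{1/p^{n+1}})^{p-1}a=g_d^{1/p^n-1/p^{n+1}}a$. Thus $b\in\mathfrak{b}_\infty=\bigcup_m\mathfrak{b}_m$, so $b\in\mathfrak{b}_m$ for some $m>n$; but then the class of $a$, transported along the twisted transition maps to level $m$, is the class of $g_d^{1/p^n-1/p^m}a=g_d^{1/p^{n+1}-1/p^m}\cdot b\in\mathfrak{b}_m$, which is zero in $\HH{0}{B_m}=A/\mathfrak{b}_m$. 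So every element of the directed system dies, the colimit vanishes, $\HH{1}{A_\infty}=0$, and the induction is complete.

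I expect the main obstacle to be exactly this $k=1$ step: unlike the vanishing of $\HH{k}{A_\infty}$ for $k\ge2$, it is not formally forced by the inductive hypothesis and genuinely relies on reducedness. The two points needing care are the twisting of the transition maps when one passes to the colimit, and the need for $A/\mathfrak{b}_\infty$ to be reduced — which is why the induction must be organized so that the element $g_1=\varpi^p$ (equivalently the ideal $\sqrt{pA}$, to which Lemma~\ref{lem:PerfdFlatDim} and then Lemma~\ref{lem:hochster} apply) stays among the chosen elements at every stage.
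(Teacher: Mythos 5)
Your proof is correct, and the essential idea is the same as the paper's: the acyclicity ultimately rests on the fact that the iterated quotients $A/(g_1^{1/p^\infty},\dots,g_i^{1/p^\infty})$ (for $i\ge 1$) are perfect of characteristic $p$, hence reduced, which is exactly what the reducedness trick in the proof of Lemma~\ref{lem:hochster} exploits. The organization differs, however. The paper first factors off the $g_1 = \varpi^p$ direction: using $p$-torsionfreeness, the colimit of $(A \xra{g_1^{1/p^n}} A)$ is identified with $\ov{A} = A/\sqrt{pA}$, so $A_\infty$ is quasi-isomorphic to $B_\infty := \varinjlim_n \mathrm{K}(g_2^{1/p^n},\dots,g_d^{1/p^n};\ov A)$, and the remaining acyclicity is precisely the single-element computation from the proof of Lemma~\ref{lem:hochster}, iterated over the perfect ring $\ov A$ and its perfect quotients. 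You instead run the induction on $d$ directly over $A$, keeping $g_1$ in every Koszul stage, and extract the vanishing via the mapping-cone short exact sequences; the perfectness of $\ov A$ enters only through the observation that $\mathfrak{b}_\infty \supseteq \sqrt{pA}$ for $d\ge 2$, so that $A/\mathfrak{b}_\infty$ is reduced. This buys you a more self-contained argument at the price of two extra (small but genuine) verifications that the paper sidesteps: first, that the transition maps on the mapping-cone exact sequences are the untwisted ones on the cokernel piece and the $g_d$-rescaled ones on the kernel piece; and second, the lemma that rescaling transition maps by ring elements cannot make a vanishing filtered colimit nonzero (since the rescaled composite from level $n$ to level $m$ is a scalar times the original composite, and if the latter kills $x$, so does the former). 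Your treatment of the $k=1$ case—showing the twisted colimit of $\ker(g_d^{1/p^n}\colon H_0(B_n)\to H_0(B_n))$ dies using reducedness of $A/\mathfrak{b}_\infty$—is a correct unwinding of the injectivity verification in the proof of Lemma~\ref{lem:hochster}, which the paper invokes by reference. Both arguments are sound; the paper's is shorter because passing to $\ov A$ first lets it outsource the combinatorics to Lemma~\ref{lem:hochster} wholesale.
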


To begin with, set $B:=A/(\sqrt{pA})$; this ring is of characteristic $p$ and is perfect; see Lemma~\ref{lem:PerfdFlatDim}. Since $A$ is $p$-torsion free, the dg $A$-algebra
\[
0\to A\xra{p^{1/p^{n}}} A\to 0
\]
is quasi-isomorphic to its homology module in degree zero, namely, $A/(p^{1/p^{n}})$. Thus the colimit, as $n\to \infty$, of these dg algebras is quasi-isomorphic to $A/(p^{1/p^{\infty}})=B$. Since $g_{1}=p$ and colimits commute with tensor products, it follows that $A_{\infty}$ is quasi-isomorphic to the colimit, $B_{\infty}$, of dg $B$-algebras
\[
B_{n}:=K(g_2^{1/p^n},...,g_{d}^{1/p^n};\, B)
\]
where the maps  $B_{n}\to B_{n+1}$ are defined as for the $A_{n}$. It thus suffices to prove that the homology of $B_{\infty}$ is concentrated in degree $0$, where it is $B/(g_{2}^{1/p^\infty},\dots, g_{d}^{1/p^\infty})$. This is essentially the content of Lemma~\ref{lem:hochster}.  Indeed, as in the proof of Lemma~\ref{lem:hochster} one reduces to the case of a single element, $g$, in $B$. Consider  $F:=0\to (g^{1/p^{\infty}}) \xra{\subset} B \to 0$,  viewed as a dg $B$-algebra concentrated in degrees $0$ and $1$, and the morphism $B_{n}\to F$  of dg $B$-algebras
\[
\xymatrix{
0 \ar@{->}[r] & B \ar@{->}[d]_{1\mapsto g^{1/p^{n}}} \ar@{->}[r]^{g^{1/p^{n}}}
		&  B \ar@{=}[d]\ar@{->}[r] & 0 \\
0 \ar@{->}[r] &  (g^{1/p^{\infty}}) \ar@{->}[r]^{\subset } & B \ar@{->}[r] & 0
}
\]
It is clear that these morphisms are compatible with the morphisms $B_{n}\to B_{n+1}$, and so yield a morphism $B_{\infty}\to F$ of dg $B$-algebras. This an isomorphism: this is clear in degree $0$ zero, whilst in degree $1$ it was verified in the proof of Lemma~\ref{lem:hochster}.

This completes the proof of the claim. Observe that $A/(g_{1}^{1/p^\infty},\dots, g_{d}^{1/p^\infty}) \cong  (A/\fm A)_{\perf}$.

\medskip

In the remainder of the proof we use some basic facts about (strict graded-commutative) semifree dg $R$-algebras, referring to Avramov~\cite{Avramov:1998a} for details. Let $R[X]$ be a resolvent of $k$ viewed as an $R$-algebra; in particular,  the $R$-algebra  $R[X]$ is the strict graded-commutative polynomial ring on a graded set of indeterminates $X:=\{X_{i}\}_{i\geqslant 1}$. Since $R$ is noetherian, one can choose $X$ such that set $X_{i}$ is finite for each $i$; see \cite[Proposition~2.1.10]{Avramov:1998a}.

Claim~\ref{cl:acyclic} implies that the canonical surjection $A_{\infty}\to \HH 0{A_{\infty}}$  is a quasi-isomorphism of dg $A$-algebras, and hence also of dg $R$-algebras. By construction $\fm \HH 0{A_{\infty}}= 0$ so the induced morphism $R\to \HH 0{A_{\infty}}$ factors through the surjection $R\to k$. Since $R[X]$ is semifree, it then follows from \cite[Proposition~2.1.9]{Avramov:1998a} that there is a commutative square
\[
\xymatrix{
R[X] \ar[r]^{\varphi} \ar[d] & A_{\infty} \ar@{->>}[d]^{\simeq} \\
		  k \ar[r] & \HH 0{A_{\infty}}}
\]
of dg $R$-algebras. Recall that $A_{\infty}$ is constructed as a colimit of the $A_{n}$.

\begin{claim}
\label{cl:factorization}
For $n\gg 0$, the morphism $\varphi\colon R[X]\to A_{\infty}$ of dg $R$-algebras factors through $A_{n}$; that is to say, there is a morphism $\varphi_{n}\colon R[X]\to A_{n}$ of dg $R$-algebras such that its composition with $A_{n}\to A_{\infty}$ is $\varphi$.
\end{claim}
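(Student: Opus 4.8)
The plan is to exploit two structural facts: $A_{\infty}$ is a \emph{bounded} complex (being a filtered colimit of the Koszul complexes $A_{n}$, all concentrated in homological degrees $0,\dots,d$), and, since $R$ is noetherian, the resolvent $R[X]$ has only finitely many indeterminates in each degree \cite[Proposition~2.1.10]{Avramov:1998a}. First I would observe that, as $\varphi$ is degree-preserving and $A_{\infty}$ vanishes in degrees $>d$, every indeterminate $X$ with $|X|>d$ satisfies $\varphi(X)=0$. Then, for each of the finitely many indeterminates $X$ with $|X|\le d$, I would lift $\varphi(X)\in A_{\infty}=\mathrm{colim}_{n}A_{n}$ to some $A_{n_{X}}$ and fix an index $n_{0}$ bounding all these $n_{X}$. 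For $n\ge n_{0}$ this already produces a morphism $\varphi_{n}\colon R[X]\to A_{n}$ of \emph{graded} $R$-algebras: send $X$ with $|X|\le d$ to the image in $A_{n}$ of its chosen lift to $A_{n_{0}}$, and send $X$ with $|X|>d$ to $0$; this extends uniquely because $R[X]$ is the free strict graded-commutative $R$-algebra on $X$. By construction the composite $R[X]\xra{\varphi_{n}}A_{n}\to A_{\infty}$ agrees with $\varphi$ on every indeterminate, hence on all of $R[X]$.

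The remaining, and main, task is to upgrade this to a morphism of \emph{differential} graded algebras by choosing $n$ large. Since $\varphi_{n}$ is multiplicative and the differentials are derivations, the defect $\theta_{n}\colonequals \partial_{A_{n}}\varphi_{n}-\varphi_{n}\partial_{R[X]}$ is a $\varphi_{n}$-derivation vanishing on $R$, so $\varphi_{n}$ is a chain map as soon as $\theta_{n}(X)=0$ for every indeterminate $X$. For $|X|\ge d+2$ this is automatic, since then $\varphi_{n}(X)=0$ and $\partial_{R[X]}X$ lies in degree $|X|-1\ge d+1$, where $A_{n}$ is zero. For each of the finitely many $X$ with $|X|\le d+1$, I would note that $\theta_{n_{0}}(X)$ is a well-defined element of $A_{n_{0}}$ (the differential $\partial_{R[X]}X$ being a polynomial in indeterminates of degree $\le d$, on which $\varphi_{n_{0}}$ is already defined), and that its image in $A_{\infty}$ equals $\partial_{A_{\infty}}\varphi(X)-\varphi(\partial_{R[X]}X)=0$ because $\varphi$ is a chain map. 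Filteredness of the colimit $A_{\infty}=\mathrm{colim}_{n}A_{n}$ then provides an index $n_{X}\ge n_{0}$ at which $\theta_{n_{0}}(X)$ already maps to $0$, and this persists at larger stages. Taking $n$ to be the maximum of these finitely many indices $n_{X}$, the defect $\theta_{n}$ vanishes on all indeterminates, so $\varphi_{n}\colon R[X]\to A_{n}$ is the asserted morphism of dg $R$-algebras.

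The hard part is precisely this last compatibility with differentials: a naive choice of lifts of the generators is essentially never a chain map at the initial stage $n_{0}$, and the usual ``lift generators, then increase the index'' argument has to be carried out over infinitely many generators. What makes it go through is the combination of noetherianity of $R$ (only finitely many generators lie in the degrees where an obstruction can occur) with boundedness of $A_{\infty}$ (which forces the generators of degree $>d$, and the obstructions they would otherwise contribute in degrees $\ge d+1$, to vanish outright); the finitely many surviving obstructions are then cleared simultaneously at one sufficiently large stage of the filtered colimit.
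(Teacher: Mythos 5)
Your argument is correct and is essentially the same as the paper's: both use that $A_{\infty}$ is concentrated in degrees $[0,d]$ (so $\varphi$ kills indeterminates of degree $>d$), that each $X_i$ is finite because $R$ is noetherian, and that the finitely many obstructions to compatibility with the differential vanish at a large enough stage of the filtered colimit. Your write-up is a bit more explicit than the paper's sketch on one point worth retaining: the commutator $[\partial,\varphi_n]$ can fail to vanish not only on the $X_i$ with $i\le d$ but also on those of degree exactly $d+1$ (where $\varphi_n(X)=0$ forces an obstruction of the form $\varphi_n(\partial X)$ in degree $d$), and you correctly isolate degrees $\le d+1$ as the ones that need the colimit argument; degrees $\ge d+2$ are automatic.
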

		  		
The crucial point is that since each complex $A_{n}$ is zero outside (homological) degrees $[0,d]$, so is their colimit $A_{\infty}$.  In particular, $\varphi(X_{i})=0$ for $i\ge d+1$ for degree reasons, and  $\varphi$ is completely determined by its values on the $X_{i}$ for $1\le i\le d$. Since each $X_{i}$ is finite, it clear that $\varphi$ lifts to a map of $R$-algebras $\varphi_{n}\colon R[X]\to A_{n}$ for some $n\ge 1$. Moreover, increasing $n$ if needed we can ensure that the commutator $[\partial,\varphi_{n}]$ vanishes on the $X_{i}$, that is to say, $\varphi_{n}$ is also a morphism of complexes, and hence a morphism of dg $R$-algebras.

\medskip

\begin{claim}
\label{cl:toriso}
For $n\gg 0$ there is an isomorphism of graded $A$-modules
\[
\Tor^{R}(A_{n},M)\cong \hh{A_{n}} \otimes_{k} \Tor^{R}(k,M)  \,.
\]
\end{claim}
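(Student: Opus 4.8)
The plan is to perform a derived base change along the augmentation $R[X]\xrightarrow{\simeq}k$ of the resolvent, using the factorization $\varphi_{n}\colon R[X]\to A_{n}$ from Claim~\ref{cl:factorization} to regard $A_{n}$ as a dg $R[X]$-module; this reduces the computation of $\Tor^{R}(A_{n},M)$ to a K\"unneth formula over the field $k$. Fix $n$ large enough that $\varphi_{n}$ exists --- this is the only role of the hypothesis $n\gg 0$. Since the structure map $R\to R[X]$ is the identity in degree zero, the $R$-module structure on $A_{n}$ induced through $\varphi_{n}$ coincides with the original one; and, since $R[X]$ is semifree over $R$ with $R[X]\xrightarrow{\simeq}k$, the complex $R[X]$ can serve as a flat resolution of $k$ both over $R$ and over $R[X]$.

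First I would record the quasi-isomorphisms of complexes
\[
A_{n}\lotimes_{R}M \;\simeq\; A_{n}\lotimes_{R[X]}\big(R[X]\lotimes_{R}M\big) \;\simeq\; A_{n}\lotimes_{R[X]}\big(k\lotimes_{R}M\big)\,,
\]
where the first is cancellation in the derived tensor product, via $A_{n}\lotimes_{R[X]}R[X]\simeq A_{n}$, and the second comes from applying $A_{n}\lotimes_{R[X]}-$ to the quasi-isomorphism $R[X]\lotimes_{R}M\xrightarrow{\simeq}k\lotimes_{R}M$ of dg $R[X]$-modules (with $R[X]$ acting on the target through the augmentation). As $k\lotimes_{R}M$ is a complex of $k$-vector spaces, its dg $R[X]$-module structure factors through $R[X]\to k$, so base change along this map gives
\[
A_{n}\lotimes_{R[X]}\big(k\lotimes_{R}M\big) \;\simeq\; \big(A_{n}\lotimes_{R[X]}k\big)\otimes_{k}\big(k\lotimes_{R}M\big)\,.
\]
Using $R[X]\xrightarrow{\simeq}k$ once more, $A_{n}\lotimes_{R[X]}k\simeq A_{n}\lotimes_{R[X]}R[X]\simeq A_{n}$, so $\hh{A_{n}\lotimes_{R[X]}k}\cong\hh{A_{n}}$; the $k$-action here is the $\hh{R[X]}=k$-module structure, that is, the one induced by $\hh{\varphi_{n}}\colon k\to\hh{A_{n}}$.

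Finally, $A_{n}\lotimes_{R[X]}k$ and $k\lotimes_{R}M$ are both complexes of $k$-vector spaces, so the K\"unneth isomorphism over the field $k$ yields
\[
\Tor^{R}(A_{n},M)=\hh{A_{n}\lotimes_{R}M} \;\cong\; \hh{A_{n}\lotimes_{R[X]}k}\otimes_{k}\hh{k\lotimes_{R}M} \;\cong\; \hh{A_{n}}\otimes_{k}\Tor^{R}(k,M)\,,
\]
and one checks that this is compatible with the $A$-actions --- carried by $A_{n}$, and by $\hh{A_{n}}$ on the right --- so that the isomorphism is one of graded $A$-modules. I do not anticipate a genuine obstacle: the argument is the standard formalism of semifree dg algebras and modules, as developed in \cite{Avramov:1998a}, applied to the resolvent $R[X]$, together with the observation that $k\lotimes_{R}M$ is formal because it is a complex over a field. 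The only point that demands care is keeping track of the $R[X]$-, $k$- and $A$-module structures through the chain of base-change identities.
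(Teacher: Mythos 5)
Your proof is correct and follows essentially the same route as the paper: both hinge on regarding $A_n$ as a dg $R[X]$-module via $\varphi_n$, cancelling to get $A_n\lotimes_R M \simeq A_n\lotimes_{R[X]}(R[X]\lotimes_R M)$, and exploiting that $\hh{R[X]}=k$ is a field. The only cosmetic difference is that the paper invokes the K\"unneth isomorphism directly over the dg algebra $R[X]$, while you unpack it into an extra base change along $R[X]\to k$ followed by K\"unneth for complexes of $k$-vector spaces; the two packagings are equivalent.
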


By each $n$ as in Claim~\ref{cl:factorization}, the $R$-module structure on $A_{n}$ extends, via $\varphi_{n}$, to that of a dg module over $R[X]$, so one has
\[
A_n \lotimes_{R} M  \simeq A_{n} \lotimes_{R[X]}  (R[X] \lotimes_{R}  M)
\]
as complexes of $A$-modules. Since $\hh{R[X]}=k$ is a field, the K\"unneth map
\[
 \hh{A_n} \otimes_{\hh{R[X]}} \hh{R[X] \lotimes_{R} M} \longrightarrow \hh{A_{n} \lotimes_{R[X]}  (R[X] \lotimes_{R}  M)}
\]
is an isomorphism. Combining the preceding two isomorphisms yields
\[
\Tor^{R}(A_{n},M) = \hh{A_{n} \lotimes_{R} M}  \cong      \hh{A_{n}} \otimes_{k} \Tor^{R}(k,M)\,.
\]
This completes the proof of Claim~\ref{cl:toriso}.

\medskip

\emph{Proof of Claim~\ref{cl:key}}: Assume to the contrary that $\Tor_{s+d+1}^R(k,M) \neq 0$. Fix $n\gg 0$ so that Claim~\ref{cl:toriso} applies. Then the $A$-module $\Tor^{R}_{s+d+1}(A_{n},M)$ contains $\HH 0{A_{n}}$ as a direct summand.  Claim~\ref{cl:almostzero}  implies that $\HH 0{A_{n}}$ is almost zero. Since $\HH 0{A_n}\cong A/(g_{1}^{1/p^n},...,g_{d}^{1/p^n})$, by construction, this  fact translates to
\[
\sqrt{pA} = (g_{1}^{1/p^\infty})A  \subseteq (g_{1}^{1/p^n},...,g_{d}^{1/p^n})\,.
\]
Raising to a sufficiently large power and observing that $(g_1^{1/p^\infty})$ is idempotent, this gives
\[
(g_{1}^{1/p^\infty})A \subseteq (g_{1},...,g_{r})A = (f_{1},...,f_{r})A.
\]
This completes the proof of Claim~\ref{cl:key} and so also that of the theorem.
\end{proof}

The next result gives a way to check the valuative condition~\ref{ss:valuative}.

\begin{proposition}
\label{prp:AFFSpa}
Let $R$ be a noetherian $p$-torsionfree ring containing $p$ in its Jacobson radical.  Let $R\to A$ be a map, with $A$ perfectoid,  that is almost flat and $0$ is the only $R$-module $M$ for which  $M\otimes_{R}A=0$. Then the following statements hold.
\begin{enumerate}[\quad\rm(1)]
\item
$R\to A$ satisfies the valuative condition~\ref{ss:valuative}.
\item
$\sqrt{pA}\not\subseteq \fm A$ for any maximal ideal $\fm$ of $R$.
\item
$0$ is the only $R$-module $M$ for which $M\otimes_{R}A$ is almost zero.
\end{enumerate}
\end{proposition}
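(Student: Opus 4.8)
The plan is to establish (1) first, deduce (2) from it, and then (3) from (2). Write $\varpi\in A$ for a pseudouniformizer as in Lemma~\ref{rmk:AltDefPerf}, so that $\varpi^{p}=pu$ for a unit $u\in A$ and $\varpi$ admits a compatible system $\{\varpi^{1/p^{n}}\}$ of $p$-power roots, all of which lie in $\sqrt{pA}$; note also that the hypothesis says $A\otimes_{R}N\ne 0$ for every nonzero $R$-module $N$, so in particular $\fm A\ne A$ for each maximal ideal $\fm$. For (1), fix a map $R\to V$ with $V$ a $p$-torsionfree $p$-adically complete rank $1$ valuation ring; we must produce an extension $V\to W$ of such rings and a compatible map $A\to W$ (equivalently, by Remark~\ref{rem:generic-points}, lift the corresponding generic point of $\spa(R[1/p],R)$). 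Let $\fp=\ker(R\to V)$; since $V$ is $p$-torsionfree we have $p\notin\fp$, so $p$ is invertible in $k(\fp)=\operatorname{Frac}(R/\fp)$, and applying the hypothesis to $M=k(\fp)$ gives $A\otimes_{R}k(\fp)\ne 0$. As $k(\fp)$ embeds into $K\colonequals\operatorname{Frac}(V)=V[1/p]$, we conclude $A\otimes_{R}K\cong(A\otimes_{R}k(\fp))\otimes_{k(\fp)}K\ne 0$; choosing a maximal ideal of this ring produces a field $L\supseteq K$ together with compatible maps $A\to L$ and $V\hookrightarrow K\hookrightarrow L$ over $R$. One now obtains $W$ by a standard valuation-theoretic construction: pick a valuation ring of $L$ that dominates $V$ and contains the image of $A$, coarsen its value group to the convex subgroup generated by the value of $p$ so as to make it rank $1$, and take the $p$-adic completion.

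For (2), fix a maximal ideal $\fm$ and assume, for contradiction, that $\sqrt{pA}\subseteq\fm A$; then $\varpi^{1/p^{n}}\in\fm A$ for all $n\ge 0$. As $R$ is noetherian and $p$-torsionfree, $p$ lies in no minimal prime of $R_{\fm}$, so we may choose a minimal prime $\fq$ of $R_{\fm}$ with $p\notin\fq$; then $R_{\fm}/\fq$ is a noetherian local domain in which $p$ is a nonzero nonunit, and hence there is a $p$-torsionfree $p$-adically complete rank $1$ valuation ring $V$ with a \emph{local} homomorphism $R_{\fm}\to V$ (dominate $R_{\fm}/\fq$ by a valuation ring of its fraction field, coarsen to rank one, and complete). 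Apply (1) to $R\to V$ to get $V\to W$ and a compatible $A\to W$. Now $V\to W$ is automatically injective and local: its kernel is a prime of $V$ which cannot be $\fm_{V}$ (since $p\in\fm_{V}$ while $p\ne 0$ in $W$), and $\fm_{W}\cap V$ is a prime containing $p$, hence equals $\fm_{V}$. Consequently $R\to W$ carries $\fm$ into $\fm_{W}$ but does not annihilate $\fm$ (its kernel is a prime strictly contained in $\fm$, again because $p\in\fm$ and $W$ is $p$-torsionfree). So $\fm W$ is a nonzero proper finitely generated ideal of the rank $1$ valuation ring $W$, whence $\fm W=\{x: v_{W}(x)\ge\gamma_{0}\}$ for some $\gamma_{0}$ with $0<\gamma_{0}<\infty$. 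But the image of $\varpi^{1/p^{n}}$ lies in $\fm W$, so $v_{W}(\varpi^{1/p^{n}})\ge\gamma_{0}$; since $(\varpi^{1/p^{n}})^{p^{n+1}}=pu$ with $u$ a unit, this forces $v_{W}(p)=p^{n+1}v_{W}(\varpi^{1/p^{n}})\ge p^{n+1}\gamma_{0}$ for every $n$, contradicting finiteness of $v_{W}(p)$ (valid as $W$ has rank $1$). Hence $\sqrt{pA}\not\subseteq\fm A$.

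For (3), suppose $M\otimes_{R}A$ is almost zero but $M\ne 0$. Pick $0\ne x\in M$ and set $I=\operatorname{Ann}_{R}(x)\subsetneq R$, so that $R/I\cong Rx$ is a submodule of $M$. In the exact sequence
\[
\Tor^{R}_{1}(A,M/Rx)\longrightarrow A\otimes_{R}(R/I)\longrightarrow A\otimes_{R}M\longrightarrow A\otimes_{R}(M/Rx)\longrightarrow 0,
\]
the first term is almost zero by almost flatness and the third is almost zero by hypothesis, so $A/IA=A\otimes_{R}(R/I)$ is an extension of almost zero modules, hence almost zero (here one uses that $\sqrt{pA}$ is idempotent). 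Therefore $\sqrt{pA}\subseteq IA$, and picking a maximal ideal $\fm\supseteq I$ gives $\sqrt{pA}\subseteq IA\subseteq\fm A$, contradicting (2); so $M=0$.

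The step I expect to be the main obstacle is the valuation-theoretic heart of (1): converting the abstract nonvanishing $A\otimes_{R}k(\fp)\ne 0$ into a genuine \emph{extension} $V\to W$ of $p$-adically complete rank $1$ valuation rings through which $A$ also maps, i.e.\ checking that the fiber of $\spa(A[1/p],A)\to\spa(R[1/p],R)$ over the given generic point is nonempty. Once (1) is available, parts (2) and (3) follow formally as above.
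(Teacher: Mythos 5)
Your plan reverses the paper's logical order: you argue $(1)\Rightarrow(2)\Rightarrow(3)$, whereas the paper proves $(2)$ first (from scratch, via solid closure), then $(3)$ from $(2)$ by essentially the same Tor-sequence argument you give, and finally $(1)$ from $(3)$. Your deductions $(1)\Rightarrow(2)$ and $(2)\Rightarrow(3)$ are correct and cleanly written, and the $(1)\Rightarrow(2)$ step is in fact a nicer route to $(2)$ than the paper's solid-closure argument --- \emph{if} $(1)$ were available. But the gap you flag at the end is real and is exactly where the content lies, so the proof as written does not go through.

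Concretely: from $A\otimes_{R}k(\fp)\neq0$ you conclude $A\otimes_{R}K\neq0$ and then assert one can ``pick a valuation ring of $L$ that dominates $V$ and contains the image of $A$.'' This is not a standard extension-of-valuations step. Such a valuation ring exists if and only if $\fm_V$ generates a proper ideal in the subring of $L$ generated by $V$ and the image of $A$, and nothing you have established rules out $\fm_V\cdot V[\mathrm{im}(A)]=V[\mathrm{im}(A)]$. In the language of Remark~\ref{rem:generic-points}: non-emptiness of the fiber of $\Spec(A[1/p])\to\Spec(R[1/p])$ over the kernel of $K$ does not imply non-emptiness of the fiber of $\spa(A[1/p],A)\to\spa(R[1/p],R)$ over the point given by $V$, because the adic spectrum remembers the integral structure $A$. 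Equivalently, what you have shown is $(A\otimes_{R}V)[1/p]\neq 0$; what is needed is $\widehat{A\otimes_{R}V}[1/p]\neq 0$, and $p$-adic completion can kill a ring even when the ring is nonzero after inverting $p$ (e.g.\ if $A\otimes_R V$ has enough $p$-divisibility). This is precisely the gap the paper closes using the Banach open mapping theorem applied to $\widehat{A\otimes_R V}$.

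A further warning sign: your proof of $(1)$ uses only the faithfulness hypothesis ``$M\otimes_{R}A=0\Rightarrow M=0$'' and makes no use of almost flatness. The paper's proof of $(1)$ crucially uses almost flatness, routed through $(3)$ applied to the maps $R/p^{n}\to A/p^{n}$; in the contradiction argument it is the almost-faithfulness that forces $p^{m}V/p^{m+1}V=0$. If the valuative condition really followed from faithfulness alone, the paper's hypotheses would be wastefully strong; the fact that they are not is a reliable signal that the step you flagged cannot be completed by a formal valuation-theoretic maneuver, and genuinely requires an input like Banach open mapping plus almost flatness.
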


\begin{proof}
We proved that (2) follows from (1) as part of the proof of Theorem~\ref{thm:almostpadicKunz}. Here we establish (2) first, and then deduce (1) and (3) from it.

(2) This part does not use the hypothesis that $R\to A$ is almost flat. Let $\fp$ be a minimal prime of $\widehat{R_{\fm}}$, the $\fm$-adic completion of $R_{\fm}$ and set $S:=\widehat{R_{\fm}}/\fp$.
For $d=\dim S$ one gets that
\[
 \lch{\fm}d{S\otimes_RA}\cong \lch{\fm}d S \otimes_RA\neq 0\,;
\]
here we have used the hypothesis that $(-)\otimes_{R}A$ is faithful on modules. Therefore $S\otimes_R A$ is a solid $S$-algebra in the sense of Hochster; see~\cite[Corollary 2.4]{Hochster:1994a}.

Contrary to the desired result, suppose $\sqrt{pA}\subseteq \fm A$,  and consider the element $\varpi$ from Definition~\ref{def:perfectoid}.
Since $\varpi^{p}=pu$, for some unit $u$ in $A$, one has that $\sqrt{\varpi A}\subseteq \fm A$. Therefore, for each $e\ge 0$  the element $\varpi^{1/p^e}$ is in $\fm (S\otimes_{R}A)$ and hence  $\varpi\in \fm^{p^e}(S\otimes_{R}A)$; this implies that $p$ is the same ideal. By definition~\cite[(1.2)]{Hochster:1994a}, this implies that $p$ is contained in the solid closure of $\fm^{p^e}S$, and hence  in the integral closure of $\fm^{p^{e}}$, by \cite[Theorem 5.10]{Hochster:1994a}. This is a contradiction for $p$ is not zero in $S$; see \cite[Proposition~5.3.4]{Huneke/Swanson:2006a}.

\medskip

(3) Suppose $M\ne 0$, so there is an embedding $R/I\subseteq M$ for some nonunit ideal $I$ of $R$. Since $R\to A$ is almost flat, when $M\otimes_{R}A$ is almost zero, so is $(R/I)\otimes_{R}A$, that is to say, $(p^{1/p^\infty})A \subseteq IA$; this contradicts (2). Note that (2) is the special case $M=R/\fm$ of (3).

\medskip

(1) At this point we  can assume $R\to A$ is almost flat and that $0$ is the only $R$-module $M$ for which $M\otimes_{R}A$ almost zero. These hypotheses remain unchanged, and it suffices to verify the conclusion, when we replace $A$ by any almost isomorphic ring, and by doing so we can assume that $A$ is also $p$-torsion free. Then, for any integer $n\ge 1$ the map $R/p^{n}\to A/p^{n}$ is almost flat, and has the property that $M\otimes_{R/p^{n}}(A/p^{n})$ almost zero implies $M=0$. These observations will be used below.

 Fix a map $R \to V$ with $V$ a $p$-adically complete and $p$-torsionfree rank $1$ valuation ring. Set $B := A \otimes_R V$ and let $\widehat{B}$ denote the $p$-adic completion of $B$. It will be enough to show that $\widehat{B}[1/p] \neq 0$; then, for  any prime $\mathfrak{q}$ in $\widehat{B}$ not containing $p$, there exists a $p$-adic  rank $1$ valuation on the domain $\widehat{B}/\mathfrak{q}$,  for each maximal of this quotient contains $p$, and any such valuation extends $V$; see also~\cite[Proposition~3.6]{Huber:1993a}.

Assume towards contradiction that $\widehat{B}[1/p] = 0$. Then the Banach open mapping theorem shows that for some $m\ge 0$ one has $p^m \cdot \widehat{B} = 0$, that is to say, $\widehat{B} \simeq \widehat{B}/p^m$; see \cite{Bhatt:2018a}. Since $\widehat{B}/p^m \simeq B/p^m$ and the transition maps in the tower $\{B/p^n\}$ limiting to $\widehat{B}$ are surjective, it follows that $B/p^{m+1} \simeq B/p^m$ via the natural map. In other words, the surjective map $V/p^{m+1} \to V/p^m$ becomes an isomorphism after applying $- \otimes_{R/p^n} A/p^n$ for $n \geq m+1$.  It then follows that $p^m V/p^{m+1} V \otimes_{R/p^n} A/p^n$ is almost zero and hence that $p^m V/p^{m+1} V = 0$, which is absurd as $V$ is $p$-torsionfree and $p$-adically complete.
\end{proof}

Here is a more intuitive formulation of the $p$-adic Kunz theorem in the almost setting.

\begin{corollary}
\label{cor:almostpadicKunz}
Let $R$ be a noetherian $p$-torsionfree ring containing $p$ in its Jacobson radical.  If there exists a map $R \to A$ with $A$ perfectoid that is almost flat and zero is the only $R$-module $M$ for which $M\otimes_{R}A$ is zero, then $R$ is regular.
\end{corollary}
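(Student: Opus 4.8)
The plan is to deduce this directly from Theorem~\ref{thm:almostpadicKunz}, once the valuative condition~\ref{ss:valuative} has been verified, and the verification is precisely what Proposition~\ref{prp:AFFSpa}~(1) supplies. So the whole proof is a formal splicing of two earlier results; I will not need any new construction.

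First I would observe that the hypotheses imposed here on $R$ and on the map $R\to A$ are word-for-word the hypotheses of Proposition~\ref{prp:AFFSpa}: the ring $R$ is noetherian and $p$-torsionfree with $p$ in its Jacobson radical, $A$ is perfectoid, the map $R\to A$ is almost flat, and $0$ is the only $R$-module $M$ with $M\otimes_RA=0$. Hence Proposition~\ref{prp:AFFSpa}~(1) applies verbatim and yields that $R\to A$ satisfies the valuative condition~\ref{ss:valuative}.

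Next I would feed this into Theorem~\ref{thm:almostpadicKunz}. That theorem requires exactly: $R$ noetherian and $p$-torsionfree with $p$ in its Jacobson radical; $A$ perfectoid; the valuative condition~\ref{ss:valuative} for $R\to A$; and $R\to A$ almost flat. All four are now available — the third from the previous step and the others from the standing hypothesis — so Theorem~\ref{thm:almostpadicKunz} delivers that $R$ is regular, which is the assertion.

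I expect no genuine obstacle at the level of this corollary: all the substance lives upstream, in the Banach open mapping theorem argument showing $\widehat{B}[1/p]\neq 0$ inside the proof of Proposition~\ref{prp:AFFSpa}~(1), and in the dg-algebra / Koszul-colimit argument behind Theorem~\ref{thm:almostperfectoid} that Theorem~\ref{thm:almostpadicKunz} localizes and invokes. One minor point worth noting is that the corollary phrases faithfulness as ``$M\otimes_RA=0$ implies $M=0$'', whereas the natural condition for the almost machinery is the a priori weaker ``$M\otimes_RA$ almost zero implies $M=0$''; these coincide here by Proposition~\ref{prp:AFFSpa}~(3), but that equivalence is not even needed for the deduction, since Proposition~\ref{prp:AFFSpa}~(1) is already stated under the same hypothesis as the corollary.
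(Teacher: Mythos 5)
Your proposal is exactly the paper's proof: verify the valuative condition via Proposition~\ref{prp:AFFSpa}(1) and then invoke Theorem~\ref{thm:almostpadicKunz}. Your additional remarks about where the real work lives and about the two faithfulness conditions are accurate but not needed for the deduction.
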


\begin{proof}
This is a direct consequence of Proposition~\ref{prp:AFFSpa}(1) and Theorem~\ref{thm:almostpadicKunz}.
\end{proof}

\begin{bibdiv}
\begin{biblist}

\bib{Aberbach/Li:2008a}{article}{
   author={Aberbach, Ian M.},
   author={Li, Jinjia},
   title={Asymptotic vanishing conditions which force regularity in local
   rings of prime characteristic},
   journal={Math. Res. Lett.},
   volume={15},
   date={2008},
   number={4},
   pages={815--820},
   issn={1073-2780},
   review={\MR{2424915}},
}

\bib{Aberbach/Hochster:1997a}{article}{
   author={Aberbach, Ian M.},
   author={Hochster, Melvin},
   title={Finite Tor dimension and failure of coherence in absolute integral
   closures},
   journal={J. Pure Appl. Algebra},
   volume={122},
   date={1997},
   number={3},
   pages={171--184},
   issn={0022-4049},
   review={\MR{1481086}},
}
%

\bib{Andre:2018a}{article}{
    author={Andr\'e, Yves},
    title={Perfectoid spaces and the homological conjectures},
    status={preprint 2018},
    eprint ={arxiv:1801.10006v1},
}

\bib{Andre:2016a}{article}{
    author={Andr\'e, Yves},
   title={Le lemme d'Abhyankar perfectoide},
   language={French},
   journal={Publ. Math. Inst. Hautes \'Etudes Sci.},
   volume={127},
   date={2018},
   pages={1--70},
   issn={0073-8301},
   review={\MR{3814650}},
   doi={10.1007/s10240-017-0096-x},
}

\bib{Avramov:1998a}{article}{
   author={Avramov, Luchezar L.},
   title={Infinite free resolutions},
   conference={
      title={Six lectures on commutative algebra},
      address={Bellaterra},
      date={1996},
   },
   book={
      series={Progr. Math.},
      volume={166},
      publisher={Birkh\"auser, Basel},
   },
   date={1998},
   pages={1--118},
   review={\MR{1648664}},
}
\bib{Avramov/Iyengar/Miller:2006a}{article}{
   author={Avramov, Luchezar L.},
   author={Iyengar, Srikanth},
   author={Miller, Claudia},
   title={Homology over local homomorphisms},
   journal={Amer. J. Math.},
   volume={128},
   date={2006},
   number={1},
   pages={23--90},
   issn={0002-9327},
   review={\MR{2197067}},
}

\bib{Bass/Murthy:1967a}{article}{
   author={Bass, Hyman},
   author={Murthy, M. Pavaman},
   title={Grothendieck groups and Picard groups of abelian group rings},
   journal={Ann. of Math. (2)},
   volume={86},
   date={1967},
   pages={16--73},
   issn={0003-486X},
   review={\MR{0219592}},
}

\bib{Bhatt:2018a}{article}{
    author={Bhatt, Bhargav},
    title={Torsion  completions are bounded},
    journal={J. Pure Appl. Algebra},
    status={to appear},
}

\bib{Bhatt:2016a}{article}{
    author={Bhatt, Bhargav},
   title={On the direct summand conjecture and its derived variant},
   journal={Invent. Math.},
   volume={212},
   date={2018},
   number={2},
   pages={297--317},
   issn={0020-9910},
   review={\MR{3787829}},
   doi={10.1007/s00222-017-0768-7},
}

\bib{Bhatt/Morrow/Scholze:2016a}{article}{
 author={Bhatt, Bhargav},
 author={Morrow, Matthew},
 author={Scholze, Peter},
 title={Integral $p$-adic Hodge theory},
 status={preprint 2016},
 eprint={arxiv:1602.03148v1},
}

\bib{Bhatt/Morrow/Scholze:2018}{article}{
 author={Bhatt, Bhargav},
 author={Morrow, Matthew},
 author={Scholze, Peter},
 title={Topological Hochschild homology and integral $p$-adic Hodge theory},
 status={preprint 2018},
 eprint={arxiv:1802.03261},
}

\bib{Bhatt/Scholze:2017a}{article}{
   author={Bhatt, Bhargav},
   author={Scholze, Peter},
   title={Projectivity of the Witt vector affine Grassmannian},
   journal={Invent. Math.},
   volume={209},
   date={2017},
   number={2},
   pages={329--423},
   issn={0020-9910},
   review={\MR{3674218}},
}

\bib{Bourbaki:CA9}{book}{
   author={Bourbaki, N.},
   title={\'El\'ements de math\'ematique. Alg\`ebre commutative. Chapitres 8 et 9},
   language={French},
   note={Reprint of the 1983 original},
   publisher={Springer, Berlin},
   date={2006},
   pages={ii+200},
   isbn={978-3-540-33942-7},
   isbn={3-540-33942-6},
   review={\MR{2284892}}, }

\bib{Bruns/Herzog:1998a}{book}{
   author={Bruns, Winfried},
   author={Herzog, J{\"u}rgen},
   title={Cohen-Macaulay rings},
   series={Cambridge Studies in Advanced Mathematics},
   volume={39},
   edition={2},
   publisher={Cambridge University Press, Cambridge},
   date={1998},
   pages={xii+403},
   isbn={0-521-41068-1},
   review={\MR{1251956}},
}

\bib{Christensen/Iyengar/Marley:2018a}{article}{
    author={Christensen, Lars Winther},
    author={Iyengar, Srikanth B.},
    author={Marley, Thomas},
    title={Rigidity of Ext and Tor with coefficients in residue fields of a commutative noetherian ring},
    journal={Proc. Edinb. Math. Soc. (2)},
    status={to appear},
    eprint ={ arxiv:1611.03280},
   }

\bib{Conrad:2014a}{webpage}{
author={Conrad, Brian},
url= {http://virtualmath1.stanford.edu/~conrad/Perfseminar/Notes/L10.pdf},
}

\bib{Greenlees/May:1992a}{article}{
author={Greenlees, J. P. C.},
   author={May, J. P.},
   title={Derived functors of $I$-adic completion and local homology},
   journal={J. Algebra},
   volume={149},
   date={1992},
   number={2},
   pages={438--453},
   issn={0021-8693},
   review={\MR{1172439}},
}

\bib{Cuong/Cuong:2017a}{article}{
   author={Cuong, Nguyen Tu},
   author={Cuong, Doan Trung},
   title={Local cohomology annihilators and Macaulayfication},
   journal={Acta Math. Vietnam.},
   volume={42},
   date={2017},
   number={1},
   pages={37--60},
   issn={0251-4184},
   review={\MR{3595445}},
}

\bib{Gabber/Ramero:2017}{book}{
  author={Gabber, Ofer},
  author={Ramero, Lorenzo},
  title={Foundations of almost ring theory},
  date={2017},
  status={preprint},
  eprint={arxiv:0409584v12},
  }

\bib{Fontaine:2013}{article}{
 author={Fontaine, Jean Marc},
 title={Perfect\"{o}ides, presque puret\'e et monodromie-poids (d'apr\'es Peter Scholze)},
 journal={Ast\'erisque},
 volume={352},
 year={2013},
 pages={1043-1058},
  }

\bib{Heitmann:2005a}{article}{
   author={Heitmann, Raymond C.},
   title={Extended plus closure and colon-capturing},
   journal={J. Algebra},
   volume={293},
   date={2005},
   number={2},
   pages={407--426},
   issn={0021-8693},
   review={\MR{2172347}},
}

\bib{Heitmann:2002a}{article}{
   author={Heitmann, Raymond C.},
   title={The direct summand conjecture in dimension three},
   journal={Ann. of Math. (2)},
   volume={156},
   date={2002},
   number={2},
   pages={695--712},
   issn={0003-486X},
   review={\MR{1933722}},
}

\bib{Herzog:1974a}{article}{
   author={Herzog, J\"urgen},
   title={Ringe der Charakteristik $p$ und Frobeniusfunktoren},
   language={German},
   journal={Math. Z.},
   volume={140},
   date={1974},
   pages={67--78},
   issn={0025-5874},
   review={\MR{0352081}},
}

\bib{Hochster:1994a}{article}{
   author={Hochster, Melvin},
   title={Solid closure},
   conference={
      title={Commutative algebra: syzygies, multiplicities, and birational
      algebra},
      address={South Hadley, MA},
      date={1992},
   },
   book={
      series={Contemp. Math.},
      volume={159},
      publisher={Amer. Math. Soc., Providence, RI},
   },
   date={1994},
   pages={103--172},
   review={\MR{1266182}},
   doi={10.1090/conm/159/01508},
}

\bib{Hochster/Huneke:1992a}{article}{
   author={Hochster, Melvin},
   author={Huneke, Craig},
   title={Infinite integral extensions and big Cohen-Macaulay algebras},
   journal={Ann. of Math. (2)},
   volume={135},
   date={1992},
   number={1},
   pages={53--89},
   issn={0003-486X},
   review={\MR{1147957}},
}

\bib{Huber:1993a}{article}{
   author={Huber, R.},
   title={Continuous valuations},
   journal={Math. Z.},
   volume={212},
   date={1993},
   number={3},
   pages={455--477},
   issn={0025-5874},
   review={\MR{1207303}},
   doi={10.1007/BF02571668},
}

\bib{Huneke:1996a}{article}{
   author={Huneke, Craig},
   title={Tight closure, parameter ideals, and geometry},
   conference={
      title={Six lectures on commutative algebra},
      address={Bellaterra},
      date={1996},
   },
   book={
      series={Progr. Math.},
      volume={166},
      publisher={Birkh\"auser, Basel},
   },
   date={1998},
   pages={187--239},
   review={\MR{1648666}},
}

\bib{Huneke/Lyubeznik:2007a}{article}{
   author={Huneke, Craig},
   author={Lyubeznik, Gennady},
   title={Absolute integral closure in positive characteristic},
   journal={Adv. Math.},
   volume={210},
   date={2007},
   number={2},
   pages={498--504},
   issn={0001-8708},
   review={\MR{2303230}},
}

\bib{Huneke/Swanson:2006a}{book}{
   author={Huneke, Craig},
   author={Swanson, Irena},
   title={Integral closure of ideals, rings, and modules},
   series={London Mathematical Society Lecture Note Series},
   volume={336},
   publisher={Cambridge University Press, Cambridge},
   date={2006},
   pages={xiv+431},
   isbn={978-0-521-68860-4},
   isbn={0-521-68860-4},
   review={\MR{2266432}},
}

\bib{Kawasaki:2002a}{article}{
   author={Kawasaki, Takesi},
   title={On arithmetic Macaulayfication of Noetherian rings},
   journal={Trans. Amer. Math. Soc.},
   volume={354},
   date={2002},
   number={1},
   pages={123--149},
   issn={0002-9947},
   review={\MR{1859029}},
}

\bib{Kedlaya/Liu:2015}{article}{
  author={Kedlaya, Kiran},
  author={Liu, Ruochuan},
  title={Relative $p$-adic Hodge theory},
  journal={Ast\'erisque},
  volume={371},
  year={2015},
  pages={111-165},
  }

\bib{Kunz:1969a}{article}{
   author={Kunz, Ernst},
   title={Characterizations of regular local rings for characteristic $p$},
   journal={Amer. J. Math.},
   volume={91},
   date={1969},
   pages={772--784},
   issn={0002-9327},
   review={\MR{0252389}},
}

\bib{Lau:2016a}{article}{
 author={Lau, Eike},
 title={Dieudonn\'e theory over semiperfect rings and perfectoid rings},
 status={preprint 2016},
 eprint={arXiv:1603.07831},
 }

\bib{Morrow:2016}{article}{
  author={Morrow, Matthew},
  title={Notes on the $A_{\inf}$-cohomology of integral $p$-adic Hodge theory},
  status={preprint 2016},
  eprint={arxiv:1608.00922},
  }

\bib{Nagata:1975a}{book}{
   author={Nagata, Masayoshi},
   title={Local rings},
   note={Corrected reprint},
   publisher={Robert E. Krieger Publishing Co., Huntington, N.Y.},
   date={1975},
   pages={xiii+234},
   isbn={0-88275-228-6},
   review={\MR{0460307}},
}
\bib{Schenzel:2003a}{article}{
   author={Schenzel, Peter},
   title={Proregular sequences, local cohomology, and completion},
   journal={Math. Scand.},
   volume={92},
   date={2003},
   number={2},
   pages={161--180},
   issn={0025-5521},
   review={\MR{1973941}},
}

\bib{Scholze:2017}{article}{
 author={Scholze, Peter},
 title={\'Etale cohomology of diamonds},
 status={preprint 2017},
 eprint={arXiv:1709.07343 },
 }

\bib{Shimomoto:2014}{article}{
  author={Shimomoto, Kazuma},
   title={An application of the almost purity theorem to the homological
   conjectures},
   journal={J. Pure Appl. Algebra},
   volume={220},
   date={2016},
   number={2},
   pages={621--632},
   issn={0022-4049},
   review={\MR{3399381}}, }

\end{biblist}
\end{bibdiv}

\end{document}